\documentclass{amsart}
 \usepackage[foot]{amsaddr}

\usepackage{amsmath}
\usepackage{amssymb}
\usepackage{mathtools}
\usepackage{cite}
\usepackage[alphabetic]{amsrefs}
\usepackage{stmaryrd}
\usepackage[mathscr]{euscript}
\usepackage{amsthm}
\usepackage{enumerate}
\usepackage{hyperref}
\usepackage{caption}

\newtheorem{thm}{Theorem}[section]
\newtheorem{cor}{Corollary}[section]
\newtheorem{lem}{Lemma}[section]
\newtheorem{prop}{Proposition}[section]
\newtheorem{clm}{Claim}[section]
\newtheorem{defn}{Definition}[section]
\newtheorem{rem}{Remark}[section]

\newcommand{\comment}[1]{}

\newcommand{\B}{\mathbb{B}}
\newcommand{\R}{\mathbb{R}}

\newcommand{\C}{\mathbb{C}}

\newcommand{\clo}{\overline}
\newcommand{\on}{\colon}

\newcommand{\inv}[1]{{#1}^{-1}}

\newcommand*{\defeq}{\mathrel{\vcenter{\baselineskip0.5ex \lineskiplimit0pt
                     \hbox{\scriptsize.}\hbox{\scriptsize.}}}%
                     =}

\makeatletter
\@namedef{subjclassname@2020}{\textup{2020} Mathematics Subject Classification}
\makeatother               


\title{Stability of Convex Disks}
\author{Hunter Stufflebeam}
\address{The University of Pennsylvania, Department of Mathematics, David Rittenhouse Lab., 209 South 33rd Street, Philadelphia, PA 19104, USA.}
\email{hstuff(at)sas(dot)upenn(dot)edu}

\begin{document}

\begin{abstract}
We prove that topological disks with positive curvature and strictly convex boundary of large length are close to round spherical caps of constant boundary curvature in the Gromov-Hausdorff sense. This proves stability for a theorem of F.\ Hang and X.\ Wang in \cite{HW}, and can be viewed as an affirmative answer to a convex stability version of the Min-Oo Conjecture in dimension two. As an intermediate step, we obtain a compactness result for a Liouville-type PDE problem.
\end{abstract}
\subjclass[2020]{53C21,58J05, 35B35}

\maketitle


\section{Introduction}

Inequalities in geometric analysis, such as the isoperimetric and systolic, Faber-Krahn and Penrose, relate given geometric objects to understood model cases, taking as input data bounds on curvatures, volumes, eigenvalues, etc. Via such relationships, much work has been done to understand the structure of spaces with natural geometric conditions phrased in terms of such quantities. 

Given an inequality for which one has some understanding of extremizers (the geometric objects which realize equality), one might ask if an object \emph{nearly} realizing equality must somehow share characteristics with the extremizer(s). The first problem of understanding the extremizers might be called a \emph{rigidity problem}. The second problem of understanding near extremizers might be called a \emph{stability problem}. A classical example is the isoperimetric inequality in Euclidean space. Its extremizers are balls, and the stability problem has received much attention--for example consider the work of Fusco-Maggi-Pratelli in \cite{FMP}. 

The aim of this paper is to establish the following stability theorem for convex disks. Throughout, given a 2-manifold $(M,g)$ we use the notation $K_g$ for the Gaussian curvature, $\kappa_g$ for the geodesic curvature of the boundary, and $L_{g}$ for the length functional of $g$. The Gromov-Hausdorff metric on the space of compact metric spaces is denoted by $d_{GH}$.

\begin{thm}[Stability of the Convex Spherical Cap]\label{main}
Fix $c>0$ and let $\delta>0$. Then there exists an $\varepsilon=\varepsilon(c,\delta)>0$ such that if $(M, g)$ is a two dimensional manifold with $K_{g}\geqslant 1$, $\kappa_{g}\geqslant c>0$, and $L_{g}(\partial M)\geqslant 2\pi(1-\varepsilon)/\sqrt{1+c^2}$, then \[d_{GH}((M,g),\clo{\mathbf{B}}_{\inv\cot(c)})<\delta,\] where $\clo{\mathbf{B}}_{\inv\cot(c)}$ is a closed geodesic disk of radius $\inv\cot(c)$ in the round sphere $\mathbb S^2$.
\end{thm}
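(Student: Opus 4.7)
The strategy is proof by contradiction combined with compactness and the Hang--Wang rigidity. Suppose the conclusion fails: fix $c, \delta_0 > 0$ and a sequence of topological disks $(M_i, g_i)$ with $K_{g_i} \geq 1$, $\kappa_{g_i} \geq c$, and $L_{g_i}(\partial M_i) \to 2\pi/\sqrt{1+c^2}$, but
\[
d_{GH}\!\left((M_i, g_i),\, \clo{\mathbf{B}}_{\inv\cot(c)}\right) \geq \delta_0
\]
for all $i$. The plan is to extract a subsequence converging to the spherical cap, which contradicts this lower bound.

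First I would collect a priori geometric bounds. Gauss--Bonnet for a disk gives
\[
\mathrm{Area}(M_i, g_i) + c\, L_{g_i}(\partial M_i) \leq \int_{M_i} K_{g_i}\, dA_{g_i} + \int_{\partial M_i} \kappa_{g_i}\, ds_{g_i} = 2\pi,
\]
so areas and boundary lengths are uniformly bounded. Together with $K_{g_i} \geq 1$, a doubling argument (the reflected double across the convex boundary is an Alexandrov space with curvature $\geq 1$, hence has diameter $\leq \pi$) produces a uniform diameter bound. In particular the $(M_i, g_i)$ are GH-precompact, and any subsequential limit $(X_\infty, d_\infty)$ is an Alexandrov space with curvature $\geq 1$ and convex boundary.

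To get a limit regular enough to invoke Hang--Wang, I would pass to a conformal gauge. Each $M_i$ is topologically a disk, so uniformize to write $g_i = e^{2u_i} g_{\mathrm{ref}}$ on the flat unit disk $\mathbb{D}$. The conformal factor $u_i$ then satisfies a Liouville-type boundary value problem
\[
-\Delta u_i = K_{g_i}\, e^{2u_i} \ \text{in } \mathbb{D}, \qquad \partial_\nu u_i + 1 = \kappa_{g_i}\, e^{u_i} \ \text{on } \partial \mathbb{D},
\]
with data satisfying $K_{g_i} \geq 1$, $\kappa_{g_i} \geq c$, and $\int_{\partial \mathbb{D}} e^{u_i}\, ds = L_{g_i}(\partial M_i) \to 2\pi/\sqrt{1+c^2}$. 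The main obstacle is a PDE compactness result for this family: after modding out the conformal automorphism group $\mathrm{Aut}(\mathbb{D})$ (for instance by normalizing three boundary points), one must exclude both interior and boundary blow-up for the $u_i$. This is the ``compactness result for a Liouville-type PDE problem'' promised in the abstract; the strict convexity hypothesis $\kappa_g \geq c > 0$ is precisely what one expects to preclude boundary bubbles, while the Gauss--Bonnet identity controls the total mass $\int K\, dA$ and so constrains interior concentration.

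Granted such compactness, $u_i \to u_\infty$ smoothly (after normalization), and the limit metric $g_\infty = e^{2u_\infty} g_{\mathrm{ref}}$ is smooth with $K_{g_\infty} \geq 1$, $\kappa_{g_\infty} \geq c$, and $L_{g_\infty}(\partial \mathbb{D}) = 2\pi/\sqrt{1+c^2}$. The equality case of Hang--Wang then forces $g_\infty$ to be isometric to $\clo{\mathbf{B}}_{\inv\cot(c)}$, and smooth convergence of the metrics upgrades to GH convergence, contradicting $d_{GH} \geq \delta_0$. The analytic heart of the argument is thus concentrated entirely in the Liouville compactness step.
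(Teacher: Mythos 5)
Your high-level contradiction strategy matches the paper's, but there is a serious gap in the central analytic step, and it is precisely the point the paper is organized around avoiding.

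You write ``Granted such compactness, $u_i \to u_\infty$ smoothly.'' This is exactly what is \emph{not} available. The only control on the potentials $K_{g_i}$ is the pointwise lower bound $K_{g_i}\geq 1$ together with the $L^1$ bound $\int K_{g_i}\,e^{2u_i} \leq 2\pi$ from Gauss--Bonnet; there is no $L^p$ bound for any $p>1$, nor any modulus of continuity. Brezis--Merle-type blow-up theory (which you are implicitly invoking when you speak of ``excluding interior and boundary blow-up'') requires $K\in L^p$ for $p>1$; the paper says so explicitly in its remark on Brezis--Merle. With only $L^1$ curvature data, there is no elliptic bootstrapping to $C^{1,\alpha}$, no smooth subconvergence, and no smooth limit metric on which to invoke the rigidity case of Hang--Wang. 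The limit can a priori be merely a metric space (an Alexandrov space), not a smooth Riemannian disk, and identifying that limit is the entire difficulty.

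The paper circumvents this by never attempting smooth compactness of the $u_k$ themselves. Instead it constructs, following Hang--Wang's sub/supersolution device, a constant-curvature comparison factor $v_k$ solving $-\Delta v_k=e^{2v_k}$ with $v_k=u_k$ on $\partial\mathbb{D}$ and $v_k\leq u_k$. The $v_k$ \emph{do} have clean compactness (Carath\'eodory convergence of conformal maps onto nearly-round spherical domains, plus Kellogg--Warschawski regularity) once the gauge is fixed by a quantitative inradius estimate. The difference $w_k=u_k-v_k$ is then controlled via the Brezis--Merle exponential-integrability lemma---the input being only $\|\Delta w_k\|_{L^1}\to 0$, which in turn comes from the Gauss--Bonnet balance and the inradius estimate---yielding $e^{\lambda w_k}\to 1$ in $W^{1,p}$ for $p<2$. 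This gives only $W^{1,p}_{loc}$ ($p<2$) convergence of $e^{u_k}$ to $e^{\rho_c}$, far short of smooth convergence. The remainder of the paper is devoted to upgrading this weak Sobolev convergence of conformal factors to Gromov--Hausdorff convergence of the disks: one shows the distance functions $d_{u_k}$ subconverge in $C^0_{loc}$ to a semi-metric $d_\infty$, identifies $d_\infty=d_{\rho_c}$ using a Sobolev trace inequality along geodesics and a Bishop--Gromov comparison adapted to convex boundary, and then patches the local identifications into a global isometry using Perelman stability, an Arzel\`a--Ascoli diagonal argument, and volume convergence. None of this would be necessary if smooth compactness of $u_i$ held. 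So the gap in your argument is not a technical loose end but the substance of the theorem; you have replaced the hard part with an unproved (and, in this generality, false) claim.

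Two smaller points: your doubling argument for the diameter bound is fine in spirit, but the paper does not need it because Gromov precompactness and Perelman stability are invoked directly on the disks as Alexandrov spaces with boundary. And your plan to normalize three boundary points to kill $\mathrm{Aut}(\mathbb{D})$ is a reasonable instinct, but the paper's gauge-fixing is done instead through the geometry of the comparison spherical domains (centering the outball at the south pole and using Carath\'eodory's theorem), which is what makes the estimates quantitative.
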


This result was partially motivated by the desire for a stability theorem corresponding to an old result of V.\ Toponogov:

\begin{thm}[V.\ Toponogov]\label{top}
Let $(M, g)$ be a closed surface with $K_g\geqslant 1$. Then any simple closed geodesic has length bounded from above by $2\pi$, and this length is attained if and only if $(M,g)$ is isometric to the round sphere. 
\end{thm}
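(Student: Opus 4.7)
The plan is to combine a Jacobi-field analysis along the normal exponential map to $\gamma$ with Gauss--Bonnet. Since $K_g\geq 1$, Gauss--Bonnet gives $2\pi\chi(M)=\int_M K_g\,dA>0$, so $\chi(M)>0$ and $M$ is diffeomorphic to $S^2$ or $\mathbb{RP}^2$. On $\mathbb{RP}^2$, a separating simple closed geodesic would cut off a Möbius strip on which Gauss--Bonnet forces $\int K_g\,dA=0$, contradicting $K_g\geq 1$; hence on $\mathbb{RP}^2$ the curve $\gamma$ is non-separating and lifts to a simple closed geodesic of length $2L_g(\gamma)$ on the orientation double cover $S^2$. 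It therefore suffices to treat $M=S^2$, where $\gamma$ separates $M$ into two topological disks $D_\pm$.

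Fix one side $D=D_+$ with inward unit normal $\nu$, and consider the normal flow $\phi_t(s)=\exp_{\gamma(s)}(t\nu(s))$ with Jacobian $b(s,t)$ defined by $\partial_s\phi_t=b(s,t)\,E(s,t)$ for a unit field $E$. Since $\gamma$ is a unit-speed geodesic one has $b(s,0)=1$ and $b_t(s,0)=0$, and the Jacobi equation gives $b_{tt}+K_g\,b=0$. Setting $L(t)\defeq\int_0^{L_g(\gamma)}b(s,t)\,ds$ and integrating in $t$ yields
\[L'(t)=-\int_{T(t)}K_g\,dA\leq 0,\qquad L''(t)+L(t)=\int_0^{L_g(\gamma)}(1-K_g)\,b\,ds\leq 0,\]
where $T(t)$ denotes the tube of width $t$ about $\gamma$; the second inequality is Fiala's. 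The crux of the argument is the monotonicity of $\Phi(t)\defeq L'(t)^2+L(t)^2$: indeed $\Phi'=2L'(L''+L)$ is a product of two non-positive factors, so $\Phi$ is non-decreasing on $[0,t_*]$, with $t_*$ the first $t$ at which $L(t)=0$. Since $L'(0)=0$ one has $\Phi(0)=L_g(\gamma)^2$; at $t_*$ the tube $T(t_*)$ exhausts $D$ up to a set of negligible measure, so Gauss--Bonnet on $D$ forces $-L'(t_*)=\int_D K_g\,dA=2\pi$, hence $\Phi(t_*)=4\pi^2$. The inequality $\Phi(0)\leq \Phi(t_*)$ then yields $L_g(\gamma)\leq 2\pi$.

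For rigidity, equality $L_g(\gamma)=2\pi$ forces $\Phi$ to be constant; since $L'<0$ on $(0,t_*)$ this requires $L''+L\equiv 0$, hence $(1-K_g)b\equiv 0$, giving $K_g\equiv 1$ on $D$. Running the same argument on $D_-$ yields $K_g\equiv 1$ throughout $M$, so $(M,g)$ is the round sphere. The principal technical obstacle will be the cut locus of $\gamma$ inside $D$, where $\phi$ ceases to be a diffeomorphism and $L(t)$ is only Lipschitz: Fiala's inequality and the monotonicity of $\Phi$ then need to be justified distributionally, or else one must work up to the first cut time and pass carefully to the limit $t\to t_*$ in the Gauss--Bonnet computation of $-L'(t_*)$.
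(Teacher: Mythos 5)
The paper does not prove Theorem~\ref{top} directly: it notes that the statement follows from the $c=0$ case of Theorem~\ref{HW} after cutting $M$ along $\gamma$ into two disks with geodesic boundary, and the proof actually reproduced in the text is Hang--Wang's PDE argument via Uniformization, the Liouville equation, and a sub-/super-solution comparison factor. Your argument is a genuinely different, third route: the Fiala--Hartman inner-parallel method. The reduction from $\mathbb{RP}^2$ to $S^2$ via the orientation double cover is a worthwhile detail usually left implicit, and all of the collar computations --- $b(s,0)=1$, $b_t(s,0)=0$, the Jacobi ODE $b_{tt}+K_gb=0$, $L'(0)=0$, Fiala's inequality $L''+L\le 0$, and $\Phi'=2L'(L''+L)\ge 0$ --- are correct so long as the normal exponential map is a diffeomorphism onto its image. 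In fact this route is not intrinsically weaker than the paper's: replacing $b_t(s,0)=0$ by $b_t(s,0)=-\kappa_g(s)\le -c$ gives $L'(0)\le -cL(0)$, so $\Phi(0)\ge(1+c^2)L(0)^2$, and the same monotonicity would yield the full Hang--Wang bound $L(0)\le 2\pi/\sqrt{1+c^2}$.

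The cut-locus caveat you append is load-bearing rather than cosmetic, and it bites at precisely the step where the numerical bound is extracted. Past the first cut time $L$ is in general only of bounded variation with downward jumps, so $L'$ acquires a non-positive singular part; then $L'^2$ is not a well-defined distribution, $\Phi'=2L'(L''+L)$ is not a legitimate computation, and the identity $-L'(t_*)=\int_D K_g\,\mathrm{d}A=2\pi$ is only justified by the smooth first-variation/Gauss--Bonnet argument, which breaks once the parallel curves meet the cut locus --- a set that is always encountered at or before $t_*$. If $L$ could decay faster than Gauss--Bonnet permits one would have $\Phi(t_*)>4\pi^2$, and the monotonicity $\Phi(0)\le\Phi(t_*)$ would yield nothing. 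One repair is to replace $\Phi$ by the Wronskian $\mathcal{W}(t)=L'(t)\sin t-L(t)\cos t$, which is non-increasing on $[0,\pi]$ whenever $L''+L\le 0$, and at $t_*$ needs only $L(t_*)=0$ together with the one-sided estimate $-L'(t_*)\le 2\pi$; but making $L''+L\le 0$ and the boundary values at $t_*$ precise across the cut locus still requires Hartman's structure theory of inner parallels (or the treatment in Burago--Zalgaller, or Shiohama--Tanaka). This is exactly the technical burden the paper's PDE route avoids: the Liouville comparison factor $v$ never sees a cut locus, and it delivers the rigidity case as a direct strong-maximum-principle conclusion rather than via a separate analysis of when $\Phi$ is constant.
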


There are at least two proofs\footnote{In fact, there are at least three--the paper \cite{newtop} was recently brought to the author's attention.} of this result--the original due to Toponogov using the celebrated triangle comparison theorem, and a modern PDE proof due to F.\ Hang and X.\ Wang (see \cite{K} and \cite{HW}, respectively). In both arguments, one cuts open the closed manifold along the largest closed geodesic to obtain two disks with geodesic boundary. The length inequality, and a corresponding rigidity theorem, is then proven for these disks. The PDE proof of the result applies immediately to the case of convex disks in general:

\begin{thm}[F.\ Hang and X.\ Wang \cite{HW}]\label{HW}
Let $(M, g)$ be a smooth, compact surface with boundary. Suppose $K_g\geqslant 1$ and $\kappa_g\geqslant c\geqslant 0$. Then $L_g(\partial M)\leqslant 2\pi/\sqrt{1+c^2}$. Moreover, equality holds if and only if $(M,g)$ is isometric to $\clo{\mathbf{B}}_{\inv\cot(c)}$. 
\end{thm}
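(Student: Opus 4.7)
The plan is to combine the Gauss-Bonnet theorem with the sharp Bol-Fiala isoperimetric inequality $L^2 \leq A(4\pi - A)$ for topological disks with Gaussian curvature at least one, where throughout $A$ and $L$ denote the area and boundary length of $(M,g)$.

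First I would apply Gauss-Bonnet:
\[
\int_M K_g\,dA_g + \int_{\partial M}\kappa_g\,ds_g = 2\pi \chi(M).
\]
Using $K_g \geq 1$ and $\kappa_g \geq c \geq 0$, the left side is at least $A>0$, forcing $\chi(M)>0$; since $M$ is a compact surface with boundary, this means $M$ is a topological disk and $\chi(M)=1$. The same step then gives $A + cL \leq 2\pi$. Since $A \leq 2\pi - cL \leq 2\pi$ and the function $x \mapsto x(4\pi-x)$ is monotone increasing on $[0,2\pi]$, Bol-Fiala yields
\[
L^2 \leq A(4\pi - A) \leq (2\pi - cL)(2\pi + cL) = 4\pi^2 - c^2 L^2,
\]
so $L^2(1+c^2) \leq 4\pi^2$, giving the length bound $L \leq 2\pi/\sqrt{1+c^2}$.

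For the rigidity clause, $L_g(\partial M) = 2\pi/\sqrt{1+c^2}$ would force equality simultaneously in each step. The Gauss-Bonnet step then forces $K_g \equiv 1$ and $\kappa_g \equiv c$, while the Bol-Fiala step forces $(M,g)$ to be isometric to a round geodesic disk in $\mathbb{S}^2$ (of area determined by $L$). The condition $\kappa_g \equiv c$ finally pins the geodesic radius to be exactly $\inv\cot(c)$, identifying $(M,g)$ with $\clo{\mathbf{B}}_{\inv\cot(c)}$.

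The hard part will be invoking the Bol-Fiala inequality together with its rigidity statement. A standard derivation uses uniformization to write $g = e^{2u}|dz|^2$ on the flat unit disk so that the curvature hypothesis becomes the subsolution inequality $-\Delta u \geq e^{2u}$, followed by a Schwarz-type radial symmetrization that reduces matters to an ODE comparison saturated by the spherical conformal factor. Rigidity comes from the fact that symmetrization strictly improves the relevant functional unless $u$ is already radial, in which case the ODE admits the spherical cap as its unique extremizer.
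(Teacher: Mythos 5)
The central step of your argument rests on a false inequality. You invoke a ``reverse Bol--Fiala'' bound $L^2 \leq A(4\pi - A)$ for topological disks with $K_g \geq 1$, but no such upper bound exists. The classical Bol--Fiala inequality gives a \emph{lower} bound on perimeter under an \emph{upper} curvature bound: if $K_g \leq K_0$ on a disk then $L^2 \geq 4\pi A - K_0 A^2$. Under $K_g \geq 1$, even together with $\kappa_g \geq 0$, the complementary upper bound fails. For a concrete counterexample, take a lune of small angle $\theta$ in the round $\mathbb S^2$, bounded by two meridian arcs, and smooth the two corners with short geodesic-circular arcs of radius $r \ll \theta$. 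The result is a smooth disk in $\mathbb S^2$ with $K \equiv 1$, $\kappa_g \geq 0$, area at most $2\theta$, and boundary length tending to $2\pi$ as $r \to 0$; then $L^2$ is close to $4\pi^2$ while $A(4\pi - A) < 8\pi\theta$, so your claimed bound is badly violated. These near-lunes are exactly the examples the paper points to as the obstruction to stability when $c = 0$: they nearly saturate Hang--Wang's bound $L \leq 2\pi$ without being close to a hemisphere, which is possible precisely because no lower bound on area accompanies the length bound. Your Gauss--Bonnet reduction giving $\chi(M)=1$ and $A + cL \leq 2\pi$, and the algebra afterward, are fine, but the premise they are fed is not.

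The paper's proof is structurally different and avoids any $L$-versus-$A$ isoperimetric comparison. After uniformizing $(M,g)$ to $(\overline{\B}_1, e^{2u}g_{euc})$, one constructs via the sub/super-solution method a comparison factor $v \leq u$ solving $-\Delta v = e^{2v}$ in $\B_1$ with $v = u$ and $\partial_n v + 1 \geq ce^{v}$ on $\mathbb S^1$. The constant-curvature disk $(\overline{\B}_1, e^{2v}g_{euc})$ is then realized isometrically as a uniformly $c$-convex domain $\Omega \subset \mathbb S^2$ whose circumscribed geodesic disk has radius at most $\inv\cot(c)$ and hence perimeter at most $2\pi/\sqrt{1+c^2}$; monotonicity of perimeter for nested convex spherical domains together with $u=v$ on $\mathbb S^1$ gives $L_g(\partial M) \leq 2\pi/\sqrt{1+c^2}$. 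Rigidity follows by applying the maximum principle to $w = u - v$, not from an isoperimetric equality case.
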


To attempt a proof of stability for Toponogov's theorem, it is thus natural to try proving stability in the disk rigidity theorem of F.\ Hang-X.\ Wang. However, while stability in the convex case does hold as is shown by our following work, this does not extend to the case of disks with weakly convex boundary. And indeed, stability does not generally hold in Toponogov's theorem:  

\begin{rem}[Failure of Stability in Toponogov's Theorem]
Fix any small $\varepsilon>0$ and $\delta>0$. There exists a rotationally symmetric metric $g=\mathrm{d}r^2+\phi^2(r)\mathrm{d}\theta^2$ on $\mathbb S^2$ which has $K_g\geqslant 1$, a simple closed geodesic of length $2\pi-\varepsilon$, and \[d_{GH}((\mathbb S^2, g),(\mathbb S^2, g_{rd}))\geqslant \delta.\] Here, $g_{rd}$ denotes the round metric on $\mathbb S^2$.
\end{rem}

This follows an idea of \cite{CC} \cite{CM}, and was explored in some detail in \cite{WZ}. One takes a good football metric on $\mathbb S^2/\mathbb Z_k$ and carefully smoothes the tips by gluing in spherical caps and rescaling. The interested reader is encouraged to look to the latter source for the specifics of the construction, and it is not hard to deduce from it the claimed properties in the remark above. 

We lastly remark that, in fact, F.\ Hang and X.\ Wang's result in the geodesic boundary case (which is contained in V.\ Toponogov's original proof of Theorem \ref{top}) is an affirmative answer to the famous Min-Oo Conjecture in dimension two\footnote{See \cite{brendle} for a good overview of this story.}, which is an analogue of the Positive Mass Theorem of R.\ Schoen and S.\ T.\ Yau \cite{SY} in spherical geometry. While the Conjecture is known to hold in higher dimensions in many special geometries (see \cite{brendle} again), S.\ Brendle, F.\ C.\ Marques, and A.\ Neves showed in \cite{BMN} that for every dimension at least three, counterexamples to the Conjecture as stated exist. Insofar as the stability question for various incarnations of the Positive Mass Theorem is an important driving industry in modern geometric analysis, it is natural to wonder whether or not the one true case of Min-Oo's Conjecture, i.e.\ dimension two, is stable. The example above demonstrates that this is not true directly as stated, but with strict convexity on the boundary it does. Thus, our result can also be viewed as an affirmative answer to a convex stability version of Min-Oo's Conjecture in dimension two. 

Let us now remark on the main ideas of the proof of Theorem \ref{main}. A more detailed description will be given shortly, after the requisite notation and setup has been properly introduced. Proceeding by way of contradiction, we obtain a sequence of convex topological disks, with c-convex boundaries whose lengths converge to the extremal value, which are bounded away from the model disk in $d_{GH}$. By the Gauss-Bonnet and Uniformization Theorems, studying this sequence amounts to studying a corresponding sequence of conformal factors for metrics on the unit disk of $\R^2$. New conformal factors for curvature $\equiv1$ disks with isometric boundaries to the original sequence are produced, to be compared to the original sequence. By fixing gauge and applying some conformal mapping and elliptic PDE theory, we obtain converging subsequences of both the comparison conformal factors and their differences with those of the original sequence. This gives subconvergence for the original sequence of conformal factors. We then upgrade this analytic convergence of conformal factors to geometric convergence of the manifolds and identify the limit as the model disk, obtaining the desired contradiction. 

\subsection{A Comment on Notation}
Throughout this paper, Riemannian metrics will often be written as being conformally equivalent to $g_{euc}$, the standard Euclidean metric on $\R^2$. We will often reference geometrical quantities defined with respect to such a metric $g=g_u=e^{2u}g_{euc}$ by the conformal factor $u$. For example, we may write $d_u$ for the distance function $d_{g_u}$ deriving from $g_u$. $\Delta$ will denote the Euclidean Laplace operator defined by $\Delta\defeq \mathrm{div}\circ\nabla$.

In general, we will denote by $\B_r(x,d)$ the open metric ball of radius $r$ about $x$ with respect to the distance $d$. In case $d=d_g$ derives from a Riemannian metric $g$, we will interchangeably use the notation $\B_r(x,g)$ and $\B_r(x,d_g)$ as is most convenient for exposition. In the special case $x=0$ and $g=g_{euc}$, we will simply write $\B_r$ for $\B_r(0,g_{euc})$. In case a measure is omitted from an integral, it is understood that the implied measure is the standard volume measure on the underlying space. 

Finally, we will follow tradition in letting $\Psi=\Psi(x)=\Psi(x|a_1,a_2,\ldots)$ denote a non-negative function, which may change from line to line, depending on a variable $x$ and any number of parameters $a_i$ with the property that if the $a_i$ are all held fixed, $\Psi\searrow 0$ as $x\to 0$. 

\section{Preliminaries}
\subsection{Our Setup and a Review of Hang-Wang's Argument}

For completeness of exposition, and to set some notation which we will use throughout the rest of the paper, we briefly recall the proof of Theorem \ref{HW} as it appears in in \cite{HW}, which we restate for convenience:

\begin{thm}[F.\ Hang and X.\ Wang \cite{HW}]
Let $(M, g)$ be a smooth, compact surface with boundary. Suppose $K_g\geqslant 1$ and $\kappa_g\geqslant c\geqslant 0$. Then $L_g(\partial M)\leqslant 2\pi/\sqrt{1+c^2}$. Moreover, equality holds if and only if $(M,g)$ is isometric to $\clo{\mathbf{B}}_{\inv\cot(c)}$. 
\end{thm}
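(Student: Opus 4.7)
The plan is to reduce the statement to a semilinear elliptic PDE on the Euclidean disk via uniformization, then compare against a genuine Liouville solution on the disk with matching boundary trace. By the Uniformization Theorem for compact surfaces with boundary, one may write $(M,g) \cong (\overline{\mathbb{B}}_1, e^{2u}g_{\text{euc}})$ for some $u \in C^\infty(\overline{\mathbb{B}}_1)$. The curvature hypothesis becomes the differential inequality $-\Delta u \geq e^{2u}$ in $\mathbb{B}_1$, and the convexity hypothesis becomes $\partial_\nu u + 1 \geq c\, e^u$ on $\partial \mathbb{B}_1$ (with $\nu$ the outward Euclidean normal); the boundary length reads $L_g(\partial M) = \int_{\partial \mathbb{B}_1} e^u\, d\theta$.

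Since $u$ is a supersolution of the Liouville equation $-\Delta v = e^{2v}$, I would construct a Liouville solution $v \in C^\infty(\overline{\mathbb{B}}_1)$ satisfying $v|_{\partial \mathbb{B}_1} = u|_{\partial \mathbb{B}_1}$ by monotone iteration: set $v_0 = u$ and let $v_{n+1}$ solve the linear Dirichlet problem $-\Delta v_{n+1} = e^{2v_n}$ with $v_{n+1} = u$ on $\partial \mathbb{B}_1$. A straightforward induction using the standard maximum principle for the Laplacian yields $v_{n+1} \leq v_n \leq u$ and $v_n \geq \min_{\partial \mathbb{B}_1} u$, so the sequence converges in $C^{2,\alpha}$ to a limit $v$ satisfying $v \leq u$ on $\overline{\mathbb{B}}_1$ automatically. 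The Hopf boundary point lemma applied to the nonnegative function $u - v$ (which is superharmonic by virtue of $-\Delta(u-v) \geq e^{2u}-e^{2v} \geq 0$ and vanishes on $\partial \mathbb{B}_1$) then gives $\partial_\nu u \leq \partial_\nu v$, whence
\[\partial_\nu v + 1 \geq \partial_\nu u + 1 \geq c\, e^u = c\, e^v \quad \text{on } \partial \mathbb{B}_1.\]
Consequently, the constant-curvature-$1$ comparison metric $g_v := e^{2v}g_{\text{euc}}$ has boundary geodesic curvature at least $c$ and the same boundary length as $g$.

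This reduces the theorem to its constant-curvature case. In that case, $(\overline{\mathbb{B}}_1, g_v)$ is, after a M\"obius reparametrization of the disk, isometric to a smooth geodesically convex region $\Omega \subset \mathbb{S}^2$ with $\kappa_{\partial \Omega} \geq c$. A classical spherical convex-geometry argument---essentially a spherical Bonnesen-type theorem giving that a convex region in $\mathbb{S}^2$ with boundary geodesic curvature everywhere at least $c$ is contained in a geodesic ball of radius $\cot^{-1}(c)$---combined with the perimeter monotonicity for nested convex subregions yields $L(\partial \Omega) \leq 2\pi/\sqrt{1+c^2}$, with equality forcing $\Omega$ to be a geodesic ball of radius $\cot^{-1}(c)$. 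Tracing equality back through the comparison chain then forces $u \equiv v$, and hence $K_g \equiv 1$ and $\kappa_g \equiv c$, giving $(M,g) \cong \clo{\mathbf{B}}_{\inv\cot(c)}$.

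The main technical obstacle I expect is the final spherical-geometry step. A naive combination of Gauss-Bonnet with the pointwise convexity hypothesis only delivers the weak bound $L \leq 2\pi/c$, insufficient for small $c$. The sharp constant $\sqrt{1+c^2}$ requires genuinely spherical input: either the circumscribing-ball argument alluded to above, a Pohozaev-type identity on $\overline{\mathbb{B}}_1$ pairing the radial conformal vector field $x$ with the Liouville equation, or an argument that optimizes over the three-dimensional M\"obius group acting on the disk. Without such a sharp spherical input the reduction to the constant-curvature case is not by itself sufficient, so the proof's technical weight ultimately concentrates here.
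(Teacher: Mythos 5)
Your proposal follows essentially the same path as the paper's proof: uniformize to the unit disk, construct the constant-curvature comparison factor $v$ with matching Dirichlet trace via monotone iteration (the sub-super solution method), transfer the Robin inequality through the Hopf lemma, realize $(\overline{\B}_1,e^{2v}g_{euc})$ as a $c$-convex spherical domain, and conclude via the circumscribed geodesic disk of radius $\cot^{-1}(c)$ together with perimeter monotonicity for nested convex regions. The spherical step you flag as the ``main technical obstacle'' is indeed resolved by exactly the circumscribing-ball/Blaschke-rolling argument you list first, so there is no gap to fill.
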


\begin{proof}
By Gauss-Bonnet and Uniformization, there is an isometry of $(M,g)$ with $(\clo\B_1, e^{2u}g_{euc})$ for some smooth $u\on \clo\B_1\to\R$, and the curvature conditions translate to \[\begin{cases} -\Delta u=K_ge^{2u}\geqslant e^{2u} & \text{on } \B_1  \\ \partial_n u+1=\kappa_g(\gamma)e^u\geqslant ce^{u} & \text{on }\mathbb S_1.\end{cases}\] By the sub-super solution method, we can produce a \emph{constant curvature comparison factor} $v\on\clo\B_1\to\R$ to $u$, which satisfies the following: \[\begin{cases} -\Delta v=e^{2v} & \text{on }\B_1  \\ \partial_n v+1\geqslant ce^{v} & \text{on }\mathbb S_1 \\u\geqslant v & \text{on }\B_1\\ v=u & \text{on }\mathbb S_1.\end{cases}\] The \emph{constant curvature comparison disk} $(\clo\B_1, e^{2v}g_{euc})$ can therefore be realized isometrically as a smooth domain $\Omega$ in the standard $\mathbb S^2$ with boundary that is uniformly $c$-convex. Thus, the smallest geodesic disk $D\subset\mathbb S^2$ containing $\Omega$ is of radius at most $\inv\cot(c)$, and this disk has boundary length $L_{g_{rd}}(\partial D)\leqslant 2\pi/\sqrt{1+c^2}$. Since $u=v$ on $\mathbb S^1$, \[L_{g}(\gamma)=L_{e^{2v}g_{euc}}(\mathbb S^1)=L_{g_{rd}}(\partial\Omega)\leqslant 2\pi/\sqrt{1+c^2}.\] 

Now suppose equality is obtained. The construction above forces $\partial\Omega=\partial D=\partial \clo{\mathbf{B}}_{\inv\cot(c)}$, which forces the geodesic curvature of $\partial\Omega$ to be identically $c$. Thus, our comparison factor $v$ must satisfy \[\begin{cases} -\Delta v=e^{2v} & \text{on }\B_1  \\ \partial_n v+1=ce^{v} & \text{on }\mathbb S_1 \\ v=u& \text{on }\mathbb S_1.\end{cases}\] Since on $\mathbb S^1$ we have $ce^v=\partial_n v+1\geqslant \partial_n u+1\geqslant ce^u=ce^v$ we conclude that $\gamma$ also has constant geodesic curvature $c$. Setting $w=u-v$, we have that \[\begin{cases} -\Delta w\geqslant 0 & \text{on }\B_1  \\ \partial_n w=0 & \text{on }\mathbb S_1 \\w=0 & \text{on }\B_1.\end{cases}\] It then follows that $u\equiv v$ on $\clo\B_1$, which proves that $(M,g)$ is isometric to $\clo{\mathbf{B}}_{\inv\cot(c)}$. 
\end{proof}

In this paper, we are interested in the consequences of the assumption that $L_g(\gamma)$ is \emph{nearly} equal to $2\pi/\sqrt{1+c^2}$. Let then $\varepsilon>0$ be small, and consider a compact surface $(M, g)$ with boundary $\gamma$, $K_g\geqslant 1$, $\kappa_g(\gamma)\geqslant c>0$, and $L_g(\gamma)\geqslant 2\pi(1-\varepsilon)/\sqrt{1+c^2}$. We set out to prove that $(M,g)$ is Gromov-Hausdorff close to the spherical cap characterizing the equality case. Construct exactly as above the manifolds $(\clo\B_1, e^{2u}g_{euc})$ and $(\clo\B_1, e^{2v}g_{euc})$, with the latter corresponding isometrically to some domain $\Omega$ in the standard $\mathbb S^2$. The jumping off point is the following estimate for the inradius of a strictly convex domain in a sphere in terms of the length of the boundary, and is a direct adaptation of the more general Theorem 1.2 in \cite{Drach}:

\begin{thm}[Inradius Estimate for Convex Domains] 
Let $\Omega$ be a smooth convex domain in the standard $\mathbb S^2$, with boundary of length $L$ and $\kappa_g(\partial\Omega)\geqslant c>0$. Let $r_{in}$ denote the inradius of $\Omega$. Then \[r_{in}\geqslant \inv\cot(c)-\inv\cot\left(c\sec\left(\frac{L\sqrt{1+c^2}}{4}\right)\right).\]
\end{thm}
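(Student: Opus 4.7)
The plan is to identify the extremal configuration saturating the inequality, compute its inradius explicitly via spherical trigonometry, and then reduce the general case to this extremum by comparison. I expect the extremum to be a spherical \emph{bigon}: a convex region in $\mathbb S^2$ bounded by two equal-length arcs of constant geodesic curvature $c$ meeting at two cusp vertices, by analogy with the ``lens'' extremizers for similar inradius estimates in classical convex geometry.

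\emph{Extremal computation.} Consider such a spherical bigon with vertices $p_0, p_1 \in \mathbb S^2$, each bounding arc lying on a small circle of spherical radius $R := \inv\cot(c)$ (so $\sin R = 1/\sqrt{1+c^2}$) and having length $L/2$. Let $O$ be the midpoint of the geodesic chord $p_0 p_1$, and $N$ the spherical center of one of the bounding small circles; standard lens geometry places $N$ on the perpendicular geodesic to $p_0 p_1$ through $O$, on the opposite side of $p_0 p_1$ from its arc, at some spherical distance $\theta_0 := d(O,N)$. Let $\alpha$ denote the spherical angle at $N$ in the right spherical triangle $\triangle O N p_0$, whose right angle is at $O$ and whose hypotenuse $N p_0$ has length $R$. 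By symmetry each bounding arc subtends total spherical angle $2\alpha$ at its center, so the arc-length relation $L/2 = 2\alpha \sin R$ forces $\alpha = L\sqrt{1+c^2}/4$. Napier's rules for right spherical triangles then yield
\[ \tan \theta_0 = \tan R \cdot \cos \alpha, \qquad \text{equivalently,} \qquad \cot \theta_0 = c \sec \alpha. \]
By the bigon's twofold symmetry its inscribed disk is centered at $O$, with radius equal to the perpendicular distance $R - \theta_0$ from $O$ to either arc. Hence the bigon's inradius equals
\[ \inv\cot(c) - \inv\cot\bigl(c \sec\bigl(L\sqrt{1+c^2}/4\bigr)\bigr), \]
matching the asserted RHS.

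\emph{Reduction to the extremal case.} For a general convex $\Omega \subset \mathbb S^2$ with $L_g(\partial\Omega) = L$ and $\kappa_g(\partial\Omega) \geq c$, let $p \in \Omega$ realize the inradius and let $q \in \partial\Omega$ be a tangency point of $\partial\B_{r_{in}}(p, g)$ with $\partial\Omega$. The plan is to rearrange $\Omega$ on $\mathbb S^2$ in two steps: first symmetrize across the geodesic through $p$ and $q$ (preserving $L$, $r_{in}$, and the pointwise bound $\kappa_g \geq c$), then argue variationally that among symmetric convex domains with these constraints the inradius is minimized by the bigon of constant boundary curvature $c$. This two-step rearrangement is the main technical content of Drach's Theorem 1.2 \cite{Drach}, which we invoke rather than reprove.

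\emph{Main obstacle.} The harder step is the reduction: the symmetrization on $\mathbb S^2$ must be arranged to preserve the relevant geometric quantities, and the variational identification of the bigon as the minimizer rests on a spherical analog of Blaschke's rolling theorem. These are handled in \cite{Drach}; our task reduces to checking applicability in the present setting and combining with the explicit bigon computation. As a sanity check, the limit $L = 2\pi/\sqrt{1+c^2}$ gives $\alpha = \pi/2$, so $\sec\alpha = \infty$ and the RHS reduces to $\inv\cot(c)$; the bigon degenerates to $\clo{\mathbf B}_{\inv\cot(c)}$, and the inequality recovers the equality case of Theorem \ref{HW}.
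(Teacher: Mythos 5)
The paper gives no proof of this statement, describing it only as a direct adaptation of Drach's Theorem 1.2, and your proposal ultimately defers the reduction to the extremal lens to that same citation, so the underlying approach is identical. The spherical-trigonometric computation you add (Napier's rule in the right triangle with vertices $O$, $N$, $p_0$ gives $\tan\theta_0 = \tan R\,\cos\alpha$, hence $\cot\theta_0 = c\sec\alpha$ and inradius $R - \theta_0$) is correct and usefully makes explicit how the closed-form bound in the statement arises from the bigon extremizer that Drach's theorem identifies.
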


\begin{rem}
Notice the importance of the positivity of $c$ in this estimate. Indeed, the failure of an inradius lower bound for domains with boundary having segments of zero geodesic curvature allows for collapsing, and is exactly what underlies the failure of stability in the weakly convex case. 
\end{rem}

In particular $(\clo\B_1, e^{2v}g_{euc})$, when realized isometrically as a domain $\Omega$ in $\mathbb S^2$, has a large inball of radius \[r_{in}\geqslant\inv\cot(c)-\inv\cot\left(c\sec\left(\frac{\pi}{2}(1-\varepsilon)\right)\right).\] Additionally, as proven above, $\Omega$ has a geodesic disk $D$ as an outball of radius $r_{out}\leqslant \inv\cot(c)$. We thus have observed the following, which will be a fundamental lemma for us: 

\begin{lem}\label{inradius}
Let $0<\varepsilon<1$ and fix a compact surface $(M, g)$ with boundary $\gamma$, $K_g\geqslant 1$, $\kappa_g(\gamma)\geqslant c>0$, and $L_g(\gamma)\geqslant 2\pi(1-\varepsilon)/\sqrt{1+c^2}$. Let $(\clo\B_1, e^{2v}g_{euc})\leftrightarrow \Omega\subset\mathbb S^2$ be the constant curvature comparison disk in $\mathbb S^2$, which has inradius $r_{in}$ and outradius $r_{out}$. Then \[\inv\cot(c)-\Psi(\varepsilon | c)\leqslant r_{in}\leqslant r_{out}\leqslant \inv\cot(c).\]
\end{lem}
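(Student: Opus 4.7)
The plan is to assemble the two bounds from pieces already in hand: the upper bound on $r_{out}$ is a direct consequence of the Hang-Wang construction recalled above, and the lower bound on $r_{in}$ follows from the Inradius Estimate for Convex Domains applied to $\Omega$. No new geometric input is needed beyond monotonicity of elementary functions.

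For the outradius, I would simply observe that in the proof of Theorem \ref{HW} one has already produced a geodesic disk $D \subset \mathbb S^2$ containing $\Omega$ of radius at most $\inv\cot(c)$. Since $r_{out}$ is the radius of the smallest circumscribed geodesic disk, the bound $r_{out} \leq \inv\cot(c)$ follows at once. For the inradius, the key preliminary identity is that $u = v$ on $\mathbb S^1$, so
\[L_{g_{rd}}(\partial\Omega) \;=\; L_{e^{2v}g_{euc}}(\mathbb S^1) \;=\; L_{g}(\gamma) \;\geq\; 2\pi(1-\varepsilon)/\sqrt{1+c^2}.\]
The comparison factor $v$ satisfies $\partial_n v + 1 \geq c e^{v}$ on $\mathbb S^1$, so $\Omega$ is $c$-convex in $\mathbb S^2$; the hypotheses of the cited inradius estimate are thus in force, yielding
\[r_{in} \;\geq\; \inv\cot(c) - \inv\cot\!\left(c\sec\!\left(\tfrac{L\sqrt{1+c^2}}{4}\right)\right).\]
Since $\sec$ is increasing on $[0,\pi/2)$ and $\inv\cot$ is decreasing on $(0,\infty)$, substituting the lower bound on $L$ into the right-hand side gives
\[r_{in} \;\geq\; \inv\cot(c) - \inv\cot\!\left(c\sec\!\left(\tfrac{\pi(1-\varepsilon)}{2}\right)\right).\]

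The last thing to check is that the subtracted term has the $\Psi(\varepsilon \,|\, c)$ form used throughout the paper. Holding $c$ fixed and letting $\varepsilon \searrow 0$, the argument $\pi(1-\varepsilon)/2$ increases to $\pi/2$ from below, so $\sec(\pi(1-\varepsilon)/2) \to +\infty$ and therefore $\inv\cot\!\left(c\sec(\pi(1-\varepsilon)/2)\right) \searrow 0$. This quantity is continuous and monotone in $\varepsilon$ with $c$ fixed, matching the convention for $\Psi$. I do not anticipate any real obstacle: the only subtle ingredient is the boundary-length identity $L_{g_{rd}}(\partial\Omega) = L_g(\gamma)$, which is already embedded in the sub-super solution construction via the Dirichlet condition $u = v$ on $\mathbb S^1$.
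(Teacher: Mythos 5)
Your proof is correct and follows the same route as the paper: bound $r_{out}$ by the circumscribed disk $D$ of radius $\inv\cot(c)$ already produced in the Hang--Wang argument, and bound $r_{in}$ from below by applying the cited Inradius Estimate to the $c$-convex domain $\Omega$ whose boundary length equals $L_g(\gamma)$ via the Dirichlet condition $u=v$ on $\mathbb S^1$. Your monotonicity check justifying the substitution $L\geqslant 2\pi(1-\varepsilon)/\sqrt{1+c^2}$ and the verification that the error term is of $\Psi(\varepsilon\,|\,c)$ type are both accurate.
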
 
Consequently we have volume stability, which we will use in Section \ref{GH}: \begin{prop}\label{vol conv}
With $(M, g), u, \Omega, v$ as in Lemma \ref{inradius}, we have \[\mathrm{Area}_{g_{rd}}\clo{\mathbf{B}}_{\inv\cot(c)}=2\pi\left(1-\frac{c}{\sqrt{1+c^2}}\right)\geqslant\mathrm{Area}_g(M)\geqslant2\pi\left(1-\frac{c}{\sqrt{1+c^2}}\right)-\Psi(\varepsilon|c).\]
\end{prop}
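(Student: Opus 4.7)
The plan is to establish the three pieces of the chain separately. The leftmost equality $\mathrm{Area}_{g_{rd}}(\clo{\mathbf{B}}_{\inv\cot(c)}) = 2\pi(1 - c/\sqrt{1+c^2})$ is immediate from the spherical cap formula $\mathrm{Area}(\clo{\mathbf{B}}_r \subset \mathbb S^2) = 2\pi(1 - \cos r)$ together with $\cos(\inv\cot(c)) = c/\sqrt{1+c^2}$.

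For the upper bound $\mathrm{Area}_g(M) \leq 2\pi(1 - c/\sqrt{1+c^2})$, I would first try Gauss-Bonnet on the disk $M$: $\int_M K_g\,dA_g + \int_{\partial M}\kappa_g\,dL_g = 2\pi$ since $\chi(M)=1$. The pointwise bounds $K_g \geq 1$ and $\kappa_g \geq c$ reduce this to $\mathrm{Area}_g(M) \leq 2\pi - c\,L_g(\partial M)$, and feeding in the hypothesis $L_g(\partial M) \geq 2\pi(1-\varepsilon)/\sqrt{1+c^2}$ produces $\mathrm{Area}_g(M) \leq 2\pi(1 - c/\sqrt{1+c^2}) + \Psi(\varepsilon|c)$. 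To obtain the $\varepsilon$-free form as written, one instead runs a Bishop-type tube comparison from $\partial M$: Sturm-comparing the boundary Jacobi field against the model $\cos t - c\sin t$ gives $\mathrm{Area}_g(M) \leq L_g(\partial M)\cdot(\sqrt{1+c^2}-c)$, and plugging in Hang-Wang's upper bound $L_g(\partial M) \leq 2\pi/\sqrt{1+c^2}$ collapses this precisely to $2\pi(1 - c/\sqrt{1+c^2})$.

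For the lower bound, I would exploit the comparison conformal factor $v$ already in play. Since $u \geq v$ on $\B_1$ by construction, $e^{2u} \geq e^{2v}$ pointwise, so
\[\mathrm{Area}_g(M) = \int_{\B_1} e^{2u}\,dA_{euc} \geq \int_{\B_1} e^{2v}\,dA_{euc} = \mathrm{Area}_{g_{rd}}(\Omega).\]
By Lemma \ref{inradius}, $\Omega$ contains a spherical geodesic ball of radius $r_{in} \geq \inv\cot(c) - \Psi(\varepsilon|c)$, whence $\mathrm{Area}_{g_{rd}}(\Omega) \geq 2\pi(1-\cos r_{in})$. Continuity of $r \mapsto 2\pi(1-\cos r)$ at $r=\inv\cot(c)$ then transfers the $\Psi$ on the radius into a $\Psi$ on the area, yielding $\mathrm{Area}_g(M) \geq 2\pi(1 - c/\sqrt{1+c^2}) - \Psi(\varepsilon|c)$.

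The only mild obstacle is the upper bound itself: the fast Gauss-Bonnet route leaves an $O(\varepsilon)$ slop that must either be absorbed into a $\Psi$ or replaced by the slightly heavier Sturm/Bishop tube comparison for the clean form. The lower bound, by contrast, is essentially immediate from Lemma \ref{inradius} once the pointwise inequality $u \geq v$ is exploited to dominate $\mathrm{Area}_g(M)$ by the area of $\Omega$ from below.
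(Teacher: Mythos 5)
Your proof is correct, and on the upper bound it is actually more careful than the paper's own one-line argument. The lower bound is essentially identical to the paper's: use $u\geqslant v$ (from the sub-super solution construction) to dominate $\mathrm{Area}_g(M)$ from below by $\mathrm{Area}_{g_{rd}}(\Omega)$, then invoke the inradius estimate of Lemma \ref{inradius} together with the spherical cap area formula $2\pi(1-\cos r)$ to land at $2\pi(1-c/\sqrt{1+c^2})-\Psi(\varepsilon|c)$. Where you diverge is the upper bound. The paper's proof is exactly the first route you try: Gauss-Bonnet gives $2\pi\geqslant \mathrm{Area}_g(M)+cL_g(\gamma)$, and feeding in the \emph{lower} length bound $L_g(\gamma)\geqslant 2\pi(1-\varepsilon)/\sqrt{1+c^2}$ yields $\mathrm{Area}_g(M)\leqslant 2\pi\bigl(1-c/\sqrt{1+c^2}\bigr)+2\pi c\varepsilon/\sqrt{1+c^2}$ --- i.e.\ the bound with a $+\Psi(\varepsilon|c)$ slop, not the $\varepsilon$-free inequality displayed in the statement. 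You correctly flag this and then switch to the Sturm/Bishop tube comparison from $\partial M$ against the model Jacobi field $\cos t - c\sin t$, combined with the length \emph{upper} bound $L_g(\gamma)\leqslant 2\pi/\sqrt{1+c^2}$ from Theorem \ref{HW}; since $\int_0^{\mathrm{cot}^{-1}(c)}(\cos t - c\sin t)\,dt = \sqrt{1+c^2}-c$, this collapses precisely to $2\pi(1-c/\sqrt{1+c^2})$ and proves the clean statement. Either form of the upper bound is adequate for the proposition's downstream use in Section \ref{GH} (only the lower bound and approximate volume convergence are actually invoked), but your tube comparison is what is needed to match the inequality exactly as written, so it genuinely tightens the argument rather than merely restating it.
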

\begin{proof}
By Gauss-Bonnet, the construction of $u$ and $v$, and the hypotheses we have \[2\pi=\int_{M}K_g+\int_{\gamma}\kappa_g\geqslant \mathrm{Area}_g(M)+\frac{2\pi c(1-\varepsilon)}{\sqrt{1+c^2}}\geqslant \mathrm{Area}_{g_{rd}}(\Omega)+\frac{2\pi c(1-\varepsilon)}{\sqrt{1+c^2}}.\] Applying Lemma \ref{inradius} then yields the proposition. 
\end{proof}

\subsection{Outline of the Proof}

Here we explain the broad-strokes idea of the argument for proving our main theorem, which we restate for convenience: 
\begin{thm}[Stability of the Convex Spherical Cap]
Fix $c>0$ and let $\delta>0$. Then there exists an $\varepsilon=\varepsilon(c,\delta)>0$ such that if $(M, g)$ is a two dimensional manifold with $K_{g}\geqslant 1$, $\kappa_{g}\geqslant c>0$, and $L_{g}(\partial M)\geqslant 2\pi(1-\varepsilon)/\sqrt{1+c^2}$, then \[d_{GH}((M,g),\clo{\mathbf{B}}_{\inv\cot(c)})<\delta,\] where $\clo{\mathbf{B}}_{\inv\cot(c)}$ is a closed geodesic disk of radius $\inv\cot(c)$ in the round sphere $\mathbb S^2$. 
\end{thm}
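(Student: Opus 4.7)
The plan is to argue by contradiction. Assume the theorem fails: then for some $c,\delta_0>0$ there is a sequence $(M_k, g_k)$ satisfying $K_{g_k}\geq 1$, $\kappa_{g_k}\geq c$, and $L_{g_k}(\partial M_k)\to 2\pi/\sqrt{1+c^2}$, yet $d_{GH}((M_k,g_k),\clo{\mathbf{B}}_{\inv\cot(c)})\geq\delta_0$. By Gauss-Bonnet and Uniformization, each $(M_k,g_k)$ is isometric to $(\clo\B_1, e^{2u_k}g_{euc})$, and the Hang-Wang sub-super solution construction recalled above yields comparison factors $v_k$ with $u_k\geq v_k$, $u_k=v_k$ on $\partial\B_1$, and $\Omega_k\defeq(\clo\B_1, e^{2v_k}g_{euc})\hookrightarrow\mathbb S^2$ a $c$-convex domain. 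Lemma \ref{inradius} then forces both the inradii and outradii of $\Omega_k$ to converge to $\inv\cot(c)$.

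My first substantive step is to extract a limit of the comparison factors $v_k$. Since the uniformization is determined only up to the three-parameter group of Möbius self-maps of $\B_1$, I will use this gauge freedom---together with a rotation of $\mathbb S^2$---to pin the incenter of each $\Omega_k$ at a common base point with a prescribed tangent frame. Hausdorff convergence $\Omega_k\to\clo{\mathbf{B}}_{\inv\cot(c)}$ in $\mathbb S^2$ then upgrades, via Carath\'eodory kernel convergence of the associated conformal maps $\phi_k\colon\B_1\to\Omega_k$, to smooth convergence up to the boundary. Consequently $v_k\to v_\infty$ in $C^\infty(\clo\B_1)$, where $v_\infty$ is the conformal factor of the model cap; this is the Liouville-type compactness result referenced in the abstract.

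Next I control the difference $w_k=u_k-v_k\geq 0$, which satisfies $w_k|_{\partial\B_1}=0$ and
\[
-\Delta w_k = (K_{g_k}-1)e^{2u_k} + (e^{2u_k}-e^{2v_k})\geq 0.
\]
Gauss-Bonnet applied to $(M_k,g_k)$ together with $\kappa_{g_k}\geq c$ gives $\int_{\B_1}K_{g_k}e^{2u_k}\leq 2\pi - c\,L_{g_k}(\partial M_k)$, which when paired with the area lower bound from Proposition \ref{vol conv} forces both $\int(K_{g_k}-1)e^{2u_k}\to 0$ and $\int(e^{2u_k}-e^{2v_k})\to 0$. Hence $\|\Delta w_k\|_{L^1}\to 0$, and using the elementary inequality $e^{2u_k}-e^{2v_k}\geq 2e^{2v_k}w_k$ together with the uniform positivity of $e^{2v_k}$ supplied by step two, I also obtain $\|w_k\|_{L^1}\to 0$. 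Because the total conformal mass $\int e^{2u_k}$ remains strictly below the Liouville threshold $4\pi$, a Brezis-Merle-type compactness argument rules out concentration and yields uniform $L^\infty$ bounds on $u_k$; combined with the Dirichlet condition on $w_k$ and elliptic regularity, this bootstraps the $L^1$ smallness of $\Delta w_k$ to uniform convergence $w_k\to 0$ on $\clo\B_1$, so that $u_k\to v_\infty$ uniformly.

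Finally, uniform convergence of the conformal factors on $\clo\B_1$ yields uniform convergence of the Riemannian metrics $g_k=e^{2u_k}g_{euc}$ and of their induced distance functions, so $(M_k,g_k)\to(\clo\B_1, e^{2v_\infty}g_{euc})\cong\clo{\mathbf{B}}_{\inv\cot(c)}$ in the Gromov-Hausdorff sense, contradicting the choice of $\delta_0$. I expect the hardest part to be the third step: turning the integral smallness $\|\Delta w_k\|_{L^1}\to 0$ into genuinely uniform estimates on $w_k$, which requires delicately balancing the sub-threshold Liouville mass, the superharmonicity of $w_k$ with vanishing boundary data, and the boundary behavior carried by the $c$-convexity hypothesis.
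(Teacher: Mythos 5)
Your high-level strategy (contradiction, uniformization, construction of the comparison factor $v_k$, gauge-fixing via M\"obius transformations and Carath\'eodory convergence, and using the area pinching from Gauss--Bonnet to force $\|\Delta w_k\|_{L^1}\to 0$) matches the paper's, and the identity $-\Delta w_k=(K_{g_k}-1)e^{2u_k}+(e^{2u_k}-e^{2v_k})$ together with Proposition \ref{vol conv} is exactly the right way to get $L^1$ control. However, the final leg of your argument contains a genuine gap: you cannot upgrade this to uniform convergence of the conformal factors, for at least two independent reasons. First, the claimed $C^\infty(\clo\B_1)$ convergence of $v_k$ is not available. The hypothesis gives only a \emph{lower} bound $\kappa\geq c$ on the geodesic curvature of $\partial\Omega_k$; with no upper bound, the Jordan curves $\partial\mathcal D_k$ need not have uniform $C^{1,\alpha}$ control, so Carath\'eodory gives locally uniform convergence of the Riemann maps on $\B_1$ only, and Kellogg--Warschawski cannot be applied uniformly in $k$. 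The paper accordingly only obtains $v_k\to\rho_c$ in $C^m_{loc}(\B_1)$, not up to the boundary. Second, your passage from $\|\Delta w_k\|_{L^1}\to 0$ with $w_k|_{\partial\B_1}=0$ to $\|w_k\|_{L^\infty}\to 0$ is false in two dimensions: the Green's function has a logarithmic singularity, so $L^1$-smallness of $\Delta w_k$ does not yield $L^\infty$-smallness of $w_k$ (a sequence of $L^1$-normalized bumps concentrating at a point gives unbounded potentials). Likewise, invoking ``Brezis--Merle compactness'' to get uniform $L^\infty$ bounds on $u_k$ requires an $L^p$ bound on the Gaussian curvature for some $p>1$; the paper explicitly notes we only have $K_{g_k}\in L^1$ here, so that machinery is unavailable, and only the exponential integrability estimate (Theorem \ref{BM Lemma}) can be used, giving $W^{1,p}$ control with $p<2$, not $L^\infty$.

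Because uniform convergence of the conformal factors is not attainable, your concluding step---that metric convergence and hence GH convergence follows trivially---does not hold as stated. This is precisely why the paper's proof is substantially longer after the $W^{1,p}$ estimates: it passes to convergence of the distance functions $d_{u_k}$ in $C^0_{loc}$ (via an $L^p$ bound on $e^{u_k}$ and compact H\"older embedding), identifies the limit semi-metric with $d_{\rho_c}$ using a Sobolev trace inequality on geodesics together with a boundary version of Bishop--Gromov volume comparison, and then globalizes the local identifications through Perelman's Stability Theorem, Gromov compactness, an Arzel\'a--Ascoli diagonal argument, and volume convergence to show the GH limit fills out the entire model disk. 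None of this machinery can be bypassed by the $L^\infty$ route you propose, since that route rests on estimates that fail. If you want to repair the argument, you should aim for exactly the weaker $W^{1,p}_{loc}$ ($p<2$) and $L^q_{loc}$ ($q<\infty$) convergence of $e^{u_k}$, and then carry the geometric identification through the distance functions rather than the conformal factors directly.
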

We will prove this by way of contradiction, supposing that there exists a $\delta_0>0$ such that, for any sequence $\varepsilon_k\searrow 0$, we can find $(M_k, g_k)$ as in the statement with $L_{k}(\partial M_k)\geqslant 2\pi(1-\varepsilon_k)/\sqrt{1+c^2}$ but \[d_{GH}((M_k,g_k),\clo{\mathbf{B}}_{\inv\cot(c)})\geqslant\delta_0>0.\] 

First, let's fix the notation for the model spaces that we will be comparing our given manifolds to. Given $c\geqslant 0$, define the function $\rho_c\on\clo\B_1\to\R$ by the formula \[\rho_c(x)\defeq \log\left(\frac{2R_c}{1+|R_cx|^2}\right)\]where\[R_c\defeq \sqrt{1+c^2}-c.\] Then $(\B_1, e^{2\rho_c}g_{euc})$ is isometric, via the dilation $R_c\cdot\mathrm{Id}\on\B_1\to\B_{R_c}$, to $(\B_{R_c}, e^{2\rho_0}g_{euc})$, which under stereographic projection $\Phi\on\mathbb S^2\setminus N\to\R^2$ from the north pole $N=e_3$ is isometric to the geodesic disk of radius $\inv\cot(c)$ in $\mathbb S^2$ centered at the south pole $S=-e_3$. In other words, our model extremizer $\clo{\mathbf{B}}_{\inv\cot(c)}$ can be isometrically realized as $(\overline{\B}_1, e^{2\rho_c}g_{euc})$. Here, and throughout when convenient, we may identify $\B_r$ with $\B_r\times\{0\}\subset\R^3$, in particular when considering the stereographic projection from the standard embedding of round $\mathbb S^2$. 

Our goal is to estimate the sequence of distances $d_{GH}((\overline{\B}_1,e^{2u_k}g_{euc}),(\overline{\B}_1, e^{2\rho_c}g_{euc}))$, and extract a subsequence which converges to $0$ to derive a contradiction. At the outset, we remark that by Gromov's Compactness Theorem, any sequence of manifolds satisfying our assumptions will subconverge in the Gromov-Hausdorff (GH) sense to a compact metric space. Outright, we do not know too much about what this limit is. 

To identify the limit as being the round spherical cap, we will first control the differences of conformal factors $w_k=u_k-v_k$ and $v_k-\rho_c$. Writing for $\lambda\geqslant 1$ \[e^{\lambda u_k}-e^{\lambda\rho_c}=\left(e^{\lambda v_k}-e^{\lambda \rho_c}\right)e^{\lambda w_k}+\left(e^{\lambda w_k}-1\right)e^{\lambda\rho_c},\] we will obtain $W^{1,p}_{loc}$ convergence by showing that $e^{\lambda w_k}\to 1$ in $W^{1,p}$ and that $e^{\lambda v_k}\to e^{\lambda \rho_c}$ in $C^m_{loc}$ for any $m\geqslant 0$. The control on $w_k$ follows from standard elliptic PDE techniques and a result of H.\ Brezis-F.\ Merle in \cite{BM}. The control on $v_k-\rho_c$ is more subtle, and involves some results from the theory of conformal mappings of convex domains. 

With this control established, we can show that our sequence has a local GH sublimit on each $\clo\B_r\subset\B_1$. Using variants of the Sobolev Trace Theorem and the Bishop-Gromov Theorem, we can identify the local GH limits as spherical caps. Using the Perelman Stability Theorem, a diagonal argument with the Arzel\'a-Ascoli Theorem, and volume convergence, we then glue these local limits together to identify the model disk $(\overline{\B}_1, e^{2\rho_c}g_{euc})$ as the GH limit. Thus, we obtain a convergent subsequence to the model disk, forcing a contradiction and establishing the main theorem.

\subsection{A Remark on the Brezis-Merle and other Blow-Up Theories}

Evidently, the study of sequences of Riemannian surfaces is linked, via the uniformization process described above, to the study of sequences of solutions to the Liouville equation on a two dimensional domain: \[-\Delta u=K(x)e^{2u}\text{\quad on \quad}\Omega\subset\R^2.\] The geometric interpretation is that the metric $e^{2u}g_{euc}$ on $\Omega$ has Gaussian curvature $K$. In their seminal 1991 paper \cite{BM}, H.\ Brezis and F.\ Merle studied the blow up behavior of solutions to this equation. Their analysis, which includes a study of uniform a-priori estimates for sequences of such solutions $u$, require $L^p$ bounds on the potentials $K$ for $p>1$. We only have $L^1$ bounds on $K$, however, rendering their conclusions unavailable to us. Nonetheless, we will find great use in the following fundamental estimate from \cite{BM}:

\begin{thm}[H.\ Brezis and F.\ Merle]\label{BM Lemma}
Assume $\Omega\subset\R^2$ is a bounded domain and let $u$ be a solution of \[\begin{cases} -\Delta u=f & \text{on }\Omega  \\ u=0 &\text{on }\partial\Omega\end{cases}\] with $f\in L^1(\Omega)$. Then for every $\delta\in(0,4\pi)$, we have the estimate \[\int_{\Omega} e^{\frac{4\pi-\delta}{\|f\|_{L^1(\Omega)}}|u(x)|}\mathrm{d}x\leqslant \frac{4\pi^2}{\delta}(\mathrm{diam}\Omega)^2.\]
\end{thm}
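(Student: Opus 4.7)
The plan is to reduce the problem to an explicit integral bound on $|u|$ via a Green's-function style comparison, and then combine Jensen's inequality with a direct polar-coordinate computation. Set $D \defeq \mathrm{diam}(\Omega)$ and define the comparison function
\[ v(x) \defeq \frac{1}{2\pi}\int_\Omega \log\!\left(\tfrac{D}{|x-y|}\right) |f(y)|\, \mathrm{d}y, \]
which is nonnegative on $\Omega$ because $|x-y|\leq D$ for all $x,y\in\Omega$. A direct computation gives $-\Delta v = |f|$ on $\Omega$. First I would show $|u|\leq v$ pointwise by observing that $-\Delta(v\pm u)=|f|\pm f\geq 0$ in $\Omega$, while on $\partial\Omega$ we have $v\pm u = v\geq 0$; the maximum principle for superharmonic functions then yields $v\geq |u|$.

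Next, I would normalize and apply Jensen's inequality. Let $\alpha \defeq (4\pi-\delta)/\|f\|_{L^1(\Omega)}$ and introduce the probability measure $\mathrm{d}\mu(y)\defeq|f(y)|\,\mathrm{d}y/\|f\|_{L^1(\Omega)}$ on $\Omega$. Writing
\[ \alpha v(x) = \frac{\alpha\,\|f\|_{L^1(\Omega)}}{2\pi}\int_\Omega \log\!\left(\tfrac{D}{|x-y|}\right)\mathrm{d}\mu(y), \]
the exponent inside the integral has total coefficient $\beta \defeq \alpha \|f\|_{L^1(\Omega)}/(2\pi) = 2 - \delta/(2\pi) < 2$. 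Jensen's inequality for the convex function $e^{(\cdot)}$ and the probability measure $\mu$ then gives
\[ e^{\alpha v(x)} \leq \int_\Omega \left(\tfrac{D}{|x-y|}\right)^{\beta} \mathrm{d}\mu(y). \]

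Finally, I would integrate in $x$ and apply Fubini to the nonnegative integrand. For each fixed $y\in\Omega$, since $\Omega\subset \overline{\B}_D(y)$, polar coordinates yield
\[ \int_\Omega |x-y|^{-\beta}\,\mathrm{d}x \leq \int_{\B_D(y)} |x-y|^{-\beta}\,\mathrm{d}x = 2\pi \int_0^D r^{1-\beta}\,\mathrm{d}r = \frac{2\pi D^{2-\beta}}{2-\beta}, \]
where the finiteness uses precisely $\beta<2$, the main obstacle I anticipate (it is what forces the restriction $\delta>0$ and is what breaks down in the borderline case $\delta=0$). Multiplying by $D^\beta$ and using that $\mu(\Omega)=1$ gives
\[ \int_\Omega e^{\alpha|u(x)|}\,\mathrm{d}x \leq \int_\Omega e^{\alpha v(x)}\,\mathrm{d}x \leq \frac{2\pi D^2}{2-\beta} = \frac{4\pi^2}{\delta}\,(\mathrm{diam}\,\Omega)^2, \]
which is the claimed inequality.
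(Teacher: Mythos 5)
The paper does not prove this result; it is stated and cited directly from Brezis--Merle \cite{BM} (where it appears as their Theorem 1), so there is no in-paper argument to compare against. Your proof is correct and in fact reproduces the original Brezis--Merle argument almost verbatim: pointwise domination of $|u|$ by the logarithmic potential $v$ of $|f|$ via the maximum principle, Jensen's inequality with respect to the normalized measure $|f(y)|\,\mathrm{d}y/\|f\|_{L^1(\Omega)}$ to pass the exponential inside the integral, and Fubini plus a polar-coordinate computation, with the constraint $\beta<2$ accounting exactly for the restriction $\delta>0$. The one technicality worth flagging is that with $f$ merely in $L^1$, the intended notion of solution is the very weak (duality/transposition) one, and $u$ need not be continuous; the pointwise comparison $|u|\leq v$ should therefore be justified by approximating $f$ in $L^1$ by bounded functions, applying the maximum principle to the approximating solutions, and passing to the limit, noting that $v$ itself is automatically superharmonic and lower semicontinuous as the potential of the finite measure $|f|\,\mathrm{d}y$.
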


Several authors have recently investigated possible extensions of the blow-up analysis, in particular with attention to geometric applications. For example \cite{LST}, \cite{LT}, and \cite{CL} have studied, among other things, the compactness of sequences of Riemannian surfaces with curvature bounds via an analysis of this equation. We remark that their results seem to be largely unavailable to us here, given the more general nature of our curvature bounds and the desire for identifying exact limits of converging sequences.

\section{Stability of Convex Disks}
\subsection{Controlling the Difference of Conformal Factors}\label{section w}

The goal of this section, largely a rapid-fire sequence of lemmas, is to prove the following proposition:
\begin{prop}\label{w}
Let $c>0$, $\lambda\geqslant 1$, $p\in [1,2)$, and $\varepsilon>0$ small. Given  $(\clo\B_1, e^{2u}g_{euc})$ and  $(\clo\B_1, e^{2v}g_{euc})$ as in the proof of Hang-Wang's Theorem\ref{HW} with $\kappa_u\geqslant c$ and $L_u(\mathbb S^1)\geqslant 2\pi(1-\varepsilon)/\sqrt{1+c^2}$, set $w=u-v$. Then\[\|e^{\lambda w}-1\|_{W^{1,p}(\B_1)}\leqslant\Psi(\varepsilon|c, p, \lambda).\] Moreover, the function $\Psi$ can be written down explicitly. 
\end{prop}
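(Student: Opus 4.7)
My plan is to reduce this to a PDE problem for $w$ and then combine the Brezis-Merle inequality with standard $L^p$ theory for Poisson's equation. By construction of $v$, the function $w = u - v$ is non-negative, vanishes on $\mathbb S^1$, and satisfies
\[
-\Delta w = K_u e^{2u} - e^{2v} \geq 0 \quad \text{on } \B_1,
\]
where the inequality uses $K_u \geq 1$ and $u \geq v$. The main quantitative input I would establish is the bound $\|-\Delta w\|_{L^1(\B_1)} \leq \Psi(\varepsilon \mid c)$. For this, Gauss-Bonnet applied to $(\clo\B_1, e^{2u}g_{euc})$ together with $\kappa_u \geq c$ and the length hypothesis yields
\[
\int_{\B_1} K_u e^{2u} \leq 2\pi - c L_u(\mathbb S^1) \leq 2\pi\bigl(1 - c/\sqrt{1+c^2}\bigr) + \Psi(\varepsilon \mid c),
\]
while Lemma \ref{inradius} applied to $(\clo\B_1, e^{2v}g_{euc}) \leftrightarrow \Omega \subset \mathbb S^2$ gives the matching lower bound $\int_{\B_1} e^{2v} = \mathrm{Area}(\Omega) \geq 2\pi(1 - c/\sqrt{1+c^2}) - \Psi(\varepsilon \mid c)$; subtracting produces the desired $L^1$ estimate.

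With this in hand, I treat $w$ as a non-negative solution of $-\Delta w = f$ on $\B_1$ with zero boundary data and $\|f\|_{L^1} \leq \Psi(\varepsilon \mid c)$. Theorem \ref{BM Lemma} with $\delta = 2\pi$ gives $\int_{\B_1} e^{2\pi w / \|f\|_{L^1}} \leq 8\pi$, so once $\varepsilon$ is small enough depending on $(c, \lambda, p)$, any exponential $e^{q w}$ with $q$ of fixed size compared to $\lambda$ is uniformly bounded in $L^m(\B_1)$ for every $m < \infty$. Independently, the classical $L^1$-to-weak-$L^2$ bound for the Newtonian potential in two dimensions (in the Brezis-Strauss form) yields $\|\nabla w\|_{L^q(\B_1)} \leq C_q \|f\|_{L^1}$ for every $q \in [1,2)$, which by Poincar\'e upgrades to $\|w\|_{W^{1,q}_0(\B_1)} \leq \Psi(\varepsilon \mid c, q)$ for all such $q$.

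To conclude, I use the pointwise bounds $|e^{\lambda w} - 1| \leq \lambda w \, e^{\lambda w}$ (valid because $w \geq 0$, by the mean value theorem) and $|\nabla(e^{\lambda w} - 1)| = \lambda e^{\lambda w}|\nabla w|$. H\"older with conjugate exponents $(s, s')$ then gives
\[
\|e^{\lambda w} - 1\|_{L^p(\B_1)} \leq \lambda \, \|w\|_{L^{sp}(\B_1)} \, \|e^{\lambda w}\|_{L^{s'p}(\B_1)},
\]
and, choosing $r \in (1, 2/p)$ (so that $pr < 2$) with conjugate $r'$,
\[
\|\nabla(e^{\lambda w} - 1)\|_{L^p(\B_1)} \leq \lambda \, \|e^{\lambda w}\|_{L^{pr'}(\B_1)} \, \|\nabla w\|_{L^{pr}(\B_1)}.
\]
In each display the exponential factor is uniformly bounded by the previous paragraph, while the factors $\|w\|_{L^{sp}}$ and $\|\nabla w\|_{L^{pr}}$ are each $\Psi(\varepsilon \mid c, p, \lambda)$. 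Summing the two estimates yields the desired $W^{1,p}$ bound, and every constant above is quantitatively traceable, so an explicit $\Psi$ can be extracted.

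The one step that does real work is the $L^1$ bound on $-\Delta w$: it requires both the Gauss-Bonnet-based upper bound on $\int K_u e^{2u}$ (which uses the length hypothesis and $\kappa_u \geq c$ in an essential way) and the inradius-based lower bound on $\int e^{2v}$ coming from Lemma \ref{inradius}, to prevent the two integrals from separating. After that, the remaining work is a mechanical assembly of Brezis-Merle exponential integrability, $L^p$ elliptic regularity in dimension two, and H\"older's inequality.
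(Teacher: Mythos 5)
Your argument is correct, and it differs from the paper's in two places that are worth noting. For the $L^1$ bound on $-\Delta w$, you observe directly that $-\Delta w = K_u e^{2u} - e^{2v}\geqslant 0$ pointwise, that Gauss--Bonnet with the length hypothesis gives $\int K_u e^{2u}\leqslant 2\pi - cL_u(\mathbb S^1) = 2\pi\bigl(1-c/\sqrt{1+c^2}\bigr)+\Psi$, and that the inradius estimate gives $\int e^{2v}=\mathrm{Area}_{g_{rd}}(\Omega)\geqslant 2\pi\bigl(1-c/\sqrt{1+c^2}\bigr)-\Psi$; subtracting yields the bound. The paper instead estimates the boundary integral $\int_{\mathbb S^1}\partial_n w=\int_{\mathbb S^1}(\kappa_u-\kappa_v)e^u$ by introducing an auxiliary inball conformal factor $f$ with constant boundary curvature $\cot(r_{in})$, comparing $\kappa_u e^u\leqslant\kappa_v e^v$ via the sign of $\partial_n w$, and transferring to the inball via area and length comparisons, before converting to an interior $L^1$ bound by the divergence theorem. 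Your version is more direct and avoids the auxiliary factor. For the finish, the paper introduces the mean $\mu=\frac{1}{\pi}\int e^{\lambda w}$, applies the Poincar\'e--Wirtinger inequality to bound $\|e^{\lambda w}-\mu\|_{L^p}$, and then uses the Sobolev trace theorem together with $w=0$ on $\mathbb S^1$ to show $\mu$ is close to $1$. You instead use the pointwise mean-value bound $|e^{\lambda w}-1|\leqslant\lambda w\, e^{\lambda w}$ (valid because $w\geqslant 0$), H\"older, and the integrability of $e^{qw}$ from Brezis--Merle and of $w$ and $\nabla w$ from the elliptic estimate and Poincar\'e--Sobolev. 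This sidesteps the trace theorem entirely and is arguably cleaner; either route produces an explicit $\Psi$. The intermediate machinery---Brezis--Merle exponential integrability and the $L^1$-to-$L^q$ gradient estimate for Poisson's equation on the disk---is the same in both.
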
 We begin by collecting together some basic facts about $w$: 

\begin{lem} \label{Lemma 0} The difference of conformal factors $w=u-v$ satisfies the following:
\begin{itemize}
\item[(i)] $w\geqslant 0$ on $\B_1$
\item[(ii)] $-\Delta w\geqslant 0$  on $\B_1$
\item[(iii)] $w=0$ on $\mathbb S^1$
\item[(iv)] $\partial_n w\leqslant 0$ on $\mathbb S^1$
\item[(v)] $\lvert \int_{\mathbb S^1}\partial_n w\rvert\leqslant\Psi(\varepsilon|c)$.
\end{itemize}
\end{lem}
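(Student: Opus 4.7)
The first four items come essentially for free from the sub-super-solution construction of $v$. Property (i), $w \geq 0$ on $\B_1$, is literally one of the defining bullets for the comparison factor, and so is (iii), $w=0$ on $\mathbb S^1$. For (ii), I would subtract the two PDEs:
$$-\Delta w = K_g e^{2u} - e^{2v} \geq e^{2u} - e^{2v} \geq 0,$$
using $K_g \geq 1$ for the first inequality and (i) for the second. Property (iv) is then immediate: a non-negative function vanishing on $\mathbb S^1$ has non-positive outward normal derivative there (Hopf's boundary point lemma would even give strict negativity, but this is not needed).

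The only substantive item is (v). The plan is to convert the boundary integral into a domain integral via the divergence theorem and then exploit Gauss--Bonnet together with Proposition \ref{vol conv}. Writing
$$\int_{\mathbb S^1}\partial_n w = \int_{\B_1}\Delta w = -\int_{\B_1}\bigl(K_g e^{2u} - e^{2v}\bigr),$$
I see from (iv) that this quantity is already $\leq 0$, so it suffices to obtain an upper bound of order $\Psi(\varepsilon|c)$ on the non-negative integral $\int_{\B_1}(K_g e^{2u} - e^{2v})$. I would split it into the two non-negative summands
$$\int_{\B_1}(K_g - 1)e^{2u} + \int_{\B_1}(e^{2u} - e^{2v})$$
and bound each in turn.

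Both pieces are controlled by the area stability already established in Proposition \ref{vol conv}. For the first, $\int_{\B_1}(K_g-1)e^{2u} = \int_M K_g\,\mathrm{d}A_g - \mathrm{Area}_g(M)$; Gauss--Bonnet on $(M,g)$ combined with the hypotheses $\kappa_g \geq c$ and $L_g(\gamma) \geq 2\pi(1-\varepsilon)/\sqrt{1+c^2}$ yields the upper bound $\int_M K_g\,\mathrm{d}A_g \leq 2\pi\bigl(1 - c(1-\varepsilon)/\sqrt{1+c^2}\bigr)$, while Proposition \ref{vol conv} provides the lower bound $\mathrm{Area}_g(M) \geq 2\pi(1-c/\sqrt{1+c^2}) - \Psi(\varepsilon|c)$; subtracting gives the desired estimate. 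For the second, $\int_{\B_1}(e^{2u}-e^{2v}) = \mathrm{Area}_g(M) - \mathrm{Area}_{g_{rd}}(\Omega)$, and both areas lie within $\Psi(\varepsilon|c)$ of the extremal value $2\pi(1-c/\sqrt{1+c^2})$ by the same Gauss--Bonnet argument (upper bounds) and by the inradius estimate of Lemma \ref{inradius} (lower bounds). Nothing here is genuinely hard; the main task is threading the sharp boundary length assumption through Gauss--Bonnet twice and keeping $\mathrm{Area}_g(M)$, $\mathrm{Area}_{g_{rd}}(\Omega)$, and the total Gauss curvature simultaneously close to their extremal values, at which point the explicit form of $\Psi$ is inherited directly from Lemma \ref{inradius}.
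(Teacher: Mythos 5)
Your handling of (i)--(iv) matches the paper's (which simply appeals to the construction of $v$; your explicit computation $-\Delta w = K_g e^{2u} - e^{2v} \geq 0$ for (ii) and the observation for (iv) are the intended justifications). For (v) you take a genuinely different, and arguably cleaner, route. The paper keeps the estimate on the boundary: it writes $\int_{\mathbb S^1}\partial_n w = \int_{\mathbb S^1}(\kappa_u - \kappa_v)e^u$, introduces an auxiliary conformal factor $f$ for the inball of radius $r_{in}$ (the constant-curvature disk with boundary curvature $\cot r_{in}$), and then runs a chain of Gauss--Bonnet comparisons $\int \kappa_u e^u \leqslant \int \kappa_v e^v \leqslant \int \cot(r_{in}) e^f \leqslant \int(c+\Psi)e^f$, closing the loop with Lemma \ref{inradius}. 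You instead push everything to the interior via the divergence theorem, decompose $\int_{\B_1}(K_g e^{2u}-e^{2v})$ into the non-negative pieces $\int(K_g-1)e^{2u}$ (total curvature excess over area) and $\int(e^{2u}-e^{2v})$ (area excess of $M$ over $\Omega$), and bound each by $\Psi(\varepsilon|c)$ using Gauss--Bonnet on $(M,g)$ together with Proposition \ref{vol conv}. Both arguments ultimately trace back to the inradius estimate of Lemma \ref{inradius} (yours indirectly, through Proposition \ref{vol conv}, which is proved before and independently of this lemma, so there is no circularity), but your version avoids introducing the auxiliary function $f$ and reuses the already-proved volume stability, which makes the dependence structure a bit tidier. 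Both are correct.
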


\begin{proof}
Items $(i), (ii), (iii),$ and $(iv)$ are rather immediate from the construction of $v$ via the sup-super solution method, so we focus on item $(v)$. This relies upon the inradius estimate of Lemma \ref{inradius} in a crucial way, and is in a sense the most `geometrically informed' result of the Lemma. 

We seek to estimate \[0\geqslant\int_{\mathbb S^1}\partial_n w=\int_{\mathbb S^1}(\partial_n u+1)-(\partial_n v+1)=\int_{\mathbb S^1}(\kappa_u-\kappa_v)e^u.\] By the inradius estimate \ref{inradius}, we may consider the new comparison disk $(\clo\B_1,e^{2f}g_{euc})$ where \[\begin{cases} -\Delta f=e^{2f} & \text{on }\B_1  \\ \partial_n f+1=\cot(r_{in})e^{f} & \text{on }\mathbb S_1,\end{cases}\] which is isometric to a geodesic disk in $\mathbb S^2$ of constant boundary curvature $\cot(r_{in})$, serving as an inball for $(\clo\B_1, e^{2v}g_{euc})$.  

By the Gauss-Bonnet Theorem, we have \begin{itemize}
\item[(A)] $\displaystyle{2\pi =\int_{\B_1}e^{2f}+\int_{\mathbb S^1}\cot(r_{in})e^f=\mathrm{Area}(\clo\B_1, e^{2f}g_{euc})+\int_{\mathbb S^1}\cot(r_{in})e^f}$\\
\item[(B)] $\displaystyle{2\pi =\int_{\B_1}e^{2v}+\int_{\mathbb S^1}\kappa_v e^v=\mathrm{Area}(\clo\B_1, e^{2v}g_{euc})+\int_{\mathbb S^1}\kappa_v e^v}$
\end{itemize}
and by direct comparison
\begin{itemize}
\item[(C)] $\displaystyle{\mathrm{Area}(\clo\B_1, e^{2f}g_{euc})\leqslant\mathrm{Area}(\clo\B_1, e^{2v}g_{euc}).}$
\end{itemize}
(A), (B), and (C) together imply that \[\int_{\mathbb S^1}\kappa_v e^v\leqslant \int_{\mathbb S^1}\cot(r_{in})e^f.\] Finally, observe that \[\int_{\mathbb S^1} e^f=L_f(\mathbb S^1)\leqslant \int_{\mathbb S^1} e^v= L_v(\mathbb S^1)\leqslant \frac{2\pi}{\sqrt{1+c^2}}.\] Using parts (i)-(iii) of the Lemma, the inradius estimate \ref{inradius}, together with the prior two observations, we get \begin{equation*}\begin{split}
\int_{\mathbb S^1}ce^v=\int_{\mathbb S^1}ce^u&\leqslant\int_{\mathbb S^1}\kappa_u e^u\leqslant\int_{\mathbb S^1}\kappa_v e^v\\
&\leqslant \int_{\mathbb S^1}\cot(r_{in})e^f\leqslant \int_{\mathbb S^1} (c+\Psi)e^f\leqslant\int_{\mathbb S^1} ce^v+\Psi. 
\end{split}
\end{equation*}
Seeing as though \[\int_{\mathbb S^1} ce^v=cL_v(\mathbb S^1)\in\left[\frac{2\pi c (1-\varepsilon)}{\sqrt{1+c^2}},\frac{2\pi c}{\sqrt{1+c^2}}\right]\] it follows that, as desired, \[0\geqslant\int_{\mathbb S^1}\partial_n w=\int_{\mathbb S^1}(\kappa_u-\kappa_v)e^u\geqslant -\Psi.\]
\end{proof}

\begin{lem}\label{lemma w 1}
$\|\Delta w\|_{L^1(\B_1)}\leqslant\Psi(\varepsilon |c)$. 
\end{lem}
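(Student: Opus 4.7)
The plan is very short: this estimate is essentially a direct consequence of items (ii) and (v) of Lemma \ref{Lemma 0} together with the divergence theorem. The key observation is that, because $-\Delta w \geqslant 0$ on $\B_1$ by item (ii), the function $|\Delta w|$ coincides with $-\Delta w$ pointwise, so the $L^1$ norm in question is actually a signed integral we can evaluate via boundary data.

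Concretely, I would write
\[
\|\Delta w\|_{L^1(\B_1)} = \int_{\B_1} |\Delta w| = -\int_{\B_1}\Delta w,
\]
and then apply the divergence theorem (equivalently, integrate by parts against the constant function $1$) to obtain
\[
-\int_{\B_1}\Delta w = -\int_{\mathbb S^1}\partial_n w = \left|\int_{\mathbb S^1}\partial_n w\right|,
\]
where the last equality uses item (iv) of Lemma \ref{Lemma 0} (so that $\partial_n w \leqslant 0$ has constant sign). Invoking item (v) of Lemma \ref{Lemma 0} then finishes the proof with the same $\Psi = \Psi(\varepsilon|c)$.

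There is no real obstacle here; the content has already been packaged into Lemma \ref{Lemma 0}. The only thing worth being a little careful about is ensuring the divergence theorem is applied legitimately, which is fine since $u$ and $v$ are both smooth up to the boundary by construction (as conformal factors producing smooth metrics on the closed disk). In particular, no regularity issues arise, and the constant in $\Psi$ is literally the one produced in Lemma \ref{Lemma 0}(v), so the claim that $\Psi$ is explicit is preserved.
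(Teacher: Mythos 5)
Your proof is correct and is essentially identical to the paper's: both apply the divergence theorem and then invoke items (ii), (iv), and (v) of Lemma \ref{Lemma 0}. The paper simply compresses the argument into a single displayed inequality.
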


\begin{proof}
By the divergence theorem and Lemma \ref{Lemma 0}, we have \[0\leqslant -\int_{\B_1}\Delta w=-\int_{\mathbb S^1}\partial_n w\leqslant \Psi.\]
\end{proof}

\begin{lem}\label{BM 1}
For all $\lambda\geqslant 1$, and for all $\varepsilon>0$ small enough (depending on $\lambda$), we have \[\pi\leqslant\int_{\B_1}e^{\lambda w}\leqslant 4\pi+\Psi(\varepsilon | c,\lambda).\]
\end{lem}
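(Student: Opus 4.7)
The lower bound is trivial. By Lemma \ref{Lemma 0}(i), $w \geq 0$ on $\B_1$, so $e^{\lambda w} \geq 1$ pointwise, and hence $\int_{\B_1} e^{\lambda w} \geq |\B_1| = \pi$.

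For the upper bound, my plan is to apply the Brezis-Merle estimate (Theorem \ref{BM Lemma}) directly to $w$, then upgrade the resulting very-high-exponent integrability to a bound at exponent $\lambda$ via a simple H\"older interpolation. The hypotheses of Theorem \ref{BM Lemma} are in place: $w$ is smooth, $w = 0$ on $\mathbb{S}^1$ by Lemma \ref{Lemma 0}(iii), and $f \defeq -\Delta w \in L^1(\B_1)$ with $\|f\|_{L^1(\B_1)} \leq \Psi(\varepsilon \mid c)$ by Lemma \ref{lemma w 1}. Since $\mathrm{diam}\,\B_1 = 2$ and $|w|=w$, the theorem yields, for every $\delta \in (0, 4\pi)$,
$$\int_{\B_1} \exp\!\left(\tfrac{4\pi - \delta}{\|\Delta w\|_{L^1(\B_1)}}\, w\right) \leq \frac{16\pi^2}{\delta}.$$
Fixing $\delta = 2\pi$ for concreteness, this reads $\int_{\B_1} e^{Aw} \leq 8\pi$, where $A \defeq 2\pi/\|\Delta w\|_{L^1(\B_1)}$. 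The key point is that, by Lemma \ref{lemma w 1}, $A \to \infty$ as $\varepsilon \to 0$, with an explicit rate inherited from the $\Psi$ of Lemma \ref{lemma w 1}.

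Now for any $\varepsilon$ small enough (depending on $\lambda$) that $A \geq \lambda$, H\"older's inequality with conjugate exponents $A/\lambda$ and $A/(A-\lambda)$ gives
$$\int_{\B_1} e^{\lambda w} \;=\; \int_{\B_1} \bigl(e^{Aw}\bigr)^{\lambda/A} \;\leq\; \left(\int_{\B_1} e^{Aw}\right)^{\lambda/A} \pi^{\,1-\lambda/A} \;\leq\; \pi \cdot 8^{\lambda/A}.$$
Since $\lambda/A = \lambda\,\|\Delta w\|_{L^1(\B_1)}/(2\pi)$ is bounded by an explicit $\Psi(\varepsilon \mid c, \lambda)$, the right-hand side differs from $\pi$ by an $\varepsilon$-quantifiable error $\Psi(\varepsilon \mid c, \lambda)$, and in particular is $\leq 4\pi + \Psi(\varepsilon \mid c, \lambda)$, as claimed.

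I do not expect this argument to contain a genuine obstacle. The geometric content has been pushed into Lemma \ref{lemma w 1} (which is where the inradius estimate entered), and the remaining analytic content is the Brezis-Merle inequality, already stated for us as Theorem \ref{BM Lemma}. The only care needed is in the quantification of the smallness condition on $\varepsilon$: one must pick $\varepsilon$ so that the Brezis-Merle exponent $(4\pi-\delta)/\|\Delta w\|_{L^1}$ comfortably dominates $\lambda$, which is automatic since $\|\Delta w\|_{L^1} \to 0$, and must then track the explicit form of $\Psi(\varepsilon \mid c, \lambda) \sim \lambda \|\Delta w\|_{L^1}/(2\pi)$ through the bound $\pi\cdot 8^{\lambda/A}$ to verify the final estimate.
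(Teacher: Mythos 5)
Your proof is correct and uses the same core ingredient as the paper (apply Brezis--Merle to $w$, which has zero boundary data and $\|\Delta w\|_{L^1}\le\Psi(\varepsilon\mid c)$ by Lemma \ref{lemma w 1}), but you convert the Brezis--Merle output into the desired bound in a different way. The paper chooses the free parameter $\delta$ to depend on $\varepsilon$ and $\lambda$, specifically $\delta=4\pi-\lambda\Psi$, so that the Brezis--Merle exponent $\frac{4\pi-\delta}{\|\Delta w\|_{L^1}}=\frac{\lambda\Psi}{\|\Delta w\|_{L^1}}\geqslant\lambda$ dominates $\lambda$ directly; then since $w\geqslant 0$ one has $e^{\lambda w}\leqslant e^{\frac{\lambda\Psi}{\|\Delta w\|_{L^1}}|w|}$ pointwise, and the Brezis--Merle constant $\frac{16\pi^2}{4\pi-\lambda\Psi}$ tends to $4\pi$ as $\varepsilon\to 0$. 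You instead fix $\delta=2\pi$ once and for all, read off a uniform bound $\int_{\B_1}e^{Aw}\leqslant 8\pi$ with $A=2\pi/\|\Delta w\|_{L^1}\to\infty$, and then step down from exponent $A$ to exponent $\lambda$ by H\"older interpolation. Both arguments are valid and hinge on the same observation that the Brezis--Merle exponent diverges as $\|\Delta w\|_{L^1}\to 0$; the paper's choice of $\delta$ avoids the interpolation step but locks the asymptotic constant at $4\pi$ (since letting $\delta\nearrow 4\pi$ pushes $\frac{4\pi^2}{\delta}(\mathrm{diam}\,\B_1)^2\to 4\pi$), whereas your fixed-$\delta$-plus-H\"older route actually yields the sharper asymptotic $\int e^{\lambda w}\leqslant\pi\cdot 8^{\lambda/A}\to\pi$, which matches the trivial lower bound. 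For the purposes of the lemma the extra sharpness is immaterial, but it is a genuine (small) improvement. Your tracking of the smallness requirement on $\varepsilon$ (namely $A\geqslant\lambda$, i.e.\ $\|\Delta w\|_{L^1}\leqslant 2\pi/\lambda$) is exactly the needed quantification, mirroring the paper's requirement that $\delta=4\pi-\lambda\Psi\in(0,4\pi)$.
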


\begin{proof}
In Theorem \ref{BM Lemma}, take $\Omega=\B_1$, $\Psi$ as above, and $\delta=4\pi-\lambda\Psi$. For sufficiently small $\varepsilon>0$ (depending on $\lambda$), we have $\delta=\delta(\lambda)\in (0, 4\pi)$. Then (since also $w\geqslant 0)$ \[\pi\leqslant\int_{\B_1}e^{\lambda w}\leqslant \int_{\B_1} e^{\frac{\Psi}{\|\Delta w\|_{L^1(\Omega)}}|\lambda w(x)|}\leqslant \frac{4\pi^2}{4\pi-\lambda\Psi}\cdot 4=4\pi+\Psi.\] 
\end{proof}

\begin{lem}
For all $p\in [1,2)$, $\|\nabla w\|_{L^{p}(\B_1)}\leqslant\Psi(\varepsilon |c, p)$. 
\end{lem}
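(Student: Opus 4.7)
The plan is to treat $w$ as the solution of the Dirichlet problem
\[
-\Delta w = f \text{ on } \B_1, \quad w = 0 \text{ on } \mathbb S^1,
\]
where $f := -\Delta w \geqslant 0$ by Lemma \ref{Lemma 0}, and $\|f\|_{L^1(\B_1)} \leqslant \Psi(\varepsilon|c)$ by Lemma \ref{lemma w 1}. In dimension two the critical Sobolev exponent for the $L^1 \to W^{1,\cdot}$ elliptic estimate is $n/(n-1) = 2$, so one expects $W^{1,p}$ control for every $p<2$, depending linearly on $\|f\|_{L^1}$.

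First, I would invoke the Green's function representation on the disk. Since $w$ vanishes on $\mathbb S^1$, the boundary term in Green's identity drops out and
\[
w(x) = \int_{\B_1} G(x,y)\, f(y)\,\mathrm{d}y.
\]
The explicit expression $G(x,y) = -\tfrac{1}{2\pi}\log|x-y| + h(x,y)$, with $h$ smooth up to the boundary, yields by differentiation under the integral the pointwise bound
\[
|\nabla w(x)| \leqslant C \int_{\B_1} \frac{f(y)}{|x-y|}\,\mathrm{d}y.
\]

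Next, Minkowski's integral inequality gives
\[
\|\nabla w\|_{L^p(\B_1)} \leqslant C \int_{\B_1} f(y) \,\bigl\| |\cdot - y|^{-1}\bigr\|_{L^p_x(\B_1)}\,\mathrm{d}y \leqslant C\, C_p \,\|f\|_{L^1(\B_1)},
\]
where $C_p := \bigl\| |z|^{-1}\bigr\|_{L^p(\B_2)}$. A direct polar-coordinate integration shows that $C_p$ is finite precisely when $p<2$, and $C_p \nearrow \infty$ as $p \nearrow 2$. Combining this with Lemma \ref{lemma w 1} produces $\|\nabla w\|_{L^p(\B_1)} \leqslant C_p \,\Psi(\varepsilon|c)$, which is the desired bound with a fully explicit $\Psi(\varepsilon|c,p)$.

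The main point of care is the borderline nature of the exponent: the endpoint $p=2$ fails because the two-dimensional Newtonian potential just misses producing an $L^2$ gradient, and this is precisely what forces the restriction $p<2$ in the statement. Otherwise this is a direct application of classical elliptic theory for $L^1$ data with vanishing Dirichlet values; the only real bookkeeping is to track the explicit dependence of $C_p$ on $p$, as needed to record $\Psi$ explicitly in Proposition \ref{w}.
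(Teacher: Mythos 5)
Your proof is correct and follows essentially the same route the paper takes: the paper reduces the lemma to the elliptic estimate $\|\nabla w\|_{L^p(\B_1)}\leqslant C(p)\|f\|_{L^1(\B_1)}$ and notes in a footnote that this follows "from the Green's Representation of the solution $w$" together with "the Minkowski integral inequality" — exactly the two steps you carry out. One small imprecision worth flagging: the regular part $h(x,y)$ of the Green's function on the disk is not smooth up to the boundary jointly in $(x,y)$ (it degenerates as both arguments approach the same boundary point), but the pointwise gradient bound $|\nabla_x G(x,y)|\leqslant C/|x-y|$ does hold uniformly on $\B_1\times\B_1$, as one sees from the explicit formula $G(x,y)=\tfrac{1}{2\pi}\log\bigl(|y|\,|x-y^*|/|x-y|\bigr)$ with $y^*=y/|y|^2$ and the elementary inequality $|x-y^*|\,|y|\geqslant|x-y|$ on $\overline{\B}_1\times\overline{\B}_1$; so the argument goes through unchanged.
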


\begin{proof}
This is immediate from the following, concerning solutions of the Poisson equation on the unit ball: 
\begin{prop}\footnote{To prove this, one can compute directly from the Green's Representation of the solution $w$ and apply the Minkowski integral inequality.}
Fix $p\in [1,2)$. Then there exists a $C=C(p)>0$ such that if $w$ is a smooth solution of \[\begin{cases} -\Delta w=f & \text{on }\B_1  \\ w=0 & \text{on }\mathbb S_1,\end{cases}\] then\[\|\nabla w\|_{L^p(\B_1)}\leqslant C(p) \|f\|_{L^1(\B_1)}.\]
\end{prop}
\end{proof}

\begin{lem}\label{w sobolev}
For all $p\in [1,2)$, $\|w\|_{W^{1,p}(\B_1)}\leqslant\Psi(\varepsilon|c,p)$. Moreover, for all $p\in [1,\infty)$, $\|w\|_{L^{p}(\B_1)}\leqslant\Psi(\varepsilon|c,p)$. 
\end{lem}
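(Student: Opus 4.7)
The plan is to derive both assertions from the previous lemma together with two standard analytic tools: Poincar\'e's inequality on $W^{1,p}_0(\B_1)$ (which handles the Sobolev bound as well as the $L^p$ bound for $p<2$), and an interpolation against the exponential integrability supplied by Lemma \ref{BM 1} (which handles the $L^p$ bound for $p\geqslant 2$). No additional geometric input is required; the only conceptual point worth noting is that the gradient estimate $\|\nabla w\|_{L^p}\leqslant \Psi(\varepsilon|c,p)$ genuinely degenerates as $p\nearrow 2$, so a direct application of the Sobolev embedding $W^{1,p_0}_0\hookrightarrow L^{2p_0/(2-p_0)}$ at any fixed $p_0<2$ is not quite enough to reach all finite $p$, and the Brezis--Merle estimate is exactly what bridges that gap.

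For the Sobolev bound, note that $w|_{\mathbb S^1}=0$ by Lemma \ref{Lemma 0}(iii), and $w$ is smooth on $\clo{\B}_1$, so $w\in W^{1,p}_0(\B_1)$ for every $p\in[1,\infty)$. Poincar\'e's inequality then yields $\|w\|_{L^p(\B_1)}\leqslant C(p)\|\nabla w\|_{L^p(\B_1)}$, and for $p\in[1,2)$ the previous lemma bounds the right-hand side by $\Psi(\varepsilon|c,p)$. Combining with that same gradient estimate gives $\|w\|_{W^{1,p}(\B_1)}\leqslant \Psi(\varepsilon|c,p)$ as required, and in particular supplies an $L^1$ bound $\|w\|_{L^1(\B_1)}\leqslant \Psi(\varepsilon|c)$ to feed the interpolation below.

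For the $L^p$ bound at arbitrary finite $p$, the only remaining case is $p\geqslant 2$. Here I would interpolate the $L^1$ bound against a uniform $L^q$ bound for some $q>p$. Using the elementary pointwise inequality $t^q\leqslant (q/e)^q e^t$ for $t\geqslant 0$ (obtained by maximising $t^q e^{-t}$), applied with $w\geqslant 0$ from Lemma \ref{Lemma 0}(i), together with Lemma \ref{BM 1} at $\lambda=1$, I get
\[
\int_{\B_1}w^q \leqslant (q/e)^q\int_{\B_1}e^{w} \leqslant (q/e)^q\bigl(4\pi + \Psi(\varepsilon|c,1)\bigr),
\]
so $\|w\|_{L^q(\B_1)}\leqslant C(q)$ uniformly in $\varepsilon$. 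Choosing $q=2p$ and applying the log-convexity of $L^p$ norms with $\theta\in(0,1)$ defined by $1/p=\theta+(1-\theta)/q$ then produces
\[
\|w\|_{L^p(\B_1)} \leqslant \|w\|_{L^1(\B_1)}^{\theta}\,\|w\|_{L^q(\B_1)}^{1-\theta} \leqslant \Psi(\varepsilon|c)^{\theta}\,C(2p)^{1-\theta},
\]
which is exactly of the form $\Psi(\varepsilon|c,p)$ since $\theta=\theta(p)>0$. I do not anticipate any genuine obstacle in executing these steps; the whole lemma is essentially the bookkeeping that packages Poincar\'e, Brezis--Merle, and H\"older interpolation into a single convenient statement about $w$.
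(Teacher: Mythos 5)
Your argument is correct, but it is more elaborate than the paper's proof, and the reason you give for bypassing the Sobolev embedding is unfounded. The paper's proof is exactly your first half (Poincar\'e on $W^{1,p}_0(\B_1)$ gives the $W^{1,p}$ bound for $p\in[1,2)$), and then for the $L^p$ bound it simply invokes the Sobolev Embedding Theorem. You worry that the gradient bound degenerates as $p\nearrow 2$ and that therefore $W^{1,p_0}_0\hookrightarrow L^{2p_0/(2-p_0)}$ ``at any fixed $p_0<2$'' cannot reach all finite $p$; but the lemma's conclusion is stated for each fixed $p$, with $\Psi$ allowed to depend on $p$, so the correct move is to let $p_0$ depend on the target exponent: given $p\in[1,\infty)$ choose any fixed $p_0\in\bigl[\tfrac{2p}{2+p},2\bigr)$, so that $\tfrac{2p_0}{2-p_0}\geqslant p$, and then $\|w\|_{L^p(\B_1)}\leqslant C(p,p_0)\,\|w\|_{W^{1,p_0}(\B_1)}\leqslant C(p,p_0)\,\Psi(\varepsilon|c,p_0)$, which is of the required form $\Psi(\varepsilon|c,p)$. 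The degeneration as $p_0\nearrow 2$ never enters because $p_0$ is held fixed once $p$ is.

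That said, your alternative route --- the pointwise estimate $t^q\leqslant (q/e)^q e^t$ for $t\geqslant 0$ applied to $w\geqslant 0$, fed through Lemma \ref{BM 1} to get a uniform $\|w\|_{L^q}$ bound, then interpolated against the $L^1$ smallness via log-convexity of $L^p$ norms --- is perfectly valid, with $\theta=\theta(p)>0$ ensuring the product still vanishes as $\varepsilon\to 0$. It is genuinely more work: it re-imports the Brezis--Merle estimate for a second time, whereas the intended argument extracts the $L^p$ bound as a soft consequence of the $W^{1,p}$ bound already in hand. Your version has the minor advantage of giving explicit uniform (in $\varepsilon$) $L^q$ bounds for all $q$ along the way, but for the purposes of the lemma the paper's one-line appeal to Sobolev embedding suffices.
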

\begin{proof}
Since $w=0$ along $\mathbb S^1$, the Poincar\'e Inequality implies the first claim in the Lemma, after which the second part follows from the Sobolev Embedding Theorem.  
\end{proof}

\begin{lem}\label{exp w sobolev}
For every $\lambda\geqslant 1$ and $p\in [1,2)$, $\|e^{\lambda w}\|_{W^{1,p}(\B_1)}^p\leqslant 4\pi+\Psi(\varepsilon|c,p,\lambda)$. In particular, $\|\nabla e^{\lambda w}\|_{L^p(\B_1)}\leqslant\Psi(\varepsilon|c,p,\lambda)$.
\end{lem}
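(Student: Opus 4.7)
The plan is to differentiate $e^{\lambda w}$ via the chain rule and then control the two resulting pieces by Hölder's inequality, using Lemma \ref{BM 1} on the exponential factor and the previous lemma on the gradient factor. The $L^p$ part of the norm is handled directly: since $\lambda p \geqslant 1$, Lemma \ref{BM 1} (applied with the exponent $\lambda p$, for $\varepsilon$ small depending on $\lambda$ and $p$) gives
\[
\|e^{\lambda w}\|_{L^p(\B_1)}^p = \int_{\B_1} e^{\lambda p w} \leqslant 4\pi + \Psi(\varepsilon\mid c,p,\lambda).
\]

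For the gradient piece, write $\nabla e^{\lambda w} = \lambda e^{\lambda w}\nabla w$, so that
\[
\|\nabla e^{\lambda w}\|_{L^p(\B_1)}^p = \lambda^p \int_{\B_1} e^{\lambda p w}\,|\nabla w|^p.
\]
The idea is to choose conjugate exponents $q,q'$ so that $e^{\lambda p w}$ lies in $L^q$ (which just needs $\lambda p q \geqslant 1$, automatic) and $|\nabla w|^p$ lies in $L^{q'}$ subject to the constraint $p q' < 2$, which by the previous lemma makes $\|\nabla w\|_{L^{pq'}}^{p} \leqslant \Psi$. The condition $p q' < 2$ is equivalent to $q > 2/(2-p)$, which is compatible with $q \in (1,\infty)$ since $p \in [1,2)$. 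Fix such a $q$. Then by Hölder,
\[
\int_{\B_1} e^{\lambda p w}\,|\nabla w|^p \leqslant \left(\int_{\B_1} e^{\lambda p q w}\right)^{1/q}\left(\int_{\B_1}|\nabla w|^{p q'}\right)^{1/q'} \leqslant \bigl(4\pi + \Psi\bigr)^{1/q}\,\Psi^{p},
\]
where the first factor is bounded by Lemma \ref{BM 1} (for $\varepsilon$ small depending on $\lambda p q$, hence on $\lambda, p$) and the second is bounded by the previous lemma. This right-hand side is of order $\Psi(\varepsilon\mid c,p,\lambda)$.

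Combining the two estimates gives
\[
\|e^{\lambda w}\|_{W^{1,p}(\B_1)}^p = \|e^{\lambda w}\|_{L^p(\B_1)}^p + \|\nabla e^{\lambda w}\|_{L^p(\B_1)}^p \leqslant 4\pi + \Psi(\varepsilon\mid c,p,\lambda),
\]
and in particular the gradient bound follows immediately. There is no serious obstacle here; the only delicate point is the bookkeeping of exponents, where one must verify that a choice $q > 2/(2-p)$ can always be made so that the gradient piece stays below the Sobolev threshold $2$, while the polynomially amplified exponent $\lambda p q$ in the Brezis–Merle estimate remains admissible for sufficiently small $\varepsilon$ depending on $\lambda, p$.
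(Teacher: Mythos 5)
Your proof is correct and follows essentially the same route as the paper: split the $W^{1,p}$ norm, bound the $L^p$ piece by Lemma~\ref{BM 1} applied with exponent $\lambda p$, differentiate by the chain rule, and apply H\"older so that the gradient factor lands below the critical exponent $2$ while the exponential factor carries the large Brezis--Merle exponent. The only difference from the paper is purely notational: you call the H\"older exponent on the exponential factor $q$ (requiring $q > 2/(2-p)$), whereas the paper calls the one on the gradient factor $q$ (requiring $q \in (1, 2/p)$); the two conventions are equivalent.
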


\begin{proof}
By Lemma \ref{BM 1}, \[\|e^{\lambda w}\|_{L^p(\B_1)}^p\leqslant 4\pi+\Psi\] for any $p\in [1,\infty)$, provided $\varepsilon$ is small enough depending upon the choice of $p$. Now, observe that \[\nabla(e^{\lambda w})=\lambda e^{\lambda w}\nabla w\]so that if $p\in [1,2)$, and we choose $q\in (1,2/p)$, 
\begin{equation*}
\begin{split}
\|\nabla(e^{\lambda w})\|_{L^p(\B_1)}^p=\lambda^p\int_{\B_1} e^{\lambda p w}|\nabla w|^p&\leqslant \lambda^p\left(\int_{\B_1}e^{\lambda pq'w}\right)^\frac{1}{q'}\left(\int_{\B_1}|\nabla w|^{pq}\right)^\frac{1}{q} \\
&\leqslant \lambda^p\left(4\pi+\Psi\right)^\frac{1}{q'}\left(\int_{\B_1}|\nabla w|^{pq}\right)^\frac{1}{q}\\
&\leqslant\Psi.
\end{split}
\end{equation*} Indeed, $q\in (1,2/p)$ implies that $\lambda p q'\in (1,\infty)$ and $pq\in(1,2)$, so that Lemma \ref{BM 1} applies to the first integral factor and Lemma \ref{w sobolev} applies in the second. As such, \[\|e^{\lambda w}\|_{W^{1,p}(\B_1)}^p= \|e^{\lambda w}\|_{L^p(\B_1)}^p+\|\nabla(e^{\lambda w})\|_{L^p(\B_1)}^p\leqslant4\pi+\Psi.\]
\end{proof}

At last, we can establish the main proposition of the section:

\begin{proof}[Proof of Proposition \ref{w}]
Fix $c>0, \lambda\geqslant 1, p\in [1,2)$, $\varepsilon>0$ small, and let \[\mu:=\frac{1}{\pi}\int_{\B_1}e^{\lambda w}.\] By the Poincar\'e Inequality, \[\|e^{\lambda w}-\mu\|_{L^{p}(\B_1)}\leqslant C\|\nabla e^{\lambda w}\|_{L^{p}(\B_1)}\leqslant\Psi\] with $\Psi$ as in Lemma \ref{exp w sobolev}. Thus, by the Sobolev Trace Theorem and the fact that $w=0$ on $\mathbb S^1$, \[(2\pi)^\frac{1}{p}\vert 1-\mu\vert=\|1-\mu\|_{L^{p}(\mathbb S^1)}\leqslant C\|e^{\lambda w}-\mu\|_{W^{1,p}(\B_1)}\leqslant\Psi.\] Consequently, we find that \[\|e^{\lambda w}-1\|_{W^{1,p}(\B_1)}\leqslant\|e^{\lambda w}-\mu\|_{W^{1,p}(\B_1)}+\|\mu-1\|_{W^{1,p}(\B_1)}\leqslant\Psi.\] By the Sobolev Embedding Theorem the second claim now follows as well. 
\end{proof}

\subsection{Estimates for the Constant Curvature Comparison Metrics}\label{section v}

In this section we study the sequence of comparison disks $(\clo\B_1, e^{2v_k}g_{euc})$ as their boundary lengths go to the extremal length $2\pi/\sqrt{1+c^2}$. This is complicated by the fact that the process of representing such disks via the Uniformization Theorem has a large gauge invariance and consequent loss of compactness. 

To illustrate this, recall that we isometrically represented $(\B_1, e^{2v}g_{euc})$ by a smooth domain $\Omega$ in the round sphere, and that this choice is unique only up to action of the symmetry group $\mathcal O(3)\circlearrowright\mathbb S^2$. Under stereographic projection, such a domain is isometric to a disk of the form $(\mathcal D, e^{2\rho_0}g_{euc})$ for some smooth domain $\mathcal D\subset\R^2$, where $e^{2\rho_0}g_{euc}$ is the round metric in stereographic coordinates. We seek to relate $v$ to $\rho_c$ on $\B_1$ via pullback of $(\mathcal D, e^{2\rho_0}g_{euc})$ by an isometry $F\on\B_1\to\mathcal D$, but the ambiguity in the choice of $\Omega$, and thus $\mathcal D$ and $F$, matters a great deal insofar as estimates are concerned. 

In particular, the gauge invariance that we seek to control manifests in the following form. Given an $f\on\B_1\to\R$ and a $\phi\in\mathrm{Diff}(\B_1)$, we employ the notation $(f)_\phi\defeq f\circ \phi+\log|\phi'|$. Observe that if $\phi\in\mathrm{Conf}(\B_1)$ is a Mobius transformation, then $(\B_1,e^{2(f)_\phi}g_{euc})$ is isometric to $(\B_1, e^{2f}g_{euc})$ under the action of pullback by $\phi$. Thus, the conformal factors $f$ and $(f)_\phi$ represent the same geometric object, and therefore the corresponding curvature data in PDE form is invariant under this action of $\mathrm{Conf}(\B_1)$. For example, if $v$ is any solution of $-\Delta v=e^{2v}$ on $\B_1$ and $\phi\in\mathrm{Conf}(\B_1)$, we have (using an apostrophe to denote complex differentiation)\[-\Delta(v)_\phi=-\Delta(v\circ \phi)=-(\Delta u)\circ \phi|\phi'|^2=e^{2v\circ \phi} |\phi'|^2=e^{2(v)_\phi}.\] Here, of course, we use the fact that the Mobius transformations are holomorphic. 

To combat this gauge invariance, we will first choose the $\Omega_k$ in a way which takes uniform advantage of the quantitative inradius estimate from Lemma \ref{inradius}. The resulting isometries relating the $(\mathcal D_k, e^{2\rho_0}g_{euc})$ to the $(\B_1, e^{2v_k}g_{euc})$ then form a sequence of conformal diffeomorphisms whose images converge in a nice way. After normalizing these maps we obtain a limit mapping, and these normalizations provide us with our final choice of gauge for looking at the sequence of constant curvature comparison disks. Before proceeding, we quickly recall for the reader's convenience the necessary complex-analytic framework. 

\subsubsection{Review of Conformal Mappings}

In this section we introduce the concepts that we will need from the theory of conformal mappings in the plane. Primarily, we are concerned with the behavior of a sequence of conformal (i.\ e.\ biholomorphic) mappings $F_k\on\B_1\to\mathcal D_k$, where the domains $\mathcal D_k$ in $\R^2$ are smooth and uniformly convex. \footnote{The subject of conformal, and more broadly harmonic, mappings in the plane is wonderfully rich, and the interested reader should especially look to the books by Pommerenke \cite{Pom} and Duren \cite{Dur}.} In this section it is natural to identify $\R^2$ with $\C$, and to view a map between regions in $\R^2$ as a complex valued mapping in the standard way. 

In our applications, our domains $\mathcal D_k$ will converge to a limiting domain $\mathcal D$. The relevant form of convergence is the following notion, due to C.\ Carath\'eodory: \footnote{This definition often takes slightly different forms. The current one is convenient for us and is found in \cite{Dur}}

\begin{defn}[Kernel Convergence] 
Let $\mathcal{D}_k\subset\C$ be simply connected domains with $0\in\mathcal{D}_k$. The \emph{kernel} of the sequence $\{\mathcal{D}_k\}$ is defined to be $\{0\}$ if $0\notin\mathrm{int}(\bigcap\mathcal{D}_k)$, and otherwise is defined to be the largest domain $\mathcal D\subset\C$ containing $0$ with the property that each point of $\mathcal{D}$ possesses a neighborhood in $\mathcal{D}$ which lies in cofinitely many $\mathcal{D}_k$. We then say that $\mathcal{D}_k\to\mathcal{D}$ (with respect to $0$) in the sense of \emph{kernel convergence} if every subsequence of the collection $\{\mathcal{D}_k\}$ has the same kernel $\mathcal D$. 
\end{defn}

There are a few examples where kernel convergence is easily verified. For example, a sequence of increasing open sets containing $0$ has their union as their kernel. In our case, each of our sets will be of the form $\B_{r_k}\subset\mathcal{D}_k\subset\B_R$, where $r_k\nearrow R$, so again the kernel is simply the union of the $\mathcal D_k$. Another way to phrase the definition of the kernel, which makes the last example clear, is as follows: for each $n\geqslant 1$ let $\mathcal C_n$ denote the connected component of $\mathrm{int}\left(\mathcal{D}_n\cap \mathcal{D}_{n+1}\cap\cdots\right)$ containing $0$. If they exist for all $n$, the union of the $\mathcal C_n$ is defined to be the kernel, and otherwise it is defined to be $\{0\}$. 

The raison d'\^etre for this notion of convergence is the following famous theorem:

\begin{thm}[Carath\'eodory's Convergence Theorem\footnote{Many different conceptions of this result exist, and the one we are using here, as stated in \cite{Dur}, is convenient for our purposes. For other conceptions, the article \cite{Cara} is a great reference.}]\label{cara}
Let $\{\mathcal{D}_k\subset\C\}$ be a sequence of simply connected domains containing $0$, and $\{F_k\on\B_1\to\mathcal D_k\}$ a sequence of bijective conformal mappings with $F_k(0)=0$ and $F'_k(0)>0$. Then the $F_k$ converge to a limit function $F$ uniformly on compact subsets of $\B_1$ if and only if $\mathcal D_k\to \mathcal D\neq\C$ in the kernel sense. 

In the event of convergence with $\mathcal D=\{0\}$, $F\equiv 0$. If we have convergence with $\mathcal D\neq \{0\}$, then $\mathcal D$ is simply connected, $F$ is a bijective conformal mapping of $\B_1$ and $\mathcal D$, and the $\inv F_k$ converge locally uniformly on $\mathcal D$ to $\inv F$. 
\end{thm}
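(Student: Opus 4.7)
The plan is to establish both implications using the classical normal families technology for univalent functions on $\B_1$. The main tools are Koebe's $1/4$ theorem (which gives $\B_{F_k'(0)/4}\subset\mathcal D_k$), the Koebe distortion estimate, the Schwarz lemma, Montel's and Hurwitz's theorems, the argument principle, and uniqueness of the Riemann map with prescribed normalizations at the origin. The case $\mathcal D=\{0\}$ should be treated separately from $\mathcal D\neq\{0\}$.

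For the backward direction with $\mathcal D=\{0\}$, kernel convergence forces $\liminf d(0,\partial\mathcal D_k)=0$, so Koebe's $1/4$ theorem gives $F_k'(0)\leq 4\,d(0,\partial\mathcal D_k)\to 0$, and Koebe distortion then produces $F_k\to 0$ locally uniformly. For the backward direction with $\mathcal D\neq\{0\}$, the first step is to bound $F_k'(0)$ from both sides. A lower bound $F_k'(0)\geq r$ follows by picking $\clo{\B}_r\subset\mathcal D$ (eventually contained in $\mathcal D_k$) and applying Schwarz to $F_k^{-1}|_{\B_r}$. An upper bound follows because if $F_k'(0)\to\infty$ along a subsequence, Koebe's $1/4$ theorem would force the kernel to be all of $\C$, contradicting $\mathcal D\neq\C$. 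These bounds together with Koebe distortion produce uniform local bounds on $\{F_k\}$, hence normality by Montel. Extract any subsequential limit $F_{k_j}\to G$ locally uniformly; since $G'(0)\geq r>0$, Hurwitz ensures $G$ is univalent.

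The crux is to show $G(\B_1)=\mathcal D$, which I would establish by two applications of the argument principle. For $G(\B_1)\subseteq\mathcal D$: given $w_0=G(z_0)$, on a small circle about $z_0$ the functions $F_{k_j}-w_0$ uniformly approximate $G-w_0$, which has a single zero at $z_0$, and Hurwitz puts $w_0$ into $F_{k_j}(\B_1)=\mathcal D_{k_j}$ together with a neighborhood for large $j$. For $\mathcal D\subseteq G(\B_1)$: given $w_0\in\mathcal D$, pick $\B_\eta(w_0)\subset\mathcal D$, so $F_k^{-1}|_{\B_\eta(w_0)}$ is defined for large $k$ and bounded by $1$, hence normal; any subsequential limit $\psi$ is holomorphic with $G\circ\psi=\mathrm{id}$, and the Schwarz bound on $\B_r$ propagated by analytic continuation keeps $\psi(w_0)\in\B_1$. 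Uniqueness of the Riemann map with the given normalizations then forces every convergent subsequence of $\{F_k\}$ to share the same limit, upgrading to full locally uniform convergence. The forward direction proceeds in parallel: if $F_k\to F$ locally uniformly, Hurwitz gives that $F$ is constant or univalent; the case $F\equiv 0$ is incompatible with any nontrivial kernel by Schwarz, while in the univalent case the same argument-principle comparisons identify $F(\B_1)$ with the kernel. Local uniform convergence of $F_k^{-1}\to F^{-1}$ on compacta of $\mathcal D$ then follows from boundedness of the $F_k^{-1}$ by $1$, Montel, and uniqueness of the inverse.

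The main obstacle will be the two-sided identification $G(\B_1)=\mathcal D$ in the backward direction, where the purely geometric kernel notion must be reconciled with the analytic limit $G$. Showing $\mathcal D\subseteq G(\B_1)$ in particular requires preventing $F_{k_j}^{-1}(w_0)$ from escaping to $\partial\B_1$, which is achieved by transferring a Schwarz bound valid on some $\B_r\subset\mathcal D$ to all of $\mathcal D$ via analytic continuation and normality of $\{F_k^{-1}\}$ on each compact piece of $\mathcal D$.
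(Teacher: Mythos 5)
The paper does not prove Carath\'eodory's Convergence Theorem---it is cited as a classical result from Duren \cite{Dur}---so there is no in-text argument to compare against. Your sketch is essentially the standard normal-families proof from that reference (Koebe $1/4$ and distortion, Schwarz, Montel, Hurwitz, uniqueness of the normalized Riemann map), and the logical structure is sound. Two small points worth tightening. First, in the $\mathcal D=\{0\}$ case you write ``$\liminf d(0,\partial\mathcal D_k)=0$'' and then treat it as a full limit; in fact kernel convergence (every subsequence has kernel $\{0\}$) forces $\lim d(0,\partial\mathcal D_k)=0$, which is what you need for $F_k'(0)\to 0$ along the whole sequence rather than merely a subsequence. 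Second, for $\mathcal D\subseteq G(\B_1)$ the restriction of $F_k^{-1}$ to $\B_\eta(w_0)$ alone cannot produce a subsequential limit $\psi$ satisfying $\psi(0)=0$ and $\psi'(0)=1/G'(0)>0$, since $0$ need not lie in $\B_\eta(w_0)$; you must instead run the normal-families argument for $F_k^{-1}$ on a compact connected tube in $\mathcal D$ joining $0$ to $w_0$ (which is eventually contained in $\mathcal D_k$), and then the strong maximum principle forces the non-constant limit $\psi$ to map strictly into $\B_1$, after which $G\circ\psi=\mathrm{id}$ follows from equicontinuity of the $F_{k_j}$. Your phrase ``propagated by analytic continuation'' gestures at this but should be replaced by the explicit tube construction.
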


Lastly, it is worth recalling the Kellogg-Warschawski Theorem, which ensures that the limit maps we get will be smooth up to and on the boundary of our regions of interest:

\begin{thm}[Kellogg-Warschawski\footnote{As stated in \cite{Pom}.}] 
Suppose $F\on\B_1\to\mathcal D$ is a conformal bijection, where the boundary of the domain $\mathcal D$ is a Jordan curve of regularity $C^{m,\alpha}$ for some $m\geqslant 1$ and $\alpha\in(0,1)$. Then $F^{(m)}$ has an $\alpha$-H\"older continuous extension to $\clo\B_1$. In particular, if $\mathcal D$ is smooth then all derivatives of $F$ extend continuously to $\clo\B_1$.  
\end{thm}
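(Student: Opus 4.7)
The plan is to exploit the identification of the argument of $F'$ on $\partial\B_1$ with a reparametrized tangent angle of $\partial\mathcal D$, and then invoke classical boundary regularity for conjugate harmonic functions. The argument is essentially a bootstrap driven by Privalov's theorem on the Hilbert transform.

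The first step is preparatory: since $\partial\mathcal D$ is a Jordan curve, Carath\'eodory's extension theorem gives that $F$ extends to a homeomorphism $\clo\B_1\to\clo{\mathcal D}$. Since $F$ is biholomorphic, $F'$ is nonvanishing on $\B_1$, so we may choose a holomorphic branch of $g(z)\defeq\log F'(z)$ on $\B_1$. Now parametrize $\partial\mathcal D$ by arc length with position vector $\gamma(s)$, and let $\tau(s)\defeq\arg\gamma'(s)$ denote the tangent angle; the hypothesis that $\partial\mathcal D\in C^{m,\alpha}$ translates to $\tau\in C^{m-1,\alpha}$ as a function of arc length. Writing $\psi(\theta)\defeq F(e^{i\theta})$ and differentiating, we obtain $\psi'(\theta)=ie^{i\theta}F'(e^{i\theta})$, whence
\[
\arg F'(e^{i\theta})=\tau(s(\theta))-\theta-\frac{\pi}{2},\qquad s'(\theta)=|F'(e^{i\theta})|,
\]
where $s(\theta)$ is the arc length of $\psi$ from some fixed basepoint. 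This formula is the bridge between the boundary geometry of $\mathcal D$ and the boundary values of the imaginary part of $g$.

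The second step is the bootstrap. The function $g$ is holomorphic on $\B_1$ with $\mathrm{Im}\,g=\arg F'$ on $\partial\B_1$. By the Privalov--Zygmund theorem, for any integer $k\geq 0$ and $\alpha\in(0,1)$, the harmonic conjugation operator on the circle preserves $C^{k,\alpha}$; equivalently, if a holomorphic function on $\B_1$ has imaginary part with $C^{k,\alpha}$ boundary values, then the whole function extends to $\clo\B_1$ in the class $C^{k,\alpha}$. Starting from Carath\'eodory one has $F$ continuous up to $\partial\B_1$, so $s(\theta)$ is continuous, so $\arg F'|_{\partial\B_1}\in C^0$; Privalov promotes this to $g\in C^0(\clo\B_1)$, hence $F'\in C^0(\clo\B_1)$, hence $s\in C^1$, hence $\tau\circ s\in C^{1,\alpha}$ (using $\tau\in C^{m-1,\alpha}$ with $m\geq 1$), and so on. Iterating, each pass through Privalov lifts the regularity of $g$ by one derivative (up to the ceiling set by $\tau$), yielding ultimately $g\in C^{m-1,\alpha}(\clo\B_1)$.

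Finally, $F'=\exp(g)\in C^{m-1,\alpha}(\clo\B_1)$, and integrating along radii we conclude that $F\in C^{m,\alpha}(\clo\B_1)$, i.e.\ $F^{(m)}$ extends $\alpha$-H\"older continuously to $\clo\B_1$. The smooth case follows by applying the result for every $m$. The main obstacle in making this rigorous is the circular dependence between the arc length reparametrization $s(\theta)$ and the regularity of $|F'|$: one must carefully interlace the regularity of $\tau\circ s$ on $\partial\B_1$ with that of $g$ by incrementing in steps of one H\"older class, and justify that the H\"older norms do not blow up along the induction. This is where the bulk of the technical work lies in Warschawski's original argument, and where careful estimates for the Hilbert transform on H\"older classes become essential.
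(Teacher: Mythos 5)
The paper does not prove this theorem; it is quoted verbatim from Pommerenke's book \cite{Pom} (his Theorem 3.6), so there is no internal proof to compare against. Your sketch does reproduce the right classical strategy: pass to $g=\log F'$, express $\arg F'$ on the circle through the tangent angle of $\partial\mathcal D$ composed with the arc-length reparametrization $s(\theta)$, and bootstrap regularity of $g$ using the boundedness of the Hilbert transform on H\"older classes (Privalov). This is essentially how Warschawski's theorem is organized in \cite{Pom}.

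There is, however, a genuine gap in the base case of your bootstrap, and it is not merely the bookkeeping issue you flag at the end. You write that Carath\'eodory gives $F\in C^0(\clo{\B}_1)$, hence $s(\theta)\in C^0$, hence $\arg F'|_{\partial\B_1}\in C^0$, and then ``Privalov promotes this to $g\in C^0(\clo{\B}_1)$.'' Two things go wrong here. First, the formula $\arg F'(e^{i\theta})=\tau(s(\theta))-\theta-\pi/2$ describes the boundary trace of $\mathrm{Im}\,g$ \emph{provided one already knows} that $F'$ (equivalently $g$) has boundary values; continuity of $s$ alone does not grant this. One needs, e.g., rectifiability of $\partial\mathcal D$ plus the F.\ and M.\ Riesz theorem to put $F'$ in $H^1$ and obtain nontangential boundary values a.e., before the formula can be read as giving the boundary data of $g$. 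Second, and more fatally, the harmonic conjugation operator does \emph{not} preserve $C^0$: Privalov's theorem is an assertion about $C^{0,\alpha}$ for $0<\alpha<1$ (and fails at both endpoints $\alpha=0$ and $\alpha=1$). So you cannot start the induction from mere continuity of the boundary data. In the actual argument the base case is Kellogg's theorem proper, which uses the Dini-smoothness of $\tau$ to produce quantitative oscillation bounds on $\arg F'$ directly (rather than passing through $C^0$ conjugation), and only after $F'$ is shown continuous and nonvanishing on $\clo{\B}_1$ does one know $s\in C^1$ and can begin the H\"older bootstrap via Privalov. If you insert that base step, the rest of your induction is sound and matches the standard treatment.
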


\subsubsection{Fixing the Gauge}

The goal of this section is to prove the following proposition. Recall that we use the notation $(f)_\phi\defeq f\circ \phi+\log|\phi'|$, and that if $\phi\in\mathrm{Mob}(\B_1)$ is a Mobius transformation of $\B_1$, then $(\B_1,e^{2(f)_\phi}g_{euc})$ is isometric to $(\B_1, e^{2f}g_{euc})$ under the action of pullback by $\phi$. 

\begin{prop}
Given $c>0$, $\varepsilon_k\searrow 0$ and smooth functions $v_k\on\clo\B_1\to\R$ with \[\begin{cases} -\Delta v_k=e^{2v_k} & \text{on }\B_1  \\ \partial_n v_k+1\geqslant ce^{u_k} & \text{on }\mathbb S_1 \\\int_{\mathbb S_1} e^{v_k}\geqslant 2\pi-\varepsilon_k,\end{cases}\] we can find a sequence of Mobius transformations $\phi_k\in\mathrm{Conf}(\B_1)$ such that $(v_k)_{\phi_k}\to\rho_c$ in $C^m_{loc}(\B_1)$ for any $m\geqslant 0$.  
\end{prop}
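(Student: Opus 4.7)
The plan is to convert the problem into one about planar conformal maps with pinned boundary behavior, and to conclude via Carath\'eodory's Convergence Theorem. First I would use the freedom in isometrically realizing each $(\clo\B_1, e^{2v_k}g_{euc})$ as a smooth uniformly $c$-convex domain $\Omega_k \subset \mathbb S^2$, choosing the representative so that the inball of $\Omega_k$ is centered at the south pole $S=-e_3$. By Lemma \ref{inradius}, the inradius $r_{in,k}\to \inv\cot(c)$ while the outradius $r_{out,k}\leq \inv\cot(c)$; since the inball sits inside the outball, the distance between their centers is at most $r_{out,k}-r_{in,k}\to 0$, so the outball too is nearly centered at $S$. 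Stereographic projection from the north pole then sends $\Omega_k$ isometrically onto a smooth domain $\mathcal D_k\subset\R^2$ (equipped with $e^{2\rho_0}g_{euc}$) satisfying
\[\B_{\sigma_k}\subset \mathcal D_k\subset \clo\B_{R_c+\tau_k},\qquad \sigma_k\nearrow R_c,\quad \tau_k\searrow 0.\]
These inclusions immediately yield kernel convergence $\mathcal D_k\to \B_{R_c}$ with respect to $0$.

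Next, since $e^{2v_k}g_{euc}$ and $e^{2\rho_0}g_{euc}$ are both conformal to the flat metric, the isometry between them is (after possibly conjugating to fix orientation) realized by a conformal bijection $\Psi_k\on\B_1\to \mathcal D_k$ with $v_k=\rho_0\circ \Psi_k+\log|\Psi_k'|=(\rho_0)_{\Psi_k}$. I would then pick a Mobius transformation $\phi_k\in\mathrm{Conf}(\B_1)$ so that $F_k\defeq \Psi_k\circ\phi_k$ is normalized by $F_k(0)=0$ and $F_k'(0)>0$; this precisely fixes the gauge. By Carath\'eodory's Theorem \ref{cara}, $F_k$ converges locally uniformly on $\B_1$ to a conformal bijection $F\on\B_1\to \B_{R_c}$ with the same normalization, and the uniqueness part of the Riemann Mapping Theorem (equivalently, the Schwarz Lemma applied to $F/R_c$) identifies $F$ as the dilation $F(z)=R_c z$.

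A direct chain-rule computation then shows
\[(v_k)_{\phi_k}=v_k\circ\phi_k+\log|\phi_k'|=\rho_0\circ F_k+\log|F_k'|=(\rho_0)_{F_k}.\]
Because the $F_k$ are holomorphic, their local uniform convergence upgrades automatically to $C^\infty_{loc}$ convergence by the Weierstrass convergence theorem, and $|F_k'|$ is bounded uniformly away from zero on compacts by the limit $|F'|\equiv R_c$. It follows that $\rho_0\circ F_k\to \rho_0(R_c\cdot)$ and $\log|F_k'|\to \log R_c$ in $C^m_{loc}(\B_1)$ for every $m\geq 0$, whose sum is exactly $\rho_c$, as required.

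The hard part is the first step: arranging the isometric realization $\Omega_k\subset \mathbb S^2$ so that the planar images $\mathcal D_k$ kernel-converge to a non-degenerate disk. Without simultaneous quantitative control of both $r_{in,k}$ and $r_{out,k}$, the domains $\mathcal D_k$ could collapse to a point or escape to infinity, and Carath\'eodory's theorem would either produce a trivial limit or fail to apply altogether. It is exactly Lemma \ref{inradius} combined with the centering observation which averts this degeneration; once this gauge is in place the remaining complex-analytic machinery is straightforward.
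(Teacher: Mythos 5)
Your proposal is correct and follows essentially the same strategy as the paper: use Lemma~\ref{inradius} to center the realization $\Omega_k\subset\mathbb S^2$ near the south pole, deduce kernel convergence of the stereographic images $\mathcal D_k$ to $\B_{R_c}$, normalize the Riemann maps and apply Carath\'eodory's theorem, then upgrade to $C^\infty_{\mathrm{loc}}$ convergence by standard complex analysis. The only differences are cosmetic. You center the \emph{inball} at $S$ and then check the outball center is within $r_{out,k}-r_{in,k}\to 0$ of $S$ (correct, by $\B(p_{in},r_{in})\subset\B(p_{out},r_{out})$ forcing $d(p_{in},p_{out})\leq r_{out}-r_{in}$), yielding $\B_{\sigma_k}\subset\mathcal D_k\subset\clo\B_{R_c+\tau_k}$; the paper instead centers the \emph{outball} at $S$, giving the cleaner containment $\mathcal D_k\subset\B_{R_c}$ outright with the inclusion $\B_{\ell_k}\subset\mathcal D_k$ coming from the inradius estimate. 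And you identify the Carath\'eodory limit as $R_c\cdot\mathrm{Id}$ directly via the Schwarz lemma, whereas the paper first writes the limit as $R_c\cdot\psi$ for some $\psi\in\mathrm{Conf}(\B_1)$ and then composes with $\psi^{-1}$; your route is in fact slightly more economical, since the normalization $F_k(0)=0$, $F_k'(0)>0$ persists in the limit and pins the map down immediately. Both are valid; no gaps.
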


Geometrically, of course, we are showing that the constant curvature comparison disks $(\B_1, e^{2v_k}g_{euc})$ converge in the $C^m_{loc}$ Cheeger-Gromov sense to the model space $(\B_1, e^{2\rho_c}g_{euc})$ for every $m\geqslant 0$. 

To begin proving this, we make an initial choice of gauge by conveniently choosing the domains $\Omega_k\subset\mathbb S^2$ which isometrically realize our comparison disks $(\B_1, e^{2v_k}g_{euc})$. We consider round $\mathbb S^2$ to be isometrically embedded in $\R^3$ in the standard way, and we let $\Phi\on\mathbb S^2\setminus N\to\R^2$ denote stereographic projection from the north pole $N$. Now, choose $\Omega_k$ to lie in the southern hemisphere of $\mathbb S^2$, containing the south pole $S$. Recalling that $\Omega_k$ has an outball of radius $\inv\cot(c)$ by Lemma \ref{inradius}, we can choose $\Omega_k$ so that its outball is centered at $S$. By Lemma \ref{inradius}, $\Omega_k$ also has an inball of radius $\inv\cot(c+\Psi(\varepsilon_k))$. It thus follows that with $\Omega_k$ chosen in this way, the geodesic disk of radius $2\inv\cot(c+\Psi(\varepsilon_k))-\inv\cot(c)$ centered at $S$ is contained in $\Omega_k$. As $k\to\infty$ and $\varepsilon_k\searrow 0$, this disk expands up to the fixed outball of radius $\inv\cot(c)$ for all the $\Omega_k$. 

Under $\Phi$, $\Omega_k$ is isometric to a disk of the form $(\mathcal D_k, e^{2\rho_0}g_{euc})$, where $\mathcal D_k$ is a smooth convex domain in $\R^2$ containing $0$ and contained in $\B_1$. More precisely, the $\mathcal D_k$ all lie within $\B_{R_c}$, and each contains the ball $\B_{\ell_k}$, where \[\ell_k\defeq \frac{\sin[2\inv\cot(c+\Psi(\varepsilon_k))-\inv\cot(c)]}{1+\cos[2\inv\cot(c+\Psi(\varepsilon_k))-\inv\cot(c)]}\nearrow R_c.\] Consequently, we can easily see that $\mathcal D_k\to\B_{R_c}$ in the kernel sense: we have $\B_{\ell_N}\subset\mathcal C_N\defeq\mathrm{int}(\mathcal D_N\cap \mathcal D_{N+1}\cap\cdots)\subset\B_{R_c}$, and so the kernel, which we recall as being the union of $\mathcal C_N$, is exactly $\B_{R_c}$. Denote by $F_k\on\B_1\to\mathcal D_k$ the smooth map which provides an isometry between our two disks $(\mathcal D_k, e^{2\rho_0}g_{euc})$ and $(\B_1, e^{2v_k}g_{euc})$. 

\begin{clm}
$F_k$ is a conformal diffeomorphism of $\B_1$ and $\mathcal D_k$. 
\end{clm}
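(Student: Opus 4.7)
The plan is to unwind what it means for $F_k$ to be an isometry between two metrics that are each pointwise conformal to $g_{euc}$. By construction $F_k$ is a Riemannian isometry $(\B_1, e^{2v_k}g_{euc})\to(\mathcal{D}_k, e^{2\rho_0}g_{euc})$, so $F_k^*(e^{2\rho_0}g_{euc})=e^{2v_k}g_{euc}$. Pulling the scalar factor through the pullback, this rearranges to
$$F_k^{*}g_{euc} = e^{2(v_k-\rho_0\circ F_k)}\,g_{euc}.$$
Thus the pullback of the Euclidean metric by $F_k$ is a pointwise positive scalar multiple of itself, which is precisely the statement that the differential $DF_k(p)$ is a similarity (a positive scaling composed with an orthogonal transformation) at every $p\in\B_1$.

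Next I would invoke the standard complex-analytic fact that a smooth map between planar domains whose differential is pointwise a similarity is either holomorphic or anti-holomorphic: the $2\times 2$ matrix of $DF_k$ is forced into one of the two forms corresponding to the Cauchy-Riemann or anti-Cauchy-Riemann equations. Because $F_k$ is already a smooth bijection with smooth inverse (being a Riemannian isometry between smooth surfaces), this step already establishes angle-preservation and hence $F_k$ being a conformal diffeomorphism in the broad Riemannian sense.

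To upgrade to biholomorphicity, which is needed for the subsequent appeal to Carath\'eodory's convergence theorem, I would fix orientations so as to rule out the anti-holomorphic alternative: equip $\Omega_k\subset\mathbb{S}^2$ with the orientation induced by the outward normal in $\R^3$, orient $\B_1$ so that the isometric identification with $\Omega_k$ is orientation-preserving, and choose the stereographic projection convention (replacing $\Phi$ by its composition with complex conjugation if necessary) so that $\Phi$ is orientation-preserving onto its image. Then $DF_k$ lies in the Cauchy-Riemann branch and $F_k$ is holomorphic, hence a biholomorphism.

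The argument is essentially formal, with no serious analytic content. The only care needed is the orientation bookkeeping to select the holomorphic rather than anti-holomorphic alternative; once that is done, the conclusion follows directly from the pointwise form of the isometry condition.
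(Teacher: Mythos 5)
Your proof is correct and follows essentially the same route as the paper: both unwind the isometry condition $F_k^*(e^{2\rho_0}g_{euc})=e^{2v_k}g_{euc}$ to see that $F_k^*g_{euc}$ is a positive scalar multiple of $g_{euc}$, i.e.\ that $DF_k$ is pointwise a similarity, which is the definition of a conformal map. You go a bit further by spelling out the orientation bookkeeping needed to select the holomorphic (rather than anti-holomorphic) branch, a point the paper addresses only implicitly in the sentence following the claim via the Cauchy--Riemann equations.
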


\begin{proof}
Indeed, $F_k$ an isometry means that $e^{2v_k}g_{euc}=F^*_k(e^{2\rho_0}g_{euc})=e^{2\rho_0\circ F_k}F^*_kg_{euc}$. Expanding out, we find that\[(e^{2v_k}g_{euc})_{ij}=\begin{cases}
e^{2\rho_0\circ F_k}|\partial_1 F_k|^2 & i=j=1\\
e^{2\rho_0\circ F_k}\langle \partial_i F_k, \partial_j F_k\rangle & i\neq j \\
e^{2\rho_0\circ F_k}|\partial_2 F_k|^2 & i=j=2.\\
\end{cases}\] Thus, it follows that $F_k$ is a conformal diffeomorphism of $\B_1$ and $\mathcal D_k$. 
\end{proof}

One easily sees from the Cauchy-Riemann Equations that $F_k$ is a biholomorphic mapping and, if we use the complex analytic notation $(-)'$ for differentiation, $|\partial_i F_k|^2=|F'_k|^2$ for $i=1,2$. Thus,

\begin{cor}
Under the isometry $F_k$, we have $v_k=\rho_0\circ F_k+\log|F'_k|$. 
\end{cor}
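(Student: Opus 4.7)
The plan is to derive this identity by directly unpacking the isometry condition and combining it with conformality of $F_k$, which was just established in the preceding claim. The heart of the matter is simply keeping track of how a conformal factor transforms under pullback by a conformal map, which is standard but worth writing out to fix conventions.

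First, I would recall that saying $F_k\on(\mathbb{B}_1, e^{2v_k}g_{euc})\to(\mathcal{D}_k,e^{2\rho_0}g_{euc})$ is an isometry is exactly the tensor identity
\[
F_k^*(e^{2\rho_0}g_{euc}) = e^{2v_k}g_{euc}.
\]
Pulling the scalar $e^{2\rho_0}$ out of the pullback (it is a function pulled back by composition), the left-hand side becomes $e^{2\rho_0\circ F_k}\,F_k^*g_{euc}$.

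Next I would invoke the preceding claim: $F_k$ is a biholomorphism $\mathbb{B}_1\to\mathcal{D}_k$, and for any holomorphic map the Cauchy-Riemann equations give $F_k^*g_{euc} = |F_k'|^2 g_{euc}$. This is immediate from the component computation already displayed in the proof of the claim, since $|\partial_1 F_k|^2 = |\partial_2 F_k|^2 = |F_k'|^2$ and the off-diagonal inner product $\langle \partial_1 F_k,\partial_2 F_k\rangle$ vanishes by the Cauchy-Riemann equations. Substituting yields
\[
e^{2v_k}g_{euc} = e^{2\rho_0\circ F_k}|F_k'|^2\,g_{euc}.
\]

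Finally, comparing conformal factors (both metrics are manifestly pointwise-conformal to $g_{euc}$ with everywhere positive factor, since $F_k$ is a diffeomorphism and so $|F_k'|$ never vanishes) gives $e^{2v_k} = e^{2(\rho_0\circ F_k)}\cdot e^{2\log|F_k'|}$, and taking logarithms produces
\[
v_k = \rho_0\circ F_k + \log|F_k'|
\]
as desired. There is no real obstacle here; the only subtlety worth flagging is that the factorization $F_k^*g_{euc}=|F_k'|^2 g_{euc}$ uses holomorphicity (not merely conformality in the real sense), which is guaranteed because an orientation-preserving conformal diffeomorphism of planar domains is biholomorphic — and orientation can be arranged once at the outset.
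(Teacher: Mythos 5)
Your proof is correct and follows essentially the same route as the paper: unwind the isometry condition $F_k^*(e^{2\rho_0}g_{euc})=e^{2v_k}g_{euc}$, use holomorphicity of $F_k$ (via Cauchy-Riemann, as established in the preceding claim) to write $F_k^*g_{euc}=|F_k'|^2 g_{euc}$, and compare conformal factors. Your explicit flag that one must fix orientations so that the conformal map $F_k$ is genuinely holomorphic rather than antiholomorphic is a reasonable point of care that the paper leaves implicit.
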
 

Our task now is to investigate the convergence of the maps $F_k\on\B_1\to\mathcal D_k$, and it is here where our specific choices concerning the domains $\Omega_k$ help. The main result of this section boils down to the following: 

\begin{prop}
Given $\varepsilon_k\searrow 0$, $v_k$, and $F_k$ all as above, we can find Mobius transformations $\phi_k\in\mathrm{Conf}(\B_1)$ such that the maps $\hat F_k\defeq F_k\circ\phi_k\to R_c\cdot\mathrm{Id}$ in $C^m_{loc}(\B_1)$ for any $m\geqslant 0$. In particular, $(v_k)_{\phi_k}\to\rho_c$ in $C^k_{loc}(\B_1)$ for any $k\geqslant 0$.  
\end{prop}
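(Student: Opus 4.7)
The plan is a direct application of Carath\'eodory's Convergence Theorem (Theorem \ref{cara}) once we remove the Mobius gauge freedom in the $F_k$ by an appropriate normalization. First, for each $k$, I would define $p_k \defeq \inv{F_k}(0) \in \B_1$ and build $\phi_k \in \mathrm{Conf}(\B_1)$ as the composition of the standard automorphism of $\B_1$ sending $0 \mapsto p_k$ with a rotation, the rotation being chosen so that $\hat F_k \defeq F_k \circ \phi_k$ satisfies the two canonical normalizations $\hat F_k(0) = 0$ and $\hat F_k'(0) > 0$. The two real parameters available in $\mathrm{Conf}(\B_1)/S^1$ are exactly what is needed to impose these conditions, and $\hat F_k$ remains a biholomorphism of $\B_1$ and $\mathcal D_k$.

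Since the preparatory discussion has already established $\mathcal D_k \to \B_{R_c}$ in the kernel sense with respect to $0$, Theorem \ref{cara} would yield $\hat F_k \to \hat F$ uniformly on compact subsets of $\B_1$, where $\hat F \on \B_1 \to \B_{R_c}$ is a bijective conformal map with $\hat F(0) = 0$ and $\hat F'(0) > 0$. By the standard uniqueness for normalized Riemann maps (a two-sided Schwarz Lemma applied to $\hat F / R_c \on \B_1 \to \B_1$), the limit is forced to be $\hat F(z) = R_c z$, i.e., $R_c \cdot \mathrm{Id}$. Upgrading local uniform convergence to $C^m_{loc}(\B_1)$ convergence for every $m \geq 0$ is then automatic, since the $\hat F_k$ are holomorphic and local uniform convergence of holomorphic functions yields local uniform convergence of all derivatives via the Cauchy integral formula.

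To recover the factor statement, I would combine the gauge action $(v_k)_{\phi_k} = v_k \circ \phi_k + \log|\phi_k'|$ with the identity $v_k = \rho_0 \circ F_k + \log|F_k'|$ from the preceding corollary, and invoke the chain rule $(F_k \circ \phi_k)' = (F_k' \circ \phi_k)\cdot \phi_k'$ to collapse everything into
\[(v_k)_{\phi_k} = \rho_0 \circ \hat F_k + \log|\hat F_k'|.\]
Since $\hat F_k \to R_c \cdot \mathrm{Id}$ in $C^m_{loc}$ with $\hat F_k'$ eventually bounded away from $0$ uniformly on compacta, both terms pass to the limit smoothly: $\rho_0 \circ \hat F_k \to \rho_0(R_c\,\cdot)$ and $\log|\hat F_k'| \to \log R_c$. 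A direct unpacking of the definition of $\rho_c$ then shows that the sum $\rho_0(R_c\,\cdot) + \log R_c$ is exactly $\rho_c$.

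The genuine difficulty does not lie in the proposition itself but in the preceding geometric setup: getting the kernel of $\{\mathcal D_k\}$ to be $\B_{R_c}$ rather than $\{0\}$ required the quantitative inradius estimate of Lemma \ref{inradius} together with the careful positioning of the outballs of the $\Omega_k$ at the south pole $S$. With that preparation in hand, the proposition really is just Carath\'eodory's theorem combined with the uniqueness of the normalized Riemann map, plus the purely formal unraveling of the gauge action on the conformal factors.
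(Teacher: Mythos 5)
Your proof is correct and takes essentially the same route as the paper: normalize the $F_k$ by a Mobius transformation so the Carath\'eodory Convergence Theorem applies, bootstrap $C^0_{loc}$ to $C^m_{loc}$ via Cauchy estimates, and unwind the gauge action $(v_k)_{\phi_k}=\rho_0\circ\hat F_k+\log|\hat F_k'|$. Your direct invocation of the Schwarz-lemma uniqueness of the normalized Riemann map to conclude $\hat F = R_c\cdot\mathrm{Id}$ actually streamlines the paper's argument slightly, which instead obtains a limit $R_c\cdot\psi$ and then composes with $\psi^{-1}$ as a further correction; the normalization $\tilde F_k(0)=0$, $\tilde F_k'(0)>0$ already forces $\psi=\mathrm{Id}$, so the two proofs coincide in substance.
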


\begin{proof}
Choose $\eta_k\in\mathrm{Conf}(\B_1)$ such that $\tilde F_k\defeq F_k\circ\inv\eta_k$ satisfies the normalization conditions $\tilde F_k(0)=0$ and $\tilde F_k'(0)>0$. Then $\tilde F_k\on\B_1\to\mathcal D_k$ satisfies all the conditions of the Carath\'eodory Convergence Theorem \ref{cara}, so we have a conformal diffeomorphism $\tilde F\on\B_1\to\B_{R_c}$ to which the $\tilde F_k$ limit in $C^0_{loc}(\B_1)$. It follows that $\tilde F$ is simply a Mobius transformation $\psi\in\mathrm{Conf}(\B_1)$ scaled by the factor $R_c$, so $\tilde F_k\to R_c\cdot\psi$.

Now, let $\hat F_k\defeq\tilde F_k\circ\inv\psi=F_k\circ\inv\eta_k\circ\inv\psi\defeq F_k\circ\phi_k$. By the Cauchy Integral Formula, we see that $\hat F_k\to R_c\cdot\mathrm{Id}$ in $C^m_{loc}(\B_1)$ for any $m\geqslant 0$. To see that the $\phi_k$ provide a good change of gauge, we compute the pullback of $(\mathcal D_k, e^{2\rho_0}g_{euc})$ by $\hat F_k$: \begin{equation*}\begin{split}
(\B_1, e^{2(\rho_0)_{\hat F_k}}g_{euc})=\hat F^*_k (\mathcal D_k, e^{2\rho_0}g_{euc})&=\phi_k^* F^*_k(\mathcal D_k, e^{2\rho_0}g_{euc})\\
&=\phi_k^*(\B_1, e^{2v_k}g_{euc})=(\B_1, e^{2(v_k)_{\phi_k}}g_{euc}).
\end{split}
\end{equation*}
Therefore, \[(v_k)_{\phi_k}=(\rho_0)_{\hat F_k}(x)=\rho_0\circ \hat F_k(x)+\log|\hat F_k'(x)|\to \rho_0(R_c\cdot x)+\log R_c=\rho_c(x)\] in $C^m_{loc}(\B_1)$ for any $m\geqslant 0$, since $\rho_0$ is smooth on $\clo\B_1$.
\end{proof}

Now, recall that since $\phi_k\in\mathrm{Conf}(\B_1)$ the disk $(\B_1, e^{2(v_k)_{\phi_k}}g_{euc})$ is isometric to $(\B_1, e^{2v_k}g_{euc})$. Therefore, $\tilde v_k\defeq (v_k)_{\phi_k}$ satisfies all of the same curvature conditions that $v_k$ does, namely, \[\begin{cases} -\Delta \tilde v_k=e^{2\tilde v_k} & \text{on }\B_1  \\ \partial_n \tilde v_k+1\geqslant ce^{\tilde v_k} & \text{on }\mathbb S_1. \end{cases}\] Similarly, our original disk $(\B_1, e^{2u_k}g_{euc})$ is isometric to $(\B_1, e^{2(u_k)_{\phi_k}}g_{euc})$ so the functions $\tilde u_k\defeq (u_k)_{\phi_k}$ similarly satisfy all the same curvature conditions: \[\begin{cases} -\Delta \tilde u_k=K_{\tilde u_k} e^{2\tilde u_k}\geqslant e^{2\tilde u_k} & \text{on }\B_1  \\ \partial_n \tilde u_k+1\geqslant ce^{\tilde u_k} & \text{on }\mathbb S_1.\end{cases}\] Of course, the relations $\tilde u_k\geqslant \tilde v_k$ on $\B_1$, $\tilde u_k=\tilde v_k$ on $\mathbb S^1$, and \[L_{\tilde u_k}(\mathbb S^1)=\int_{\mathbb S^1}e^{\tilde u_k}=\int_{\mathbb S^1}e^{\tilde v_k}\geqslant 2\pi(1-\varepsilon_k)/\sqrt{1+c^2}\] continue to hold. Therefore, if $\tilde w_k\defeq\tilde u_k-\tilde v_k$, all of the estimates of the previous section for $w_k$ apply also to $\tilde w_k$. Our gauge fixing process thus consists of replacing $u_k$, $v_k$, and $w_k$ with their counterparts $\tilde u_k$, $\tilde v_k$, and $\tilde w_k$. These conformal factors (and their difference, respectively) all represent the same geometric objects up to isometry with the same estimates, but with the property that the new constant curvature comparison factors $\tilde v_k$ enjoy good local convergence to the model conformal factor $\rho_c$.

\emph{From here on out, for clarity of notation we will denote our gauge-corrected conformal factors and their difference with the original undecorated quantities $u_k$, $v_k$, and $w_k$, implicitly assuming a-priori correction by the maps $\phi_k$.}

\subsection{Producing a Metric Space Limit}\label{section X}

We now have the estimates that we need to show that the disks $(\B_1, e^{2u_k}g_{euc})$ converge to a limit. First, let us collect what we have shown so far. For $\lambda\geqslant 1$ we have \[e^{\lambda u_k}-e^{\lambda\rho_c}=\left(e^{\lambda v_k}-e^{\lambda \rho_c}\right)e^{\lambda w_k}+\left(e^{\lambda w_k}-1\right)e^{\lambda\rho_c}.\] By the results of Sections \ref{section w} and \ref{section v}, the pair of product terms on the right hand side tends to zero in $W^{1,p}_{loc}(\B_1)$ for every $p\in[1,2)$ and in $L^q_{loc}(\B_1)$ for every $q\in [1,\infty)$, as can be readily seen by H\"older's Inequality and the Sobolev Embedding Theorem. In particular, we get strong local Sobolev convergence of the metrics $e^{2u_k}g_{euc}$ to the model metric $e^{2\rho_c}g_{euc}$. To bootstrap this analytic convergence up to geometric convergence, we begin by showing that the distance functions of the $e^{2u_k}g_{euc}$ subconverge to a semi-definite distance function on $\B_1$, which we will then show is the distance function of the model metric. 

\begin{prop}\label{distancelim}
There exists a continuous function $d_\infty$ on $\B_1\times\B_1$ such that, upto subsequence, $d_{u_k}\to d_\infty$ in $C^0_{loc}(\B_1\times\B_1)$. 
\end{prop}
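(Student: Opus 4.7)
I would prove Proposition \ref{distancelim} by a standard Arzel\`a--Ascoli argument on the exhaustion of $\B_1\times\B_1$ by $\clo\B_{r_j}\times\clo\B_{r_j}$ with $r_j\nearrow 1$, extracting a diagonal subsequence whose limit $d_\infty$ is automatically continuous as a locally uniform limit of continuous functions. The two conditions to verify on each $\clo\B_r$ are uniform boundedness and uniform equicontinuity of the family $\{d_{u_k}\}$. Both would follow from a single quantitative bound
\[
d_{u_k}(x,y)\leqslant C(r,q)\,|x-y|^{1-2/q},\qquad q>2,\ x,y\in\clo\B_r,
\]
with $C$ independent of $k$: equicontinuity is immediate, and uniform boundedness follows by chaining the estimate, at a small but fixed scale $\delta$, along the Euclidean segment joining $x,y\in\clo\B_r$ (whose length is at most $2r$).

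The prerequisite for this key estimate is a uniform $L^q(\clo\B_r)$ bound on $e^{u_k}$ for every $r<1$ and every $q\in[1,\infty)$. Writing $u_k=v_k+w_k$, the $C^m_{loc}$ convergence $v_k\to\rho_c$ from Section \ref{section v} makes $e^{v_k}$ uniformly bounded in $L^\infty(\clo\B_r)$, while Lemma \ref{BM 1} applied with $\lambda=q$ makes $e^{w_k}$ uniformly bounded in $L^q(\B_1)$; H\"older's inequality combines these into the desired $L^q$-bound on $e^{u_k}$.

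For the H\"older estimate itself, the obstacle is that the absence of an $L^\infty$ bound on $u_k$ rules out integrating $e^{u_k}$ directly along the Euclidean segment from $x$ to $y$. The idea is to average the length of a piecewise-linear path over its intermediate point. Setting $m=(x+y)/2$, $\rho=|x-y|$, and $A=\B_{\rho/4}(m)$, for every $z\in A$ the path $x\to z\to y$ gives $d_{u_k}(x,y)\leqslant L_{g_k}([x,z])+L_{g_k}([z,y])$, so averaging yields
\[
d_{u_k}(x,y)\leqslant\frac{1}{|A|}\int_A\Bigl(\int_{[x,z]}\!e^{u_k}\,d\sigma+\int_{[z,y]}\!e^{u_k}\,d\sigma\Bigr)\,dz.
\]
For the first term, the change of variable $w=x+t(z-x)$ for $t\in[0,1]$ followed by Fubini rewrites the double integral as $\int_0^1 t^{-3}\int_{B_t}e^{u_k}(w)|w-x|\,dw\,dt$, where $B_t=\B_{t\rho/4}(x+t(m-x))$. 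H\"older's inequality against the weight $|w-x|\mathbf{1}_{B_t}$ in $L^{q'}$, together with the uniform $L^q$-bound on $e^{u_k}$, controls this by $C\rho^{1+2/q'}\|e^{u_k}\|_{L^q}$; dividing by $|A|\asymp\rho^2$ yields $C\rho^{1-2/q}\|e^{u_k}\|_{L^q}$. The integrability in $t$ of $t^{-2+2/q'}$ near $0$ is exactly what forces $q>2$.

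The main obstacle is this very absence of $L^\infty$ control on $u_k$: bubbling of the conformal factor could a priori make the Euclidean-length segment between two nearby points have arbitrarily large $g_k$-length, and the averaging device over $A$ is precisely what converts the available $L^q$-integrability of $e^{u_k}$ into a pointwise H\"older distance estimate. Once both ingredients are in hand, the proposition follows by the routine Arzel\`a--Ascoli plus diagonal extraction described above.
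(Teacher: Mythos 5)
Your proof is correct, but it takes a genuinely different route from the paper's. The paper observes that $d_{u_k}(\cdot,\cdot)$ satisfies the eikonal equation away from the cut locus, so $\lvert\nabla_x d_{u_k}(x,y)\rvert$ and $\lvert\nabla_y d_{u_k}(x,y)\rvert$ are controlled pointwise a.e.\ by a power of $e^{u_k}$; feeding the uniform $L^p(\clo\B_r)$ bounds on $e^{u_k}$ into this gives a uniform $W^{1,p}(\clo\B_r\times\clo\B_r)$ bound on the two-variable function $d_{u_k}$, and Morrey embedding (on the four-dimensional product) then yields a uniform $C^\alpha$ bound, from which Arzel\`a--Ascoli and a diagonal argument finish. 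Your approach bypasses Sobolev theory on the product entirely: you derive the same kind of H\"older modulus $d_{u_k}(x,y)\leqslant C(r,q)\lvert x-y\rvert^{1-2/q}$ directly, by averaging the $g_k$-length of piecewise-linear paths $x\to z\to y$ over $z$ in a ball of radius comparable to $\lvert x-y\rvert$, converting the $L^q$-integrability of $e^{u_k}$ into a pointwise estimate via Fubini, a change of variables, and H\"older against a geometric weight. The computation checks out (the Jacobian bookkeeping, the constraint $q>2$, and the final exponent $1-2/q$ after dividing by $\lvert A\rvert\asymp\rho^2$ are all right), and the prerequisite $L^q$ bounds follow from Lemma \ref{BM 1} and the $C^m_{loc}$ convergence $v_k\to\rho_c$, exactly as the paper needs them. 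One small thing worth flagging: you should take $A\subset\clo\B_{r'}$ for some $r<r'<1$ (which holds automatically once $\lvert x-y\rvert$ is small relative to $r'-r$, and the remaining pairs are handled by chaining as you describe) so that the $L^q$ bound is applied on a fixed compact subset. The paper's route is shorter and leans on standard machinery; yours is more elementary and has the advantage of producing an explicit, quantitative modulus of continuity.
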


\begin{proof}
Fix $0<r<1$. For any $(x,y)\in\B_1\times\B_1$ with $x\neq y$, we have that (away from the measure zero cut loci of $y$ and $x$, respectively) \[\lvert\nabla_x d_{u_k}(x,y) \rvert=e^{2u_k(x)}\text{\quad and\quad}\lvert\nabla_y d_{u_k}(x,y) \rvert=e^{2u_k(y)}.\] Here, the norms and gradients are Euclidean. So, with $1\leqslant p<\infty$ and the $L^p(\clo\B_r)$ bound on $e^{u_k}$, we obtain \[\int_{\clo\B_r\times\clo\B_r}\lvert\nabla_x d_{u_k}(x,y) \rvert^p+\lvert\nabla_y d_{u_k}(x,y) \rvert^p\mathrm{d}x\mathrm{d}y=\int_{\clo\B_r\times\clo\B_r}e^{2pu_k(x)}+e^{2pu_k(y)}\mathrm{d}x\mathrm{d}y\leqslant C(p,r).\] Thus, $\{d_{u_k}\}$ is bounded in $W^{1,p}(\clo\B_r\times\clo\B_r)$ for any $1\leqslant p<\infty$. By the Morrey-Sobolev embedding theorem this sequence is also bounded in $C^\alpha(\clo\B_r\times\clo\B_r)$ for any $\alpha=1-4/p\in(0,1)$. By the compact embedding of H\"older spaces, we see that there is a continuous function $d_\infty$ on $\clo\B_r\times\clo\B_r$ such that, upto subsequence, $d_{u_k}\to d_\infty$ in $C^\alpha(\clo\B_r\times\clo\B_r)$ for each $\alpha\in(0,1)$. By letting $r\nearrow 1$ we thus obtain pointwise convergence of a subsequence to $d_\infty$ on $\B_1\times\B_1$ which is uniform on any compact subset. 
\end{proof}

We remark that $d_\infty$ defines a \emph{semi-metric} on $\B_1$ to which the $d_{u_k}$ limit. Symmetry and the triangle inequality follow directly from the origin of $d_\infty$ as a limit of metrics, but a priori we do not know that $d_\infty$ is positive definite. We next seek to identify $d_\infty$ with $d_{\rho_c}$, the distance function associated to the round metric on $\B_1$ with constant boundary curvature $c$.

\subsection{Identifying the Local Gromov-Hausdorff Limits}

In this section we show that, for any $r<1$, the sequence of metric spaces underlying the $\left(\overline{\mathbb B}_r, e^{2u_k}g_{euc}\right)$ converges in the Gromov-Hausdorff sense to the underlying metric space of the spherical domain $\left(\overline{\mathbb B}_r,e^{2\rho_c}g_{euc}\right)$. 

Based off of the work in the previous section our candidate $GH$ limit is $(\overline{\mathbb B}_r, d_\infty)$. However, $d_\infty$ is only known to be a semi-metric at this point, so we instead prove directly that $d_\infty=d_{\rho_c}$. The following argument identifying $d_\infty$ is inspired by one in \cite{LST}. 

We'll need the following Sobolev trace-type inequality to identify the limit. The proof follows the exact same argument as in the standard case (see, for example, \cite{EG}) where the curve $\gamma$ is the boundary of a Lipschitz domain.

\begin{prop}[A Sobolev Trace Theorem] 
Let $\Omega$ be a precompact domain in $\mathbb R^2$. Let $\gamma\subset \Omega$ be a curve of finite length which as a set is Lipschitz, in the sense that it is locally the graph of a Lipschitz function over its tangent line. 

Then, for any $p\geqslant 1$, there exists a $C=C(\Omega,\gamma, p)>0$ such that: if $u\in W^{1,p}(\Omega)$, then the trace operator $T: W^{1,p}(\Omega)\to L^p(\gamma)$ is a bounded linear operator with operator norm $\|T\|\leqslant C$. 
\end{prop}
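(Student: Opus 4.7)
The plan is to mirror the standard boundary trace theorem proof (see \cite{EG}), with the Lipschitz curve $\gamma$ playing the role of $\partial\Omega$; being an interior curve (with $\mathrm{dist}(\gamma,\partial\Omega)>0$, since $\gamma$ is compact and lies in the open set $\Omega$) only makes the local geometry easier to arrange. First, by Meyers--Serrin density of $C^\infty(\clo\Omega)\cap W^{1,p}(\Omega)$ in $W^{1,p}(\Omega)$, it suffices to prove the estimate $\|u\|_{L^p(\gamma)}\leqslant C\|u\|_{W^{1,p}(\Omega)}$ for $u$ smooth up to $\clo\Omega$; the trace operator will then extend by continuity in the usual way.

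Since $\gamma$ has finite length and is locally a Lipschitz graph over its tangent line, I would cover it by finitely many open balls $B_1,\ldots,B_N$ centered on $\gamma$ such that on each $B_i$, after a rigid motion, $\gamma\cap B_i$ is the graph $\{(x,h_i(x)) : x\in I_i\}$ of a Lipschitz function $h_i\on I_i\to\R$, and the vertical slab $\{(x,y) : x\in I_i,\ |y-h_i(x)|<\delta_i\}$ sits inside $\Omega$ for some fixed $\delta_i>0$. Fix a smooth partition of unity $\{\zeta_i\}$ subordinate to $\{B_i\}$ with $\sum_i \zeta_i\equiv 1$ on $\gamma$. For each $i$, the bi-Lipschitz straightening $(x,y)\mapsto (x,y+h_i(x))$ carries $\gamma\cap B_i$ onto a horizontal line segment, with Jacobian bounded between positive constants depending only on $\mathrm{Lip}(h_i)$. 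In the straightened coordinates, I would apply the fundamental theorem of calculus in the vertical direction to $\zeta_i|u|^p$ together with Young's inequality to obtain the local estimate
\[
\int_{\gamma\cap B_i}\zeta_i|u|^p\,ds \leqslant C_i \int_{B_i\cap\Omega}\bigl(|u|^p+|\nabla u|^p\bigr)\,dx,
\]
with the arclength factor $ds=\sqrt{1+|h_i'(x)|^2}\,dx$ absorbed into $C_i$. Summing over $i$ yields the proposition with $C=\sum_i C_i=C(\Omega,\gamma,p)$.

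The main technical point requiring care is verifying that the bi-Lipschitz straightening transports $W^{1,p}$ to $W^{1,p}$ with constants depending only on $\mathrm{Lip}(h_i)$ and $p$, and that the a.e.\ chain rule is valid under this limited regularity rather than the $C^1$ regularity assumed in the textbook version; this is by now standard but is the only place where the merely Lipschitz (as opposed to smooth) nature of $\gamma$ enters. Everything else reduces to the classical one-dimensional trace inequality applied perpendicular to $\gamma$, so beyond this point the argument is routine.
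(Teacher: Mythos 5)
Your proposal is correct and takes essentially the same route the paper indicates: the paper gives no proof, simply remarking that the result ``follows the exact same argument as in the standard case'' in Evans--Gariepy, and you have spelled out that argument for an interior Lipschitz curve (finite cover by graph charts, partition of unity, bi-Lipschitz straightening, vertical fundamental theorem of calculus with Young's inequality, arclength factor bounded by the Lipschitz constant). One small imprecision worth noting: Meyers--Serrin gives density of $C^\infty(\Omega)\cap W^{1,p}(\Omega)$, not of $C^\infty(\overline{\Omega})\cap W^{1,p}(\Omega)$ (the latter requires boundary regularity of $\Omega$ that the proposition does not assume), but since $\gamma$ is a compact subset of the open set $\Omega$, interior smooth density is all you need, and your argument is unaffected.
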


We also need the following proposition, which is a version of the standard Bishop-Gromov Theorem\footnote{Technically, since we call for \emph{sectional} curvature bounds, this might be more accurately called a boundary version of the G\"unther volume comparison.} where we allow for the distance balls for both metrics in consideration to make contact with their respective convex boundaries: 

\begin{prop}
Let $\Omega\subset\R^2$ be a precompact, smooth set. Suppose $g$ and $g_c$ are two smooth Riemannian metrics on $\Omega$, with the properties that $\mathrm{sec}_g\geqslant\mathrm{sec}_{g_c}\equiv 1$, and $\kappa_{g}(\partial\Omega)\geqslant\kappa_{g_c}(\partial\Omega)\equiv c\geqslant 0$. Let $x\in\Omega$ and $s\in(0,\pi)$. Then \[\mathrm{vol}_{g}\left(\B_s(x,g)\subset\Omega\right)\leqslant\mathrm{vol}_{g_c}\left(\B_s(x,g_c)\subset\Omega\right)\]
\end{prop}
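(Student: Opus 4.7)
My approach is to prove this via the classical Günther Jacobi field comparison, refined to account for the boundary. The claim is essentially two-dimensional and rests on the Sturm comparison applied along radial geodesics from $x$, together with a boundary-convexity input.

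The plan is to set up geodesic normal polar coordinates $(r,\theta)$ based at $x$ in both metrics, identifying the unit tangent circles $S_x M \subset T_x M$ in the two metrics via a common orthonormal reference frame at $x$. In these coordinates the Riemannian volume element is $J(r,\theta)\,dr\,d\theta$, where $J$ is the Jacobian of the exponential map. In two dimensions this Jacobian satisfies the scalar Jacobi equation $J_{rr}+KJ = 0$ with $J(0,\theta)=0$, $J_r(0,\theta)=1$. Since $K_g\geq 1 = K_{g_c}$, the Sturm comparison theorem gives
\[
J_g(r,\theta) \leq J_{g_c}(r,\theta) = \sin r \qquad \text{for } r\in(0,\pi),
\]
in corresponding angular directions. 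This provides the desired pointwise interior comparison.

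Writing
\[
V_g(s) = \int_{S^1}\int_0^{\min(s,\sigma_g(\theta))} J_g(r,\theta)\,dr\,d\theta,
\]
and analogously for $V_{g_c}$, where $\sigma(\theta)$ denotes the first exit time from $\Omega$ along the radial geodesic in direction $\theta$, the residual task is to control the boundary cutoff. The key geometric input is the hypothesis $\kappa_g \geq c = \kappa_{g_c}$: since $\partial\Omega$ is \emph{more} geodesically convex in $g$ than in $g_c$, the radial geodesics of $g$ exit $\Omega$ at least as early as those of $g_c$ after the appropriate identification of directions. This can be made precise via a Riccati comparison for the shape operator $\kappa_{S_r}$ of the geodesic sphere around $x$, which satisfies $\kappa'_r + \kappa_r^2 + K = 0$ with $\kappa_r \sim 1/r$ as $r\to 0^+$, yielding $\kappa_{S_r}^g \leq \cot r = \kappa_{S_r}^{g_c}$, combined with the boundary convexity bound at exit.

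The main obstacle I anticipate is the angular identification: the unit tangent circles of $g$ and $g_c$ at $x$ are different ellipses in $T_x\Omega$, so the map $\theta\mapsto\theta$ via the reference frame is not an isometry, and the comparison of $\sigma_g(\theta)$ and $\sigma_{g_c}(\theta)$ must be set up carefully (possibly accompanied by a change of angular measure). A cleaner alternative—avoiding this identification entirely—is to apply Gauss--Bonnet directly to the region $\B_s(x,g)\cap\Omega$, which is a simply connected topological disk for $s<\pi$ by the convexity of $\partial\Omega$; one then compares the curvature integral (bounded below by $V_g(s)$ since $K_g\geq 1$) and the boundary integrals (with the geodesic-sphere part controlled by $\kappa_{S_r}\leq\cot r$ and the $\partial\Omega$ part by $\kappa_g\geq c$) term by term with the corresponding identity for the model disk $(\Omega,g_c)$. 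Either route reduces the proof to combining (i) the pointwise Jacobi comparison from $K_g\geq 1$ with (ii) a cutoff/boundary term favorably bounded via $\kappa_g\geq c$.
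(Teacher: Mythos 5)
Your first route---global normal coordinates at $x$, then a Sturm/Rauch comparison of the Jacobian of $\exp_x$---is exactly the paper's proof; the paper invokes convexity of $\partial\Omega$ to get a single normal chart covering $\Omega$ in each metric, asserts $g\leqslant g_c$ in those coordinates (the Rauch/G\"unther statement), and says integrating $\sqrt{\det g}$ and $\sqrt{\det g_c}$ ``proves the claim.'' You are right that this does not close the argument on its own. In normal coordinates the two volumes are integrals of the comparable densities over the truncated preimages $\{|v|_g<s\}\cap(\exp_x^g)^{-1}(\Omega)$ and $\{|w|_{g_c}<s\}\cap(\exp_x^{g_c})^{-1}(\Omega)$ in $T_x\Omega$, and after identifying $(T_x\Omega,g_x)$ with $(T_x\Omega,(g_c)_x)$ by a linear isometry $L$, one must still show that $L$ carries the first set into the second---that is, that the $g$-radial geodesic in direction $v$ exits $\Omega$ (in arclength, cut at $s$) no later than the $g_c$-radial geodesic in direction $Lv$. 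This inclusion is the only place the hypothesis $\kappa_g\geqslant\kappa_{g_c}$ can enter, and it is precisely the step that both you and the paper leave unargued.

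Your proposed patch via Riccati does not supply it. The bound $\kappa_{S_r}^g\leqslant\cot r=\kappa_{S_r}^{g_c}$ controls the second fundamental form of $g$-geodesic circles and is again a purely interior comparison; it says nothing about where a radial geodesic meets the fixed curve $\partial\Omega\subset\R^2$. The exit time $\sigma_g(v)$ depends on the ambient position of $\partial\Omega$ relative to two \emph{different} families of radial geodesics and on the otherwise arbitrary choice of $L$, so it is not a function of pointwise curvature data alone, and ``combined with the boundary convexity bound at exit'' is where the actual content would have to live---as written it is a gesture, not a step. The Gauss--Bonnet alternative has the same missing comparison in a different guise: the boundary of $\B_s(x,g)$ splits into a geodesic-circle arc, an arc of $\partial\Omega$, and corners, and matching these term by term with the $g_c$ model again requires knowing where the geodesic circle meets $\partial\Omega$ in each metric, plus a treatment of cut locus. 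So your diagnosis of the crux is accurate---and, notably, sharper than the paper's own write-up, which does not even flag the cut-off issue---but neither of your sketches yet produces the needed domain inclusion, and that is where a complete proof of the proposition would have to do real work.
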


\begin{proof}
By convexity of the boundaries, $\Omega$ can be covered by a global normal coordinate chart in each metric, and the volume comparison follows from the standard proof. More specifically, the assumption on curvatures tells us that $g\leqslant g_c$ on $\Omega$, and expressing the volumes as integrals of $\sqrt{\det g}$ and $\sqrt{\det g_c}$ in the normal coordinates proves the claim. See for example \cite{Lee} Chapter 11.
\end{proof}

Now we are ready to prove that $d_\infty=d_{\rho_c}$. 

\begin{clm}[$d_\infty\leqslant d_{\rho_c}$]
\end{clm}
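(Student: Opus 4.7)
The plan is to produce, for each pair of points $x,y\in\B_1$, a competitor curve $\gamma$ whose $u_k$-length converges to $d_{\rho_c}(x,y)$; since $d_{u_k}(x,y)\leq L_{u_k}(\gamma)$, this will force $d_\infty(x,y)\leq d_{\rho_c}(x,y)$ after passing to the limit via Proposition \ref{distancelim}. The natural choice of $\gamma$ is the unique $\rho_c$-minimizing geodesic from $x$ to $y$. Since $(\B_1,e^{2\rho_c}g_{euc})$ is isometric to a strictly $c$-convex geodesic disk in $\mathbb S^2$ of radius $\inv\cot(c)<\pi/2$, this geodesic is a smooth arc lying in a compact subset $K\Subset\B_1$.

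The core computation is
\[
d_{u_k}(x,y)\;\leq\;L_{u_k}(\gamma)\;=\;\int_\gamma e^{v_k}e^{w_k}\,ds_{euc}.
\]
Decomposing $e^{v_k}e^{w_k}=e^{\rho_c}+(e^{v_k}-e^{\rho_c})e^{w_k}+(e^{w_k}-1)e^{\rho_c}$, the first piece integrates exactly to $L_{\rho_c}(\gamma)=d_{\rho_c}(x,y)$. The second is estimated by $\|e^{v_k}-e^{\rho_c}\|_{L^\infty(K)}\cdot\|e^{w_k}\|_{L^1(\gamma)}$: the first factor tends to zero by the $C^m_{loc}$ convergence $v_k\to\rho_c$ established in Section \ref{section v}, while the Sobolev Trace Theorem on a smooth neighborhood $\Omega\Subset\B_1$ containing $\gamma$ bounds $\|e^{w_k}\|_{L^p(\gamma)}\leq C\|e^{w_k}\|_{W^{1,p}(\Omega)}$, which is bounded by $4\pi+\Psi$ via Lemma \ref{exp w sobolev}. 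The third is estimated by $\|e^{\rho_c}\|_{L^\infty(K)}\cdot\|e^{w_k}-1\|_{L^1(\gamma)}$: the first factor is finite by smoothness of $\rho_c$ on compacta, and the Sobolev Trace Theorem gives $\|e^{w_k}-1\|_{L^p(\gamma)}\leq C\|e^{w_k}-1\|_{W^{1,p}(\Omega)}\to 0$ by Proposition \ref{w}.

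The main subtlety is the low regularity available for $w_k$: we only control $e^{w_k}-1$ in $W^{1,p}$ for $p\in[1,2)$, so uniform or pointwise smallness of $e^{w_k}-1$ along the geodesic is not available. The Sobolev Trace Theorem recalled just above is precisely the tool that converts interior $W^{1,p}$-smallness into $L^p$-smallness on the one-dimensional smooth curve $\gamma$, and once on $\gamma$ these integrate harmlessly against the smooth, uniformly bounded factors $e^{\rho_c}$ and $e^{v_k}-e^{\rho_c}$.

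Combining these three bounds yields $L_{u_k}(\gamma)\to d_{\rho_c}(x,y)$, and consequently $d_\infty(x,y)=\lim_k d_{u_k}(x,y)\leq d_{\rho_c}(x,y)$, establishing the claim.
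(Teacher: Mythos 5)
Your proof is correct and follows essentially the same route as the paper: both take the $\rho_c$-geodesic as a competitor curve and use the Sobolev Trace Theorem to convert interior $W^{1,p}$ control to $L^p$ control along $\gamma$, with your three-term decomposition of $e^{u_k}-e^{\rho_c}$ being exactly the one the paper establishes at the start of Section \ref{section X} (with $\lambda=1$). The only cosmetic difference is that the paper cites the already-established $W^{1,p}_{loc}$ convergence of $e^{u_k}\to e^{\rho_c}$ and applies the trace theorem once, while you unpack that convergence term-by-term inside the claim's proof; the underlying lemmas and estimates are identical.
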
  
\begin{proof}
Fix any $x,y\in\B_1$ and $r<1$ so that $x,y\in\B_r$. Let $\gamma$ be the $\rho_c$ geodesic in $\B_1$ from $x$ to $y$, which by convexity lives inside of $\B_r$. Since each $\rho_c$ geodesic in $\B_1$ is an admissible curve in the above Trace Theorem, \[e^{u_k}\to e^{\rho_c}\text{\quad in\quad} W^{1,p}(\clo \B_r) \text{\quad for all\quad} p\in [1,2)\] implies that \[e^{u_k}\to e^{\rho_c}\text{\quad in\quad} L^p(\gamma) \text{\quad for all\quad} p\in [1,2).\] Therefore, \[d_{\rho_c}(x,y)=\int_\gamma e^{\rho_c}=\lim\int_{\gamma}e^{u_k}=\lim\mathrm{L}_{u_k}(\gamma)\geqslant\liminf d_{u_k}(x,y)=d_\infty(x,y). \]
\end{proof}
\begin{clm}[$d_\infty\geqslant d_{\rho_c}$]
\end{clm}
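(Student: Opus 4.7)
The plan is to argue by contradiction: suppose there exist $x_0, y_0 \in \B_1$ with $d_\infty(x_0, y_0) < d_{\rho_c}(x_0, y_0)$, noting in particular that $d_{\rho_c}(x_0,y_0) \leq \mathrm{diam}_{\rho_c}(\B_1) = 2\inv\cot(c) < \pi$, and derive a volume inequality inconsistent with the Bishop-Gromov comparison. Pick an intermediate value $s$ with $d_\infty(x_0, y_0) < s < d_{\rho_c}(x_0, y_0)$ (so $s \in (0,\pi)$). By continuity of both $d_\infty$ and $d_{\rho_c}$, we can find $\delta > 0$ and a closed Euclidean ball $\bar U = \clo\B_\varepsilon(y_0, g_{euc}) \subset \B_1$ such that $d_\infty(x_0,\cdot) < s-\delta$ and $d_{\rho_c}(x_0,\cdot) > s+\delta$ on $\bar U$. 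In particular $\bar U$ is disjoint from $F_{s-\delta} := \{z : d_{\rho_c}(x_0,z) \leq s - \delta\}$.

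For the upper bound, the Bishop-Gromov proposition applied with $\Omega = \B_1$, $g = e^{2u_k}g_{euc}$, and $g_c = e^{2\rho_c}g_{euc}$ (whose curvature and boundary-curvature hypotheses hold by the standing assumptions $K_{u_k} \geq 1$, $\kappa_{u_k}\geq c$ and by construction of $\rho_c$) yields
\[\mathrm{vol}_{u_k}(\B_s(x_0, u_k)) \;\leq\; \mathrm{vol}_{\rho_c}(\B_s(x_0, \rho_c)) \;=\; \mathrm{vol}_{\rho_c}(F_s).\]

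For the matching lower bound, fix $r \in (0,1)$ large enough that $x_0$ and $\bar U$ lie in $\B_r$, and consider the compact set $A_{r,\delta} := (F_{s-\delta} \cap \clo\B_r) \sqcup \bar U \subset \clo\B_r$. By the first claim ($d_\infty \leq d_{\rho_c}$) and our choice of $\bar U$, we have $d_\infty(x_0, \cdot) \leq s - \delta$ on $A_{r,\delta}$; hence by the $C^0_{loc}$-convergence $d_{u_k}\to d_\infty$ on $\clo\B_r\times\clo\B_r$, the containment $A_{r,\delta} \subset \B_s(x_0,u_k)$ holds for all sufficiently large $k$. Then by the $L^1_{loc}$-convergence $e^{2u_k} \to e^{2\rho_c}$ inherited from the $W^{1,p}_{loc}$ estimates of Sections \ref{section w}-\ref{section v},
\[\liminf_{k\to\infty} \mathrm{vol}_{u_k}(\B_s(x_0, u_k)) \;\geq\; \lim_{k\to\infty} \int_{A_{r,\delta}} e^{2u_k} \;=\; \mathrm{vol}_{\rho_c}(F_{s-\delta}\cap\clo\B_r) + \mathrm{vol}_{\rho_c}(\bar U).\]
Letting $r \nearrow 1$ (using that $\mathbb S^1$ has zero 2D Lebesgue measure) and then $\delta \searrow 0$ (using that the smooth $\rho_c$-sphere $\{d_{\rho_c}(x_0,\cdot) = s\}$ is 1-dimensional, hence has zero 2D measure) upgrades this to
\[\liminf_{k\to\infty} \mathrm{vol}_{u_k}(\B_s(x_0, u_k)) \;\geq\; \mathrm{vol}_{\rho_c}(F_s) + \mathrm{vol}_{\rho_c}(\bar U).\]

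Combining the two bounds forces $\mathrm{vol}_{\rho_c}(\bar U) \leq 0$, contradicting the positivity of $e^{2\rho_c}$ on the open set $U$. The main obstacle is the lower bound: the Bishop-Gromov inequality only controls the ball volume from above, so producing strictly more mass inside $\B_s(x_0,u_k)$ than $\mathrm{vol}_{\rho_c}(F_s)$ requires carefully choreographing the three nested limits $k\to\infty$, $r\to 1$, $\delta\to 0$, and crucially exploiting the \emph{disjoint} Euclidean neighborhood $\bar U$ (whose existence is where the strict inequality $d_\infty(x_0,y_0) < d_{\rho_c}(x_0,y_0)$ is used) to give the extra $\mathrm{vol}_{\rho_c}(\bar U) > 0$ of mass that breaks the comparison.
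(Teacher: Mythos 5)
Your proof is correct and follows essentially the same approach as the paper: assume a strict inequality $d_\infty(x_0,y_0)<d_{\rho_c}(x_0,y_0)$, bound $\mathrm{vol}_{u_k}(\B_s(x_0,u_k))$ from above by the boundary Bishop--Gromov comparison and from below via $L^1_{loc}$-convergence of $e^{2u_k}$ together with locally uniform convergence of $d_{u_k}$, then choreograph the $k\to\infty$, $r\nearrow 1$, and $\delta\searrow 0$ limits to force a contradiction. The only cosmetic difference is that you isolate the surplus mass in an explicit disjoint Euclidean ball $\bar U$ around $y_0$, whereas the paper encodes the same surplus as a strict inequality $\mathrm{vol}_{\rho_c}(\B_R(x,d_{\rho_c}))<\mathrm{vol}_{\rho_c}(\B_R(x,d_\infty))$.
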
  
\begin{proof}
For the sake of contradiction, suppose that there were to exist some $x,y\in\B_r\subset\B_1$ with \[R_\infty\defeq d_{\infty}(x,y)<R\defeq d_{\rho_c}(x,y).\] Since $y\in\B_R(x,d_\infty)$ but $y\notin\B_R(x,d_{\rho_c})$, by continuity of $d_\infty$ we have that \[\mathrm{vol}_{\rho_c}\left(\B_R(x,d_{\rho_c})\right)<\mathrm{vol}_{\rho_c}\left(\B_R(x,d_\infty)\right).\] Next, choose an $\eta>0$ small and recall that $d_{u_k}\to d_\infty$ uniformly on $\B_r$. For all large enough $k$ we then have that \[\clo\B_r\cap\B_{R-\eta}(x, d_\infty)\subset\B_R(x,d_{u_k}).\] Putting these two facts together with convergence of $e^{2u_k}$ to $e^{2\rho_c}$ in $L^1(\clo\B_r)$ and applying our version of Bishop-Gromov with boundaries, we obtain 
\begin{align*}
\mathrm{vol}_{\rho_c}\left(\clo\B_r\cap\B_{R-\eta}(x,d_{\infty})\right)&=\lim \mathrm{vol}_{u_k}\left(\clo\B_r\cap\B_{R-\eta}(x,d_\infty)\right)\\
&\leqslant\liminf \mathrm{vol}_{u_k}\left(\B_R(x,d_{u_k})\right)  \\
&\leqslant \mathrm{vol}_{\rho_c}\left(\B_R(x,d_{\rho_c})\right).
\end{align*} We now send $r\nearrow 1$ and $\eta\searrow 0$ (equivalently, take the union of the sets $\clo\B_{r_j}\cap\B_{R-\eta_j}(x,d_{\infty})$ for a sequence $r_j\nearrow 1$ and $\eta_j\searrow 0$) to obtain a contradiction with our first strict volume estimate above. 
\end{proof}
Therefore, we have identified the locally uniform limit $d_\infty$ as the bonafide distance function $d_{\rho_c}$ deriving from the round metric.

\subsection{Identifying the Global Gromov-Hausdorff Limit}\label{GH}

We are now ready to show that $(\overline{\mathbb B}_1, d_{u_k})\to(\overline{\mathbb B}_1, d_{\rho_c})$ in the GH sense. By Gromov's Compactness Theorem, the $(\overline{\mathbb B}_1,  d_{u_k})$ subconverge to some metric space $(X,d_X)$. It is well known that $(X,d_X)\in\mathrm{Alex}^{\leqslant 2}(1)$ is an Alexandrov space of $\mathrm{curv}\geqslant 1$ and dimension not exceeding $2$. By volume convergence \ref{vol conv}, $(X,d_X)\in\mathrm{Alex}^2(1)$ is non-collapsed and so by the Perelman Stability Theorem, for all large enough $k\geqslant 1$, there exist homeomorphisms $\phi_k\on(\overline{\mathbb B}_1, d_{u_k})\to (X, d_X)$ with $\mathrm{dis}(\phi_k)\to 0$ (see \cite{Kap}). In particular, $(X, d_X)$ has a two-dimensional topological manifold structure.  

Without loss of generality, then, we may assume that we have a metric $d$ on $\overline{\mathbb B}_1$ such that $(\overline{\mathbb B}_1, d)\in\mathrm{Alex}^2(1)$, and that for each $k\geqslant 1$ there is a homeomorphism \[\phi_k\on(\overline{\mathbb B}_1, d_{u_k})\to (\overline {\mathbb B}_1, d)\] with diminishing distortion \[\mathrm{dis}(\phi_k)=\sup_{x,y\in\overline{\mathbb B}_1}\left\vert \phi^*_k d(x,y)-d_{u_k}(x,y)\right\vert=\sup_{x,y\in\overline{\mathbb B}_1}\left\vert d(\phi_k(x),\phi_k(y))-d_{u_k}(x,y)\right\vert\to 0.\] Using the local uniform convergence of the $d_{u_k}$ to $d_{\rho_c}$ from the previous section, we can identify $d$ as $d_{\rho_c}$ up to some choice of gauge. 

Our approach is based on a diagonal application of the Arzel\'a-Ascoli Theorem to the homeomorphisms $\phi_k$. To do this, we must first restrict to a fixed compact subdomain. Let $r<1$, and set \[\phi_k^r:=\phi_k\vert_{\overline{\mathbb B}_r}\on(\overline{\mathbb B}_r, d_{\rho_c})\to (\overline {\mathbb B}_1, d).\] Note that the maps $\phi_k^r$ are simply the set-theoretic restrictions of the $\phi_k$ to $\overline{\mathbb B}_r$, and that the distance function on $\overline{\mathbb B}_r$ is now fixed to be $d_{\rho_c}$. Of course, because $d_{u_k}$ and $d_{\rho_c}$ are both derived from smooth conformal deformations of $d_{euc}$, the $\phi_k^r$ are all homeomorphisms of $(\overline{\mathbb B}_r, d_{\rho_c})$ with their respective images in $(\overline {\mathbb B}_1, d)$. Because we have that $d_{u_k}\to d_{\rho_c}$ uniformly on $\overline{\mathbb B}_r$, passing to the fixed background metric $d_{\rho_c}$ does not damage the distortions of the $\phi_k^r$: \[\begin{split}  \mathrm{dis}(\phi_k^r)&=\sup_{x,y\in\overline{\mathbb B}_r}\left\vert (\phi^r_k)^*d(x,y)-d_{\rho_c}(x,y)\right\vert =\sup_{x,y\in\overline{\mathbb B}_r}\left\vert d(\phi_k(x),\phi_k(y))-d_{\rho_c}(x,y)\right\vert\\&\leqslant\sup_{x,y\in\overline{\mathbb B}_r}\left\vert d(\phi_k(x),\phi_k(y))-d_{u_k}(x,y)\right\vert+\sup_{x,y\in\overline{\mathbb B}_r}\left\vert d_{u_k}(x,y)-d_{\rho_c}(x,y)\right\vert \\ &\leqslant\mathrm{dis}(\phi_k)+\|d_{u_k}-d_{\rho_c}\|_{L^\infty(\overline{\mathbb B}_r\times \overline{\mathbb B}_r)}\to 0 \text{\quad as\quad} k\to\infty.\end{split}\]

We want to apply the Arzel\'a-Ascoli Theorem to the collection \[\{\phi^r_k\}_{k=1}^\infty\subset\mathcal C^0((\overline{\mathbb B}_r, d_{\rho_c}),(\overline {\mathbb B}_1, d)),\] with the latter space endowed with the topology of uniform convergence, to obtain a subsequence of the $\phi_k$ which converge uniformly on $\overline{\mathbb B}_r$ to an isometric embedding of $(\overline{\mathbb B}_r, d_{\rho_c})$ into  $(\overline {\mathbb B}_1, d)$. Doing this for a sequence $r_i\nearrow 1$ and taking a diagonal subsequence will then give us an isometric embedding of $(\mathbb B_1, d_{\rho_c})$ into $(\overline {\mathbb B}_1, d)$. 

Fix an $r_i<1$ in a sequence $r_i\nearrow 1$. To apply Arzel\'a-Ascoli, we must check that the collection $\{\phi^{r_i}_k\}_{k=1}^\infty$ is equicontinuous and pointwise pre-compact. The latter condition is easily satisfied, since the target space is a compact space. For equicontinuity, let $\varepsilon>0$ be arbitrary, and find $N\in\mathbb Z^{\geqslant 1}$ so large that for all $k\geqslant N$, \[\mathrm{dis}(\phi_k^{r_i})=\sup_{x,y\in\overline{\mathbb B}_{r_i}}\left\vert (\phi^{r_i}_k)^*d(x,y)-d_{\rho_c}(x,y)\right\vert\leqslant \varepsilon/2.\] Thus, for all $k\geqslant N$, and all $x,y\in\overline{\mathbb B}_{r_i}$, \[d(\phi^{r_i}_k(x), \phi^{r_i}_k(y))\leqslant d_{\rho_c}(x,y)+\varepsilon/2.\] Consequently, if we choose $0<\delta=\delta(\varepsilon, r_i)\leqslant\varepsilon/2$ so small that for $k=1,\ldots, N-1$ and $x,y\in\overline{\mathbb B}_{r_i}$ with $d_{\rho_c}(x,y)\leqslant\delta$ we also have \[d(\phi^{r_i}_k(x), \phi^{r_i}_k(y))\leqslant \varepsilon,\] then indeed for every $k\geqslant 1$ and $x,y\in\overline{\mathbb B}_{r_i}$ with $d_{p_c}(x,y)\leqslant\delta$ we have \[d(\phi^{r_i}_k(x), \phi^{r_i}_k(y))\leqslant \varepsilon.\] Thus the family $\{\phi^{r_i}_k\}_{k=1}^\infty$ is (uniformly) equicontinuous and pointwise pre-compact, so we may apply the Arzel\'a-Ascoli Theorem to obtain a (not-relabeled) subsequence of the $\phi_k$ so that their restrictions $\phi^{r_i}_k$ to $\overline{\mathbb B}_{r_i}$ with the $d_{\rho_c}$ distance converge uniformly to a continuous map \[\phi^{r_i}:(\overline{\mathbb B}_{r_i}, d_{\rho_c}) \to(\overline {\mathbb B}_1, d).\] We also observe that for any $k\geqslant 1$, \[\begin{split}  \mathrm{dis}(\phi^{r_i})&=\sup_{x,y\in\overline{\mathbb B}_{r_i}}\left\vert (\phi^{r_i})^*d(x,y)-d_{\rho_c}(x,y)\right\vert =\sup_{x,y\in\overline{\mathbb B}_{r_i}}\left\vert d(\phi^{r_i}(x),\phi^{r_i}(y))-d_{\rho_c}(x,y)\right\vert\\&\leqslant\sup_{x,y\in\overline{\mathbb B}_{r_i}}\left\vert d(\phi^{r_i}(x),\phi^{r_i}(y))-d(\phi^{r_i}_k(x),\phi^{r_i}_k(y))\right\vert \\ &\qquad\qquad\qquad\qquad+\sup_{x,y\in\overline{\mathbb B}_{r_i}}\left\vert d(\phi^{r_i}_k(x),\phi^{r_i}_k(y))-d_{\rho_c}(x,y)\right\vert \\ &\leqslant \|(\phi^{r_i}_k)^*d-(\phi^{r_i})^*d\|_{L^\infty(\overline{\mathbb B}_r\times \overline{\mathbb B}_r)}+\mathrm{dis}(\phi_k^{r_i})\to 0 \text{\quad as\quad} k\to\infty.\end{split}.\] In the last line, we used the uniform convergence of the $\phi^{r_i}_k$ to $\phi^{r_i}$ and the estimate 
\[\begin{split} 
\left\vert d(\phi^{r_i}_k(x), \phi^{r_i}_k(y))-d(\phi^{r_i}(x), \phi^{r_i}(y))\right\vert 
&\leqslant \left\vert d(\phi^{r_i}_k(x), \phi^{r_i}_k(y))-d(\phi^{r_i}_k(x), \phi^{r_i}(y))\right\vert \\ 
&\qquad +\left\vert d(\phi^{r_i}_k(x), \phi^{r_i}(y))-d(\phi^{r_i}(x), \phi^{r_i}(y))\right\vert \\
&\leqslant d(\phi^{r_i}_k(y), \phi^{r_i}(y))+d(\phi^{r_i}_k(x), \phi^{r_i}(x)).\end{split}\] Thus, $\phi^{r_i}$ is a continuous map of vanishing distortion, and therefore gives an isometric embedding of $(\overline{\mathbb B}_{r_i}, d_{\rho_c})$ into  $(\overline {\mathbb B}_1, d)$. 

We apply this process in subsequence to each $r_i$, and take a diagonal subsequence. This yields a sequence of domains $\overline{\mathbb B}_{r_i}\nearrow \mathbb B_1$ and isometric embeddings $\phi^{r_i}:(\overline{\mathbb B}_{r_i}, d_{\rho_c})\to(\overline{\mathbb B}_1, d)$ with $\phi^{r_{i+1}}=\phi^{r_i}$ on $\overline{\mathbb B}_{r_i}$. The canonically induced map $\phi\on(\mathbb B_1, d_{\rho_c})\to (\overline{\mathbb B}_1, d)$ is thus a homeomorphism of vanishing distortion onto its image. That is, we have an isometric embedding of $(\mathbb B_1, d_{\rho_c})$ into $(\overline{\mathbb B}_1, d)$. By continuity, and $\mathrm{dis}(\phi)=0$, $\phi$ extends uniquely to a well defined isometric embedding \[\phi\on(\overline{\mathbb B}_1, d_{\rho_c})\to (\overline{\mathbb B}_1, d).\] 

All that remains to be shown is that $\phi$ is onto. By volume stability \ref{vol conv} and the fact that the Hausdorff measures of $(\overline{\mathbb B}_1, d_{u_k})$ weakly converge to the Hausdorff measure of $(\overline{\mathbb B}_1, d)$ (see \cite{BBI} Theorem 10.10.10), the (open) complement of $\phi(\overline{\mathbb B}_1)$ in $(\overline{\mathbb B}_1, d)$ must have measure zero, and thus must be empty. Indeed, by the topological regularity of $(\overline{\mathbb B}_1, d)$ any open subset must contain a $d$-metric ball completely in the interior of $(\overline{\mathbb B}_1, d)$, which itself lies in the domain of a bi-Lipschitz coordinate chart mapping to $\mathbb R^2$ (see \cite{BBI} Theorem 10.8.18 and the following remark). Therefore, the $\mathcal H^2_d$ measure of any non-empty open subset of $(\overline{\mathbb B}_1, d)$ is non-zero. We therefore conclude that $\phi$ is an isometry between the GH limit $(\overline{\mathbb B}_1, d)$ of our original sequence $\{(\overline{\mathbb B}_1, d_{u_k})\}$ and the model disk $(\overline{\mathbb B}_1, d_{\rho_c})$.

\subsection{Proof of the Main Theorem}

To summarize, we supposed that there were a $\delta_0>0$ such that, for every sequence $\varepsilon_k\searrow 0$, we could find an $(M_k, g_k)$ as in the statement with $L_{k}(\partial M_k)\geqslant 2\pi(1-\varepsilon_k)/\sqrt{1+c^2}$ but \[d_{GH}((M_k,g_k),\clo{\mathbf{B}}_{\inv\cot(c)})\geqslant\delta_0>0.\] As shown in the previous sections, any such sequence of disks will subconverge to $\clo{\mathbf{B}}_{\inv\cot(c)}$, contradicting the existence of $\delta_0$. Hence, the main result follows.

\section{Acknowledgments}

The author would like to thank Renato Bettiol and their adviser Davi M\'aximo for being calming voices when an error in an early draft was found, as well as the Fields Institute for its hospitality during the Thematic Program on Nonsmooth Riemannian and Lorentzian Geometry. Much of the final work was completed there during the author's visit. 

\section{Data Availability Statement}
Data sharing not applicable to this article as no datasets were generated or analysed during the current study.


\bibliographystyle{alpha}
\bibliography{Stab_Conv_Disks_v5_arxiv.bib}
\nopagebreak
\end{document}